\def\red{\color{red}}
\def\rr{{\mathbb R}}
\def\rn{{\mathbb{R}^n}}
\def\zz{{\mathbb Z}}
\def\cc{{\mathbb C}}
\def\nn{{\mathbb N}}
\def\ca{{\mathcal A}}
\def\cf{{\mathcal F}}
\def\cm{{\mathcal M}}
\def\cs{{\mathcal S}}
\def\fz{\infty }
\def\lz{\lambda}
\def\lf{\left}
\def\r{\right}
\def\la{\langle}
\def\ra{\rangle}
\def\hs{\hspace{0.25cm}}
\def\ls{\lesssim}
\def\wh{\widehat}
\def\loc{{\mathop\mathrm{\,loc\,}}}
\def\supp{\mathop\mathrm{\,supp\,}}
\def\XXint#1#2#3{{\setbox0=\hbox{$#1{#2#3}{\int}$ }
\vcenter{\hbox{$#2#3$ }}\kern-.6\wd0}}
\DeclareMathOperator{\esssup}{ess\,sup}
\DeclareMathOperator{\essinf}{ess\,inf}
 \def\la{{\langle}}
 \def\ra{{\rangle}}
\def\({\left(}
\def \){ \right)}
\def\lz{{\lambda}}
 \def\BB{{\mathbb B}}
 \def\supp{\operatorname{supp}}
\newtheorem{theorem}{Theorem}[section]
\newtheorem{lemma}[theorem]{Lemma}
\newtheorem{assumption}[theorem]{Assumption}
\theoremstyle{definition}
\newtheorem{remark}[theorem]{Remark}
\newtheorem{definition}[theorem]{Definition}
\renewcommand{\appendix}{\par
   \setcounter{section}{0}%
   \setcounter{subsection}{0}%
   \setcounter{subsubsection}{0}%
   \gdef\thesection{\@Alph\c@section}%
   \gdef\thesubsection{\@Alph\c@section.\@arabic\c@subsection}%
   \gdef\theHsection{\@Alph\c@section.}%
   \gdef\theHsubsection{\@Alph\c@section.\@arabic\c@subsection}%
   \csname appendixmore\endcsname
 }
\numberwithin{equation}{section}
\begin{document}

\title{\bf\Large Littlewood-Paley Characterizations of
Hardy-type Spaces Associated with Ball Quasi-Banach Function Spaces\\
\vspace{0.5cm}
\small In Memory of Professor Carlos Berenstein
\footnotetext{\hspace{-0.35cm} 2010 {\it
Mathematics Subject Classification}. Primary 42B25;
Secondary 42B30, 42B35, 46E30.
\endgraf {\it Key words and phrases.} ball quasi-Banach function space, Hardy space,
Littlewood-Paley function, extrapolation, tent space, maximal function.
\endgraf This project is supported
by the National Natural Science Foundation of China (Grant Nos.
11971058, 11761131002 and 11671185).
Der-Chen Chang is partially supported by an NSF grant DMS-1408839 and a McDevitt Endowment
Fund at Georgetown University.}}
\author{Der-Chen Chang, Songbai Wang, Dachun Yang\,\footnote{Corresponding 
author/{\red November 9, 2019}/Final version.}
\ and Yangyang Zhang\,$^{\ast}$}
\date{}
\maketitle

\vspace{-0.7cm}

\begin{center}
\begin{minipage}{13cm}
{\small {\bf Abstract}\quad Let $X$ be a ball quasi-Banach function space on ${\mathbb R}^n$.
In this article,
assuming that the powered Hardy--Littlewood maximal operator
satisfies some Fefferman--Stein vector-valued maximal inequality on $X$ and is
bounded on the associated space,
the authors establish various Littlewood--Paley
function characterizations of the Hardy space $H_X({\mathbb R}^n)$
associated with $X$, under some weak assumptions on the Littlewood--Paley
functions. To this end, the authors also establish a useful estimate on
the change of angles in tent spaces associated with $X$.
All these results have wide applications. Particularly,
when $X:=M_r^p({\mathbb R}^n)$ (the Morrey space),
$X:=L^{\vec{p}}({\mathbb R}^n)$ (the mixed-norm Lebesgue space),
$X:=L^{p(\cdot)}({\mathbb R}^n)$ (the variable Lebesgue space),
$X:=L_\omega^p({\mathbb R}^n)$ (the weighted Lebesgue space)
and $X:=(E_\Phi^r)_t({\mathbb R}^n)$ (the Orlicz-slice space),
the Littlewood--Paley function characterizations of $H_X({\mathbb R}^n)$ obtained in this
article improve the existing results via weakening the assumptions on the Littlewood--Paley
functions and  widening
the range of $\lambda$ in
the Littlewood--Paley $g_\lambda^*$-function characterization of $H_X(\mathbb R^n)$.
}
\end{minipage}
\end{center}


\vspace{0.2cm}

\section{Introduction\label{s1}}
The real-variable theory of the classical Hardy space $H^p(\rn)$ with $p\in (0,1]$ was originally
initiated
by Stein and Weiss \cite{SW} and further developed by Fefferman and Stein \cite{FS}.
It is well known that the classical Hardy space $H^p(\rn)$ with $p\in (0,1]$ plays
a key role in harmonic analysis, partial differential equations
and other analysis subjects.
In particular, when $p\in(0,1]$,
$H^p(\rn)$ is a good substitute of the Lebesgue space $L^p(\rn)$
in the study on the boundedness of Calder\'on--Zygmund operators.
In recent decades, in order to meet the requirements arising in the study on the boundedness of operators,
partial differential equations and some other analysis subjects, various variants of Hardy spaces
have been introduced and
their real-variable theories have been well developed; these variants of Hardy spaces
were built on some elementary function spaces such as
weighted Lebesgue
spaces (see \cite{ST}), (weighted) Herz spaces (see, for instance,
\cite{CL,GC1,GCH,LY1,LY2}),  (weighted) Morrey spaces (see, for instance
\cite{JW,Sa,H}), Orlicz spaces (see, for instance, \cite{IV,Se,V,NS1,YY}),
Lorentz spaces (see, for instance, \cite{AST}), Musielak--Orlicz spaces
(see, for instance, \cite{K,YLK}) and variable function spaces (see, for instance, \cite{CUW,NS, YZN}).
Observe that these aforementioned elementary function spaces
are all included in a generalized framework called ball quasi-Banach function spaces
which were introduced, very recently, by Sawano et al. \cite{SHYY}.
Moreover, Sawano et al. \cite{SHYY}, Wang et al. \cite{WYY} and Zhang et al \cite{WYYZ,ZWYY}
established a unified real-variable theory
for Hardy spaces and weak Hardy spaces associated with ball quasi-Banach function spaces on ${\mathbb R}^n$
and gave some applications of these Hardy-type spaces to the boundedness of
Calder\'on--Zygmund operators and pseudo-differential operators.
More
function spaces based on ball quasi-Banach function spaces can be found in Sawano \cite{s}.

Recall that the original work of the Littlewood--Paley theory should be owned to Littlewood
and Paley \cite{LP}. Moreover, the Littlewood--Paley theory of Hardy spaces was further developed
by
Calder\'on \cite{C} and Fefferman and Stein \cite{FS}.
In recent decades, the Littlewood--Paley theory of
various variants of Hardy spaces has been well developed;
see, for instances, \cite{FoS,HYY,HLYY,lfy,LHY,lyy18,WYYZ,YYYZ,ZYYW,ZSY} and the related references.
Particularly, Folland and Stein \cite{FoS} obtained the Littlewood--Paley function
characterizations of Hardy spaces on homogeneous groups.
Observe that, in the case of $H^p(\rn)$ with $p\in(0,\infty)$, the best known range of
the parameter $\lambda$ in the $g_\lambda^\ast$-function characterization of Hardy spaces in \cite{FoS}
is $(\max\{1,2/p\},\infty)$ and
the function $\varphi$ appearing in the definitions of the Littlewood--Paley functions in \cite{FoS}
only need to
satisfy zero order vanishing moment.
However, compared with
the results in \cite{FoS} on the Littlewood--Paley function characterizations in the case of $H^p(\rn)$ with $p\in(0,\infty)$,
the range of $\lambda$ in the $g_\lambda^\ast$-function characterization appearing in \cite{HLYY,ZYYW,WYY}
does not coincide with the range in \cite{FoS}, namely, $\lambda\in(\max\{1,2/p\},\infty)$, and, in \cite{HYY,HLYY,lfy,LHY,lyy18,YYYZ}, the function $\varphi\in\cs(\rn)$ appearing in the
definitions of Littlewood--Paley functions is supposed to be
supported in the unit ball and have at least vanishing moments up to order
$\lfloor n(\frac1p-1)\rfloor$, which is much stronger than the corresponding assumptions on $\varphi$ appearing in the
definitions of Littlewood--Paley functions in \cite{FoS}. Here and thereafter,
the symbol $\lfloor s\rfloor$  for any $s\in\mathbb{R}$
denotes the largest integer not greater
than $s$.

Let $X$ be a ball quasi-Banach function space on $\rn$ introduced in \cite{SHYY}.
Assuming that the powered Hardy--Littlewood maximal operator
satisfies some Fefferman--Stein vector-valued maximal inequality on $X$ as well as it is
bounded on the associated space,
Sawano et al. \cite{SHYY}  established the Lusin area function characterization
of $H_X(\rn)$. Recently, Wang et al. \cite{WYY} obtained the Littlewood--Paley $g$ function
and the Littlewood--Paley $g_\lambda^*$-function characterizations of $H_X(\rn)$. We should point out
that, to characterize $H_X(\rn)$, Sawano et al. \cite{SHYY} and Wang et al. \cite{WYY}
required that the function $\varphi\in\cs(\rn)$ appearing in the definitions of Littlewood--Paley functions
satisfies $
\mathbf1_{B(\vec 0_n,4)\setminus B(\vec0_n,2)}\le\widehat\varphi\le
\mathbf1_{B(\vec 0_n,8)\setminus B(\vec0_n,1)}$.
Although, when $p\in(0,1]$ and $X:=L^p(\rn)$, the range of $\lambda$ in
the Littlewood--Paley $g_\lambda^*$-function
characterization of $H_X(\rn)$ obtained by Wang et al. \cite{WYY} coincides with
the best
known one in \cite{FoS}, namely, $\lambda\in(2/p,\infty)$, the range of $\lambda$
in \cite{WYY}
is not optimal even when $p\in(1,\infty)$ and $X:=L^p(\rn)$.
This point motivates us to optimize the range of the parameter $\lambda$
appearing in the Littlewood--Paley $g_\lambda^*$-function
characterization of $H_X(\rn)$ in \cite{WYY}.
Recently, when studying the Littlewood--Paley function characterizations of weak Hardy type spaces
associated with ball quasi-Banach function spaces,
Wang et al. \cite{WYYZ} found that it suffices to require that the function $\varphi\in\cs(\rn)$
satisfies that $\widehat\varphi(\vec0_n)=0$ and, for any $x\in\rn\setminus\{\vec 0_n\}$,
there exists a $t\in(0,\infty)$ such that $\widehat\varphi(tx)\not=0$. This motivates us to improve
the existing results of the Littlewood--Paley function characterizations of $H_X(\rn)$
in \cite{SHYY,WYY} by weakening the assumption on $\varphi$.
In this article,
we re-establish various Littlewood--Paley
function characterizations of $H_X(\rn)$ via weakening the assumptions on the Littlewood--Paley
functions and widening the range of $\lambda$ in the Littlewood--Paley $g_\lambda^*$-function
characterization of $H_X(\rn)$. To this end, we also establish a useful estimate on
the change of angles in tent spaces associated with $X$ (see Theorem \ref{pp} below).
Particularly, the assumptions on the Littlewood--Paley
functions in this article are much weaker than the corresponding assumptions
in \cite{HYY,HLYY,lfy,LHY,lyy18,WYY,SHYY,YYYZ}
(see Remark \ref{wa} below).
Besides, under these weaker assumptions on $\varphi$
in the definition of Littlewood--Paley functions,
the $g_\lambda^\ast$-function characterization of $H_X(\rn)$ obtained in this
article improves the existing results via widening the range of
$\lambda$
in \cite{WYY} (see Theorem \ref{Thgx} below).
We point out that the $\varphi$ appearing in the definition of the Littlewood--Paley functions
only need to
satisfy a zero order vanishing moment, which coincides with the corresponding assumptions in \cite{FoS}
and, when $X:=L^p(\rn)$ with $p\in(0,\infty)$,
the range of $\lambda$ in
the Littlewood--Paley $g_\lambda^*$-function
characterization of $H_X(\rn)$ in Theorem \ref{Thgx} below coincides with
the best known one in \cite{FoS}, namely, $(\max\{1,2/p\},\infty)$.
In \cite{WYY}, the estimate on
the change of angles in tent spaces associated with $X$
plays a key role in the proof of the Littlewood--Paley $g_\lambda^*$-function
characterization of $H_X(\rn)$.
However, to optimize the range of $\lambda$ in the Littlewood--Paley $g_\lambda^*$
function characterization of $H_X(\rn)$ in \cite{WYY}, the
estimate on the change of angles in tent spaces associated with $X$ in \cite{WYY} is no longer feasible.
To establish a more precise estimate on the change of angles (see Theorem \ref{pp} below),
instead of applying the atomic characterization of the tent space which was used in \cite{WYY},
we employ a method different from \cite{WYY}, namely,
we now use an extrapolation theorem over ball Banach function spaces (see Lemma \ref{thet} below)
which was proved in \cite[Lemma 7.34]{ZWYY},
and then establish a more refined estimate on the change of angles (see Theorem \ref{pp} below).
The assumptions in this estimate on the ball quasi-Banach function space $X$
are much weaker than the corresponding assumptions in \cite[Lemma 2.20]{WYY}.
All of these results have wide applications.
When $X:=M_r^p({\mathbb R}^n)$ (the Morrey space), $X:=L^{\vec{p}}(\rn)$ (the mixed-norm Lebesgue space),
$X:=L^{p(\cdot)}({\mathbb R}^n)$ (the variable Lebesgue space),
$X:=L_\omega^p({\mathbb R}^n)$ (the weighted Lebesgue space) and $X:=(E_\Phi^r)_t({\mathbb R}^n)$ (the Orlicz-slice space),
the Littlewood--Paley function characterizations of $H_X({\mathbb R}^n)$ obtained in this
article improve the existing results in \cite{HLYY,SHYY,WYY,YYYZ,ZYYW} via weakening
the assumptions on the Littlewood--Paley functions and widening the range of $\lambda$ in the Littlewood--Paley $g_\lambda^*$-function
characterization.

To be precise, this article is organized as follows.

In Section \ref{s2},
we recall some notions concerning the ball (quasi)-Banach function space $X$.
Then we state the assumptions of the Fefferman--Stein vector-valued
maximal inequality on $X$ (see Assumption \ref{as1}  below)
and the boundedness on the $r$-convexification of its associated space
for the Hardy--Littlewood maximal operator
(see Assumption \ref{as2} below). Finally, we recall
the extrapolation theorem over ball quasi-Banach function spaces proved in \cite{ZWYY} and
some notions
about the
Hardy space $H_X(\rn)$ introduced in \cite{SHYY}.

In Section \ref{s3}, via \cite[Proposition 3.2]{MPA} (see Lemma \ref{coa} below) and
the extrapolation theorem (see Lemma \ref{thet} below)
which was proved in \cite[Lemma 7.34]{ZWYY}, we establish an estimate on
the change of angles in tent spaces associated with a ball quasi-Banach function space $X$
(see Theorem \ref{pp} below), which
plays a key role in the proof of the Littlewood--Paley $g_\lambda^*$-function
characterization of $H_X(\rn)$ (see Theorem \ref{Thgx} below).

Section \ref{s4}
contains some square function characterizations
of $H_X(\rn)$, including its characterizations via the Lusin area function, the Littlewood--Paley
$g$-function and the Littlewood--Paley $g_\lambda^\ast$-function, respectively,
in Theorems \ref{Tharea}, \ref{Thgx} and \ref{Thgf} below.
We first prove Theorem \ref{Tharea}, the Lusin area function characterization of $H_X(\rn)$.
To this end, via borrowing some ideas from \cite{WYYZ}, we use
the atomic characterization of the tent space
associated to $X$ (see Lemma \ref{lt} below)
to decompose a distribution $f$ into a sequence of molecules; then,
applying some ideas used in the proof of \cite[Theorem 3.7]{WYYZ},
we prove Theorem \ref{Tharea}
under some even weaker assumptions on the Lusin area function.
After we obtain the Lusin area function characterization of $H_X(\rn)$,
using the estimate on the change of angles (see Theorem \ref{pp} below),
we establish the Littlewood-Paley $g_\lambda^*$-function characterization of $H_X(\rn)$,
namely, we prove Theorem \ref{Thgx}.
Finally, applying
an estimate initiated
by Ullrich \cite{U} and further improved by Wang et al. \cite{WYYZ} (see Lemma \ref{Le67} below), we
obtain the Littlewood--Paley $g$-function characterization of $H_X(\rn)$, namely,
we show Theorem \ref{Thgf}.

In Section \ref{s5}, we apply the above results, respectively, to the Morrey space,
the mixed-norm Lebesgue space, the variable Lebesgue space, the weighted Lebesgue space
and the Orlicz-slice space. Recall that, in these five examples, only variable Lebesgue spaces
are quasi-Banach function spaces and the others are only ball quasi-Banach function spaces.

Finally, we make some conventions on notation. Let $\nn:=\{1,2,\ldots\}$, $\zz_+:=\nn\cup\{0\}$
and $\zz_+^n:=(\zz_+)^n$.
We always denote by $C$ a \emph{positive constant} which is independent of the main parameters,
but it may vary from line to line. We also use $C_{(\alpha,\beta,\ldots)}$ to denote a positive constant depending
on the indicated parameters $\alpha,\beta,\ldots.$ The \emph{symbol} $f\lesssim g$ means that $f\le Cg$.
If $f\lesssim g$ and $g\lesssim f$, we then write $f\sim g$. We also use the following
convention: If $f\le Cg$ and $g=h$ or $g\le h$, we then write $f\ls g\sim h$
or $f\ls g\ls h$, \emph{rather than} $f\ls g=h$
or $f\ls g\le h$. The \emph{symbol} $\lfloor s\rfloor$  for any $s\in\mathbb{R}$
denotes the largest integer not greater
than $s$. We use $\vec0_n$ to denote the \emph{origin} of $\rn$ and let
$\mathbb{R}^{n+1}_+:=\rn\times(0,\infty)$.
If $E$ is a subset of $\rn$, we denote by $\mathbf{1}_E$ its
characteristic function and by $E^\complement$ the set $\rn\setminus E$.
For any
$\theta:=(\theta_1,\ldots,\theta_n)\in\zz_+^n$, let $|\theta|:=\theta_1+\cdots+\theta_n$. Furthermore,
for any ball $B$ in $\rn$ and $j\in\zz_+$, let $S_j(B):=(2^{j+1}B)\setminus(2^jB)$ with $j\in\nn$
and $S_0(B):=2B$. Finally, for any $q\in[1,\infty]$, we denote by $q'$ its \emph{conjugate exponent},
namely, $1/q+1/q'=1$.

\section{Preliminaries\label{s2}}

In this section, we first present some preliminary known facts on the ball quasi-Banach function space $X$
in \S\ref{s2.1}.
Then we state the assumptions of the Fefferman--Stein vector-valued
maximal inequality on $X$
and the boundedness on the $s$-convexification of $X$ for the Hardy--Littlewood maximal operator
in \S\ref{s2.2}. In \S\ref{s2.3}, we recall the extrapolation theorem associated with
the ball quasi-Banach function space $X$. Finally,
the notion of the Hardy type space $H_X({\mathbb R}^n)$ associated with $X$ was recalled in \S\ref{s2.4}.

\subsection{Ball quasi-Banach function spaces\label{s2.1}}
In this subsection, we recall some preliminary known facts on ball quasi-Banach function spaces
introduced in \cite{SHYY}.

Denote by the \emph{symbol} $\mathscr M(\rn)$ the set of
all measurable functions on $\rn$.
For any $x\in\rn$ and $r\in(0,\infty)$, let $B(x,r):=\{y\in\rn:\ |x-y|<r\}$ and
\begin{equation}\label{Eqball}
\BB:=\lf\{B(x,r):\ x\in\rn\quad\text{and}\quad r\in(0,\infty)\r\}.
\end{equation}

\begin{definition}\label{Debqfs}
A quasi-Banach space $X\subset\mathscr M(\rn)$ is called a \emph{ball quasi-Banach function space} if it satisfies
\begin{itemize}
\item[(i)] $\|f\|_X=0$ implies that $f=0$ almost everywhere;
\item[(ii)] $|g|\le |f|$ almost everywhere implies that $\|g\|_X\le\|f\|_X$;
\item[(iii)] $0\le f_m\uparrow f$ almost everywhere implies that $\|f_m\|_X\uparrow\|f\|_X$;
\item[(iv)] $B\in\BB$ implies that $\mathbf{1}_B\in X$, where $\BB$ is as in \eqref{Eqball}.
\end{itemize}

Moreover, a ball quasi-Banach function space $X$ is called a
\emph{ball Banach function space} if the norm of $X$
satisfies the triangle inequality: for any $f,\ g\in X$,
\begin{equation}\label{eq22x}
\|f+g\|_X\le \|f\|_X+\|g\|_X
\end{equation}
and, for any $B\in \BB$, there exists a positive constant $C_{(B)}$, depending on $B$, such that, for any $f\in X$,
\begin{equation}\label{eq2.3}
\int_B|f(x)|\,dx\le C_{(B)}\|f\|_X.
\end{equation}
For any ball Banach function space $X$, the \emph{associate space} (also called the
\emph{K\"othe dual}) $X'$ is defined by setting
\begin{equation}\label{asso}
X':=\lf\{f\in\mathscr M(\rn):\ \|f\|_{X'}:=\sup\lf\{\|fg\|_{L^1(\rn)}:\ g\in X,\ \|g\|_X=1\r\}<\infty\r\},
\end{equation}
where $\|\cdot\|_{X'}$ is called the \emph{associate norm} of $\|\cdot\|_X$
(see, for instance, \cite[Chapter 1, Definitions 2.1 and 2.3]{BS}).
\end{definition}

\begin{remark}\label{bbf}
\begin{itemize}
\item[(i)] By \cite[Proposition 2.3]{SHYY}, we know that, if $X$ is a ball Banach function space,
then its associate space $X'$ is also a ball Banach function space.
\item[(ii)] Recall that a quasi-Banach space $X\subset\mathscr M(\rn)$ is called a \emph{quasi-Banach function space} if it
is a ball quasi-Banach function space and it satisfies Definition \ref{Debqfs}(iv) with ball
replaced by any measurable set of finite measure (see, for instance, \cite[Chapter 1, Definitions 1.1 and 1.3]{BS}).
It is easy to see that every quasi-Banach function space is a ball quasi-Banach function space.
As was mentioned in \cite[p.\,9]{SHYY} and \cite[Section 5]{WYYZ}, the family of ball Banach function spaces includes Morrey spaces, mixed-norm Lebesgue spaces,  variable Lebesgue spaces,  weighted Lebesgue spaces
and Orlicz-slice space,
which are not necessary to be Banach function spaces.
\end{itemize}
\end{remark}

The following lemma is just \cite[Lemma 2.6]{ZWYY}.
\begin{lemma}\label{Lesdual}
Every ball Banach function space $X$ coincides with its second associate space $X''$.
In other words, a function $f$ belongs to $X$ if and only if it belongs to $X''$ and,
in that case,
$$
\|f\|_X=\|f\|_{X''}.
$$
\end{lemma}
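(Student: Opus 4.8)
The assertion is the ball-function-space analogue of the Lorentz--Luxemburg theorem, and the plan is to follow its classical proof. First observe that, by Remark \ref{bbf}(i), the associate space $X'$ is again a ball Banach function space, so applying Remark \ref{bbf}(i) once more shows that $X''=(X')'$ is a ball Banach function space and hence $\|\cdot\|_{X''}$, defined via \eqref{asso} with $X$ replaced by $X'$, makes sense. The proof then splits into the easy inclusion $X\subset X''$ with $\|f\|_{X''}\le\|f\|_X$, and the converse $X''\subset X$ with $\|f\|_X\le\|f\|_{X''}$. For the former, I would first record the H\"older-type inequality $\|fg\|_{L^1(\rn)}\le\|f\|_X\|g\|_{X'}$ for all $f\in X$ and $g\in X'$: if $\|f\|_X=0$ then $f=0$ almost everywhere by Definition \ref{Debqfs}(i) and there is nothing to prove, while otherwise \eqref{asso} applied to $f/\|f\|_X$, together with the absolute homogeneity of $\|\cdot\|_X$, gives the claim. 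Interchanging the roles of $X$ and $X'$ in this inequality immediately yields, for any $f\in X$, that $\|f\|_{X''}=\sup\{\|fg\|_{L^1(\rn)}:\ g\in X',\ \|g\|_{X'}=1\}\le\|f\|_X$, so in particular $f\in X''$.

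For the converse, let $f\in X''$; by Definition \ref{Debqfs}(ii) applied in both $X$ and $X''$ we may assume $f\ge0$. For $m\in\nn$ put $f_m:=\min\{f,m\}\mathbf{1}_{B(\vec0_n,m)}$, so that $0\le f_m\uparrow f$ almost everywhere, and each $f_m$ is bounded with support in a ball and hence lies in $X$ by Definition \ref{Debqfs}(ii) and (iv). Since Definition \ref{Debqfs}(iii) gives both $\|f_m\|_X\uparrow\|f\|_X$ and $\|f_m\|_{X''}\uparrow\|f\|_{X''}$, the inequality $\|f\|_X\le\|f\|_{X''}$ (and thus $f\in X$) will follow once we show that $\|h\|_X\le\|h\|_{X''}$ for every $h\in\mathscr M(\rn)$ that is bounded, nonnegative and supported in some ball $B\in\BB$.

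To establish this, I would argue by Hahn--Banach separation in $L^1(B)$. By \eqref{eq2.3}, functions in $X$ supported in $B$ belong to $L^1(B)$, whose dual is $L^\infty(B)$. Let $U:=\{\phi\in X:\ \supp\phi\subset B,\ \|\phi\|_X\le1\}$; this set is convex by \eqref{eq22x}, balanced by homogeneity, and closed in $L^1(B)$, the last point because, if $\phi_k\in U$ with $\phi_k\to\phi$ in $L^1(B)$, then along a subsequence $\phi_k\to\phi$ almost everywhere and $\supp\phi\subset B$, while Definition \ref{Debqfs}(iii) applied to $\inf_{k\ge j}|\phi_k|\uparrow|\phi|$ forces $\|\phi\|_X\le\liminf_k\|\phi_k\|_X\le1$. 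Now suppose, towards a contradiction, that $\|h\|_X>\|h\|_{X''}$ for some bounded $h\ge0$ supported in $B$; scaling $h$ by a suitable positive constant we may assume $\|h\|_{X''}<1<\|h\|_X$, so $h\notin U$. Strict separation of the point $h$ from the closed convex set $U$ produces, after rescaling, $g\in L^\infty(B)$ with $\sup_{\phi\in U}|\int_{\rn}\phi\overline{g}|\le1<\mathrm{Re}\int_{\rn}h\overline{g}$, where passing to the modulus on the left uses that $U$ is balanced and the rescaling is legitimate because the separating functional cannot vanish identically on $B$ (since $U$ absorbs every element of $L^\infty(B)$ by Definition \ref{Debqfs}(ii) and (iv)). By Definition \ref{Debqfs}(ii) and the fact that $g$ vanishes off $B$, the supremum on the left equals $\|g\|_{X'}$, whence $\|g\|_{X'}\le1$; replacing $g$ by $|g|$ (which does not change $\|g\|_{X'}$) and using $h\ge0$ gives $\|h\|_{X''}\ge\int_{\rn}h|g|\ge\mathrm{Re}\int_{\rn}h\overline{g}>1$, contradicting $\|h\|_{X''}<1$. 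Hence $\|h\|_X\le\|h\|_{X''}$, which completes the converse inclusion, and together with the easy direction we conclude that $\|f\|_X=\|f\|_{X''}$ and that $f\in X$ if and only if $f\in X''$.

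The routine ingredients here --- H\"older's inequality, the exhaustion $f_m\uparrow f$, and the Fatou-type lower semicontinuity of $\|\cdot\|_X$ along almost everywhere convergent, uniformly $X$-bounded sequences --- cause no trouble. The step I expect to be the main obstacle is the core separation argument of the preceding paragraph: carrying out the Hahn--Banach separation in $L^1(B)$ correctly, and in particular establishing the identity $\sup_{\phi\in U}|\int_{\rn}\phi\overline{g}|=\|g\|_{X'}$ and ruling out the degenerate case in which the separating functional vanishes on $B$. This is precisely the point at which all of the defining axioms of a ball Banach function space come into play: the lattice property in Definition \ref{Debqfs}(ii), the membership $\mathbf{1}_B\in X$ from Definition \ref{Debqfs}(iv), the triangle inequality \eqref{eq22x}, the Fatou property in Definition \ref{Debqfs}(iii), and the local integrability \eqref{eq2.3} that embeds the $X$-functions supported in $B$ into $L^1(B)$.
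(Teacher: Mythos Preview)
The paper does not supply a proof of this lemma; it merely cites \cite[Lemma~2.6]{ZWYY}. Your argument is correct and is precisely the classical Lorentz--Luxemburg proof (cf.\ \cite[Chapter~1, Theorem~2.7]{BS}) transplanted to the ball setting: the H\"older direction is immediate, and for the converse your reduction to bounded nonnegative $h$ supported in a ball $B$, followed by Hahn--Banach separation of $\{h\}$ from the closed convex balanced set $U$ in $L^1(B)$, goes through. The identity $\sup_{\phi\in U}\bigl|\int_{\rn}\phi\,\overline g\,\bigr|=\|g\|_{X'}$ that you flag as the delicate point indeed holds, since $g$ is supported in $B$ and for any $\phi\in X$ with $\|\phi\|_X\le1$ the function $|\phi|\,\mathbf 1_{\{g\neq0\}}\,g/|g|$ lies in $U$ (by Definition~\ref{Debqfs}(ii)) and realises $\int|\phi g|$; the exclusion of a degenerate separating functional via the fact that $U$ absorbs $L^\infty(B)$, which is dense in $L^1(B)$, is also correct.
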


We still need to recall the notion of the convexity of ball quasi-Banach spaces,
which is a part of \cite[Definition 2.6]{SHYY}.
\begin{definition}\label{Debf}
Let $X$ be a ball quasi-Banach function space and $p\in(0,\infty)$.
The $p$-\emph{convexification} $X^p$ of $X$ is defined by setting $X^p:=\{f\in\mathscr M(\rn):\ |f|^p\in X\}$
equipped with the quasi-norm $\|f\|_{X^p}:=\||f|^p\|_X^{1/p}$.
\end{definition}

\begin{lemma}\label{Re2.6}
Let $X$ be a ball Banach function space and $p\in[1,\infty)$. Then $X^p$ is a ball Banach function space.
\end{lemma}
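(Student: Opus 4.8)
The plan is to verify, one by one, the defining conditions of a ball Banach function space for $X^p$, using the corresponding properties of $X$ together with the elementary fact that the map $f\mapsto|f|^p$ preserves the relevant order and monotone-convergence structure. Recall that $X^p=\{f\in\mathscr M(\rn):\ |f|^p\in X\}$ with $\|f\|_{X^p}=\||f|^p\|_X^{1/p}$, and that $p\in[1,\infty)$.

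First I would dispatch the four ball quasi-Banach function space axioms of Definition \ref{Debqfs}. Axiom (i) is immediate: $\|f\|_{X^p}=0$ means $\||f|^p\|_X=0$, hence $|f|^p=0$ a.e.\ by Definition \ref{Debqfs}(i) for $X$, so $f=0$ a.e. For (ii), if $|g|\le|f|$ a.e., then $|g|^p\le|f|^p$ a.e.\ (since $t\mapsto t^p$ is nondecreasing on $[0,\infty)$), so $\||g|^p\|_X\le\||f|^p\|_X$ by Definition \ref{Debqfs}(ii) for $X$, and taking $p$-th roots gives $\|g\|_{X^p}\le\|f\|_{X^p}$. For (iii), if $0\le f_m\uparrow f$ a.e., then $0\le f_m^p\uparrow f^p$ a.e.\ (again by monotonicity and continuity of $t\mapsto t^p$), so $\|f_m^p\|_X\uparrow\|f^p\|_X$ by Definition \ref{Debqfs}(iii) for $X$, whence $\|f_m\|_{X^p}\uparrow\|f\|_{X^p}$. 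For (iv), if $B\in\BB$ then $\mathbf 1_B^p=\mathbf 1_B\in X$, so $\mathbf 1_B\in X^p$. (That $X^p$ is a genuine quasi-Banach space — in particular complete — is part of the content of \cite[Definition 2.6]{SHYY} and Lemma \ref{Debf}; I would simply cite it, since $X$ is here even a Banach function space.)

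It remains to upgrade "ball quasi-Banach" to "ball Banach", i.e.\ to establish the triangle inequality \eqref{eq22x} for $\|\cdot\|_{X^p}$ and the local integrability estimate \eqref{eq2.3}. For the triangle inequality, given $f,g\in X^p$, I would use the pointwise bound $|f+g|^p\le(|f|+|g|)^p$ and then invoke the Minkowski-type inequality in the Banach function space $X$ applied to the $p$-th powers; concretely, $\||f+g|^p\|_X^{1/p}\le\|(|f|+|g|)^p\|_X^{1/p}$, and the latter is $\le\||f|^p\|_X^{1/p}+\||g|^p\|_X^{1/p}$ by the (quasi-norm reversed) Minkowski inequality for the $p$-convexification of a Banach lattice — this is the standard fact that the $p$-convexification ($p\ge1$) of a Banach function space is again a Banach function space, and the cleanest route is: for nonnegative $u,v$ with $u^{1/p},v^{1/p}$ meant as the summands, write $\|(u+v)\|_X = \sup\{\int(u+v)h:\ \|h\|_{X'}=1,\ h\ge0\}$ using Lemma \ref{Lesdual} (second associate space), split the integral, and apply Hölder's inequality in the exponent pair $(p,p')$ together with the definition of $\|\cdot\|_{X'}$. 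I expect this last point to be the main obstacle: one has to be slightly careful to run the duality argument inside a ball Banach function space rather than a classical Banach function space, but this is exactly what Lemma \ref{Lesdual} and Remark \ref{bbf}(i) are for, so the argument goes through verbatim. Finally, for \eqref{eq2.3}: fix $B\in\BB$; by Hölder's inequality with exponents $p$ and $p'$,
\begin{equation*}
\int_B|f(x)|\,dx\le\lf(\int_B|f(x)|^p\,dx\r)^{1/p}|B|^{1/p'}\le\lf(C_{(B)}\||f|^p\|_X\r)^{1/p}|B|^{1/p'}=C_{(B)}^{1/p}|B|^{1/p'}\|f\|_{X^p},
\end{equation*}
where the middle inequality uses \eqref{eq2.3} for $X$ applied to the function $|f|^p\in X$. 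Relabeling the constant yields \eqref{eq2.3} for $X^p$, completing the proof.
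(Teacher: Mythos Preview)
Your proposal is correct and follows essentially the same route as the paper: dispatch the ball quasi-Banach axioms quickly, prove the triangle inequality via the duality $X=X''$ (Lemma \ref{Lesdual}) together with Minkowski, and obtain local integrability from H\"older plus \eqref{eq2.3} for $X$. The only place your sketch is less precise than the paper is the triangle inequality step: the paper makes the duality argument completely explicit by writing $\||f+g|^p\|_{X''}^{1/p}=\sup_{\|h\|_{X'}=1}\|(f+g)h^{1/p}\|_{L^p(\rn)}$ and then applying the ordinary Minkowski inequality in $L^p(\rn)$, whereas your description (``split the integral, and apply H\"older's inequality in the exponent pair $(p,p')$'') is slightly garbled---it is Minkowski in $L^p$, not H\"older, that does the work once $h^{1/p}$ is absorbed into the integrand.
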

\begin{proof}
Let $p\in[1,\infty)$. From the fact that $X$ is a ball Banach function space and the definition of
$X^p$, it easily follows that $X^p$ is a ball quasi-Banach function space.
Thus, to prove that $X^p$ is a ball Banach function space,
it suffices to show that, for any $f,\ g\in X^p$,
\begin{equation}\label{q2.3}
\lf\|f+g\r\|_{X^p}\le\|f\|_{X^p}+\|g\|_{X^p}
\end{equation}
and, for any $B\in \BB$, there exists a positive constant $C_{(B)}$, depending on $B$, such that,
for any $f\in X$,
\begin{equation}\label{q2.4}
\int_B|f(x)|\,dx\le C_{(B)}\|f\|_X.
\end{equation}
We first prove \eqref{q2.3}.
By Definition \ref{Debf}, Lemma \ref{Lesdual}, \eqref{asso} and the Minkowski inequality, we conclude
that
\begin{align*}
\lf\|f+g\r\|_{X^p}&=\lf\||f+g|^p\r\|_X^{1/p}=\lf\||f+g|^p\r\|_{X''}^{1/p}\\
&=\lf[\sup\lf\{\lf\|(f+g)^ph\r\|_{L^1(\rn)}:\ h\in X',\ \|h\|_{X'}=1\r\}\r]^\frac1p\\
&=\sup\lf\{\lf\|(f+g)h^\frac1p\r\|_{L^p(\rn)}:\ h\in X',\ \|h\|_{X'}=1\r\}\\
&\le\sup\lf\{\lf\|fh^\frac1p\r\|_{L^p(\rn)}+\lf\|gh^\frac1p\r\|_{L^p(\rn)}:\ h\in X',\ \|h\|_{X'}=1\r\}\\
&\le\lf[\sup\lf\{\lf\||f|^ph\r\|_{L^1(\rn)}:\ h\in X',\ \|h\|_{X'}=1\r\}\r]^\frac1p\\
&\quad+
\lf[\sup\lf\{\lf\||g|^ph\r\|_{L^1(\rn)}:\ h\in X',\ \|h\|_{X'}=1\r\}\r]^\frac1p\\
&=\lf\||f|^p\r\|_{X}^\frac1p+\lf\||g|^p\r\|_{X}^\frac1p=\|f\|_{X^p}+\|g\|_{X^p},
\end{align*}
which implies that \eqref{q2.3} holds true.

Now we show \eqref{q2.4}.
From the H\"older inequality and \eqref{eq2.3}, we deduce that, for any $B\in \BB$ and $f\in X^p$,
\begin{equation*}
\int_B|f(x)|\,dx\le \lf[\int_B|f(x)|^p\,dx\r]^{1/p}|B|^{1-1/p}
\le C_{(B)}\lf\||f|^p\r\|_{X}^{1/p}=C_{(B)}\lf\|f\r\|_{X^p},
\end{equation*}
where the positive constant $C_{(B)}$ is independent of $f$ but depending on $B$.
Thus, \eqref{q2.4} holds true, which, combined with \eqref{q2.3}, then completes
the proof of Lemma \ref{Re2.6}.
\end{proof}

\subsection{Assumptions on the Hardy--Littlewood maximal operator\label{s2.2}}

Denote by the \emph{symbol $L_{\loc}^1(\rn)$} the set of all locally integrable functions on $\rn$.
The \emph{Hardy--Littlewood maximal operator} $\cm$
is defined by setting, for any $f\in L_{\loc}^1(\rn)$ and $x\in\rn$,
\begin{equation}\label{mm}
\cm(f)(x):=\sup_{B\ni x}\frac1{|B|}\int_B|f(y)|\,dy,
\end{equation}
where the supremum is taken over all balls $B\in\BB$ containing $x$.

For any $\theta\in(0,\infty)$, the \emph{powered Hardy--Littlewood
maximal operator} $\cm^{(\theta)}$ is defined by setting,
for any $f\in L_{\loc}^1(\rn)$ and $x\in\rn$,
\begin{equation}\label{mmx}
\cm^{(\theta)}(f)(x):=\lf\{\cm\lf(|f|^\theta\r)(x)\r\}^{1/\theta}.
\end{equation}

The approach used in this article heavily depends on the following assumptions
on the boundedness of the Hardy--Littlewood maximal function on $X$, which is just \cite[(2.8)]{SHYY}.

\begin{assumption}\label{as1}
Let $X$ be a ball quasi-Banach function space. For some $\theta,\ s\in(0,1]$
and $\theta<s$, there exists a positive
constant $C$ such that, for any $\{f_j\}_{j=1}^\infty\subset\mathscr M(\rn)$,
\begin{equation}\label{as1-1}
\lf\|\lf\{\sum_{j=1}^\infty\lf[\cm^{(\theta)}(f_j)\r]^s\r\}^\frac1s\r\|_X\le C\lf\|\lf\{\sum_{j=1}^\infty|f_j|^s\r\}^\frac1s\r\|_X.
\end{equation}
\end{assumption}
\begin{assumption}\label{as2}
Let $X$ be a ball quasi-Banach function space.
Assume that there exist $s\in(0,\infty)$ and $q\in(s,\infty]$ such that $X^{1/s}$ is a ball Banach function space and,
for any $f\in (X^{1/s})'$,
\begin{equation}\label{as1-2}
\lf\|\cm^{((q/s)')}(f)\r\|_{(X^{1/s})'}\le C\lf\|f\r\|_{(X^{1/s})'},
\end{equation}
where the positive constant $C$ is independent of $f$.
\end{assumption}

\begin{lemma}\label{Le2.7}
Let $X$ be a ball Banach function space and $p\in[1,\infty)$. If $\cm$ is bounded on $X$,
then $\cm$ is also bounded on $X^p$.
\end{lemma}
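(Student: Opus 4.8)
The plan is to pass to the associate space and exploit the duality identity $X = X''$ from Lemma \ref{Lesdual}. First I would record the elementary reductions: since $\cm$ is bounded on $X$ and $p \in [1,\infty)$, one checks easily from the definitions that $\cm$ is bounded on $X$ with constant $c_0$, say. The goal is to control $\|\cm(f)\|_{X^p}$ by $\|f\|_{X^p}$ for all $f \in X^p$. By Definition \ref{Debf}, $\|\cm(f)\|_{X^p} = \||\cm(f)|^p\|_X^{1/p}$, and since $\cm(f)$ is nonnegative this is $\|[\cm(f)]^p\|_X^{1/p}$. Using Lemma \ref{Lesdual} (note $X$ is a ball Banach function space, so $X'$ and $X''$ make sense and $X = X''$), we can rewrite
\begin{equation*}
\lf\|[\cm(f)]^p\r\|_X = \lf\|[\cm(f)]^p\r\|_{X''} = \sup\lf\{\lf\|[\cm(f)]^p h\r\|_{L^1(\rn)}:\ h\in X',\ \|h\|_{X'}=1\r\}.
\end{equation*}

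The key step is to linearize $\cm$ and move it off of $f$ and onto the test function $h$. For a fixed ball $B \ni x$, Hölder's inequality (with exponents $p$ and $p'$, valid since $p \ge 1$) gives
\begin{equation*}
\frac{1}{|B|}\int_B |f(y)|\,dy \le \lf[\frac{1}{|B|}\int_B |f(y)|^p\,dy\r]^{1/p} = \lf[\cm(|f|^p)(x)\r]^{1/p},
\end{equation*}
hence $[\cm(f)(x)]^p \le \cm(|f|^p)(x)$ pointwise. Therefore, for any $h \in X'$ with $h \ge 0$ and $\|h\|_{X'} = 1$,
\begin{equation*}
\lf\|[\cm(f)]^p h\r\|_{L^1(\rn)} \le \int_\rn \cm(|f|^p)(x)\, h(x)\,dx.
\end{equation*}
Now I would use the well-known adjoint (self-adjointness up to the maximal operator) property: since $\cm$ is sublinear and the integral kernel of an averaging operator over a ball is symmetric, $\int_\rn \cm(g)(x) h(x)\,dx$ is controlled by $\int_\rn g(x)\, \cm(h)(x)\,dx$ — more precisely, this follows because for each $x$ there is a ball $B_x \ni x$ nearly attaining $\cm(g)(x)$, and a standard Fubini/covering argument (or directly the pointwise bound applied after swapping) yields $\int \cm(g) h \le C \int g\, \cm(h)$ for nonnegative $g,h$. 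Applying this with $g = |f|^p$, and then Hölder's inequality for the pair $(X, X')$, we obtain
\begin{equation*}
\int_\rn \cm(|f|^p)(x)\, h(x)\,dx \le C\int_\rn |f(x)|^p \cm(h)(x)\,dx \le C\lf\||f|^p\r\|_X \lf\|\cm(h)\r\|_{X'}.
\end{equation*}

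To finish, I need $\cm$ bounded on $X'$, which is exactly where the hypothesis that $X$ is a ball \emph{Banach} function space (not merely quasi-Banach) enters: the boundedness of $\cm$ on $X$ together with general duality theory for ball Banach function spaces gives boundedness of $\cm$ on $X'$. (Alternatively, if one wishes to avoid invoking that duality principle, one can instead directly use the pointwise bound $[\cm(f)]^p \le \cm(|f|^p)$ and then argue that $\cm$ is bounded on $X = (X^p)^{\text{(relevant power)}}$... but the cleanest route is the duality one.) Taking the supremum over all admissible $h$ then yields $\|[\cm(f)]^p\|_X \le C \||f|^p\|_X$, and raising to the power $1/p$ gives $\|\cm(f)\|_{X^p} \le C^{1/p}\|f\|_{X^p}$, as desired. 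The main obstacle I anticipate is justifying the ``adjoint'' inequality $\int \cm(g)h \le C\int g\,\cm(h)$ rigorously for general nonnegative measurable $g,h$ (handling the case where $\cm(g)h$ is not integrable, and the covering argument behind swapping the averaging balls); everything else is routine manipulation of the convexification and associate-space definitions already set up in the excerpt.
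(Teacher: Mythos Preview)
Your ``alternative'' in the parenthetical is in fact the entire proof, and it is exactly what the paper does. Once you have the pointwise inequality $[\cm(f)]^p \le \cm(|f|^p)$ (which you derive correctly via H\"older), you simply apply the $X$-norm, use monotonicity (Definition \ref{Debqfs}(ii)), and invoke boundedness of $\cm$ on $X$ with the function $|f|^p$:
\[
\|\cm(f)\|_{X^p} = \lf\|[\cm(f)]^p\r\|_X^{1/p} \le \lf\|\cm(|f|^p)\r\|_X^{1/p} \lesssim \lf\||f|^p\r\|_X^{1/p} = \|f\|_{X^p}.
\]
No duality, no associate space, no adjoint inequality is needed.

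Your main line of argument, by contrast, has a genuine gap. The ``adjoint'' estimate $\int_\rn \cm(g)\,h \le C\int_\rn g\,\cm(h)$ for nonnegative $g,h$ is precisely the $p=1$ endpoint of the Fefferman--Stein weighted inequality, and it is \emph{false} in general: only the weak-type $(1,1)$ version holds at that endpoint. The covering/Fubini heuristic you sketch does not produce a strong-type bound. In addition, the assertion that boundedness of $\cm$ on $X$ automatically transfers to $X'$ via ``general duality theory'' is not something available from the setup in the paper; such transfer results (of Lorentz--Shimogaki type) require further structure and are not proved here. So the duality route, as written, does not close; fortunately it is also entirely unnecessary.
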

\begin{proof}
By the H\"older inequality, we know that, for any $p\in[1,\infty)$
and any locally integrable function $f$,
$[\cm(f)]^p\le \cm(|f|^p)$.
From this and the fact that $\cm$ is bounded on $X$, we deduce that
\begin{equation}\label{Eq2.8}
\lf\|\cm(f)\r\|_{X^p}=\lf\|[\cm(f)]^p\r\|_X^\frac1p
\le\lf\|\cm(|f|^p)\r\|_X^\frac1p\lesssim\lf\||f|^p\r\|_X^\frac1p\sim\|f\|_{X^p}.
\end{equation}
This finishes the proof of Lemma \ref{Le2.7}.
\end{proof}
\begin{lemma}\label{Le2.9}
Let $X$ be a ball quasi-Banach function space.
Assume that there exists a $\theta\in(1,\infty)$ such that,
for any $f\in \mathscr M(\rn)$,
\begin{equation*}
\lf\|\cm^{(\theta)}(f)\r\|_{X}\le C\lf\|f\r\|_{X},
\end{equation*}
where the positive constant $C$ is independent of $f$.
Then
$\cm$ is bounded on $X$, namely, there exists a positive constant $C$ such
that, for any $f\in X$, $\|\cm(f)\|_{X}\le C\|f\|_{X}$.
\end{lemma}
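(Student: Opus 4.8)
The plan is to deduce the boundedness of $\cm$ on $X$ directly from a pointwise comparison between $\cm$ and $\cm^{(\theta)}$, using nothing more than the lattice property of $X$ recorded in Definition \ref{Debqfs}(ii). The key observation is that, since $\theta\in(1,\infty)$, for any ball $B\in\BB$ and any $f\in L_{\loc}^1(\rn)$, the H\"older inequality with exponents $\theta$ and $\theta'$ gives
\begin{equation*}
\frac1{|B|}\int_B|f(y)|\,dy\le\lf[\frac1{|B|}\int_B|f(y)|^\theta\,dy\r]^{1/\theta}
\lf[\frac1{|B|}\int_B\,dy\r]^{1/\theta'}=\lf[\frac1{|B|}\int_B|f(y)|^\theta\,dy\r]^{1/\theta}.
\end{equation*}
Taking the supremum over all balls $B\ni x$ then yields, for every $x\in\rn$, the pointwise estimate
\begin{equation*}
\cm(f)(x)\le\lf\{\cm\lf(|f|^\theta\r)(x)\r\}^{1/\theta}=\cm^{(\theta)}(f)(x).
\end{equation*}

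With this estimate in hand, I would invoke Definition \ref{Debqfs}(ii), i.e., the monotonicity of the quasi-norm $\|\cdot\|_X$ with respect to the absolute value, together with the assumed inequality $\|\cm^{(\theta)}(f)\|_X\le C\|f\|_X$ (valid for all $f\in\mathscr M(\rn)$, hence in particular for all $f\in X$), to conclude that, for any $f\in X$,
\begin{equation*}
\|\cm(f)\|_X\le\lf\|\cm^{(\theta)}(f)\r\|_X\le C\|f\|_X,
\end{equation*}
which is exactly the desired boundedness of $\cm$ on $X$.

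Since the argument reduces to an elementary pointwise domination plus the lattice axiom of $X$, there is essentially no genuine obstacle; the only points worth flagging are that the hypothesis is used for $f$ ranging over all of $\mathscr M(\rn)$ (equivalently, over $X$), and that the comparison $\cm(f)\le\cm^{(\theta)}(f)$ truly needs $\theta\ge1$, which is guaranteed by $\theta\in(1,\infty)$. This should be contrasted with Lemma \ref{Le2.7}, where the complementary inequality $[\cm(f)]^p\le\cm(|f|^p)$ for $p\ge1$ is what is exploited instead.
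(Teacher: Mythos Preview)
Your proof is correct and in fact more direct than the paper's. You use the pointwise domination $\cm(f)\le\cm^{(\theta)}(f)$ (an immediate consequence of H\"older/Jensen on each ball) together with the lattice property of $X$ to conclude in one line. The paper instead first rewrites $[\cm(f)]^{1/\theta}=\cm^{(\theta)}(|f|^{1/\theta})$ to show that $\cm$ is bounded on the convexification $X^{1/\theta}$, and then invokes Lemma \ref{Le2.7} (with exponent $\theta\ge1$) to transfer this boundedness back to $(X^{1/\theta})^{\theta}=X$. Both arguments rest on the same Jensen-type inequality for averages, but your route avoids the detour through $X^{1/\theta}$ and Lemma \ref{Le2.7} entirely; the paper's route, while longer, has the minor byproduct of recording that $\cm$ is already bounded on $X^{1/\theta}$.
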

\begin{proof}
From Definition \ref{Debf}, \eqref{mmx} and \eqref{as1-2},
it follows that, for any $f\in \mathscr M(\rn)$,
$$
\lf\|\cm(f)\r\|_{X^{1/\theta}}=\lf\|\lf[\cm(f)\r]^{1/\theta}\r\|_{X}^{\theta}
=\lf\|\cm^{(\theta)}(f^{1/\theta})\r\|_{X}^{\theta}
\lesssim\lf\|f\r\|_{X^{1/\theta}},
$$
which, together with Lemma \ref{Le2.7} and $\theta\in(1,\infty)$, further implies
$$
\lf\|\cm(f)\r\|_{X}\le C\lf\|f\r\|_{X}.
$$
This finishes the proof of Lemma \ref{Le2.9}.
\end{proof}

\subsection{Extrapolation theorem on ball Banach function spaces\label{s2.3}}

Now, we recall the notions of Muckenhoupt weights $A_p(\rn)$ (see, for instance, \cite{G1}).

\begin{definition}\label{weight}
An \emph{$A_p(\rn)$-weight} $\omega$, with $p\in[1,\infty)$, is a
locally integrable and nonnegative function on $\rn$ satisfying that,
when $p\in(1,\infty)$,
\begin{equation*}
\sup_{B\in\BB}\lf[\frac1{|B|}\int_B\omega(x)\,dx\r]\lf[\frac1{|B|}
\int_B\lf\{\omega(x)\r\}^{\frac1{1-p}}\,dx\r]^{p-1}<\infty
\end{equation*}
and, when $p=1$,
$$
\sup_{B\in\BB}\frac1{|B|}\int_B\omega(x)\,dx\lf[\lf\|\omega^{-1}\r\|_{L^\infty(B)}\r]<\infty,
$$
where $\BB$ is as in \eqref{Eqball}. Define $A_\infty(\rn):=\bigcup_{p\in[1,\infty)}A_p(\rn)$.
\end{definition}

\begin{definition}\label{wk}
Let $p\in(0,\infty)$ and $\omega\in A_\infty(\rn)$.
The \emph{weighted Lebesgue space $L_\omega^p(\rn)$} is defined
to be the set of all measurable functions $f$ such that
$$
\|f\|_{L^p_\omega(\rn)}:=\lf[\int_\rn|f(x)|^p\omega(x)\,dx\r]^\frac1p<\infty.
$$
\end{definition}

The following extrapolation theorem is just \cite[Lemma 7.34]{ZWYY},
which is a slight variant of a special case of \cite[Theorem 4.6]{CMP} via
replacing Banach function spaces by ball Banach function spaces.

\begin{lemma}\label{thet}
Let $X$ be a ball Banach function space and $p_0\in(0,\infty)$.
Let $\mathcal{F}$ be the set of all
pairs of nonnegative measurable
functions $(F,G)$ such that, for any given $\omega\in A_1(\rn)$,
$$
\int_{\rn}[F(x)]^{p_0}\omega(x)\,dx\leq C_{(p_0,[\omega]_{A_1(\rn)})}
\int_{\rn}[G(x)]^{p_0}\omega(x)\,dx,
$$
where $C_{(p_0,[\omega]_{A_1(\rn)})}$ is a positive constant independent
of $(F,G)$, but depends on $p_0$ and $[\omega]_{A_1(\rn)}$.
Assume that there exists a $q_0\in[p_0,\infty)$ such that $X^{1/q_0}$ is
a Banach function space and $\cm$ is bounded on $(X^{1/q_0})'$. Then there
exists a positive constant $C$ such that, for any $(F,G)\in\mathcal{F}$,
$$
\|F\|_{X}\leq C\|G\|_{X}.
$$
\end{lemma}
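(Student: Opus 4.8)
The plan is to deduce Lemma \ref{thet} from the classical Rubio de Francia extrapolation theorem in the form of \cite[Theorem 4.6]{CMP}, whose statement is identical except that it is phrased for Banach function spaces rather than ball Banach function spaces. Since the hypothesis already supplies a $q_0\in[p_0,\infty)$ for which $X^{1/q_0}$ is a genuine Banach function space and $\cm$ is bounded on $(X^{1/q_0})'$, the only gap to close is that \cite[Theorem 4.6]{CMP} is formulated with the ``$\cm$ bounded on $Y'$'' hypothesis on the space $Y:=X^{1/q_0}$ itself, producing the conclusion $\|F\|_{Y}\le C\|G\|_{Y}$ for pairs in the rescaled family $\mathcal F_{q_0}:=\{(F^{q_0},G^{q_0}):(F,G)\in\mathcal F\}$, and then one unscales. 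So the argument is essentially a rescaling bookkeeping on top of a cited theorem.

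First I would recall the rescaling trick: if $(F,G)\in\mathcal F$, then for every $\omega\in A_1(\rn)$ one has $\int_\rn [F^{q_0/q_0}]^{p_0}\omega\le C\int_\rn[G]^{p_0}\omega$; I would instead look at the pair $(F^{q_0},G^{q_0})$ and observe that it satisfies the $A_p$-type testing condition with exponent $p_0/q_0\le1$. The standard extrapolation machinery (see \cite[Chapter 3]{CMP}) actually runs for any fixed $p_0\in(0,\infty)$ and yields, for any Banach function space $Y$ on which $\cm$ is bounded on $Y'$, the estimate $\|F\|_{Y}\le C\|G\|_{Y}$ for all $(F,G)$ in the family defined with that $p_0$. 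Applying this with $Y:=X^{1/q_0}$ (a Banach function space by hypothesis, with $\cm$ bounded on $Y'$ by hypothesis) and with the rescaled family, I get, for any $(F,G)\in\mathcal F$,
\begin{equation*}
\lf\|F^{q_0}\r\|_{X^{1/q_0}}\le C\lf\|G^{q_0}\r\|_{X^{1/q_0}}.
\end{equation*}
Then, by Definition \ref{Debf}, $\|F^{q_0}\|_{X^{1/q_0}}=\||F|^{q_0}\|_{X^{1/q_0}}=\||F|\|_{X}^{q_0}=\|F\|_X^{q_0}$, and likewise for $G$; taking $q_0$-th roots gives exactly $\|F\|_X\le C^{1/q_0}\|G\|_X$, as desired. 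The point that makes this legitimate is that the conclusion of the Banach-function-space extrapolation theorem only uses that $Y$ satisfies the lattice and Fatou properties and that $\cm:Y'\to Y'$ is bounded, none of which is affected by whether $Y$ satisfies the full Banach-function-space axiom on arbitrary finite-measure sets versus only on balls; in fact one could equally quote \cite[Lemma 7.34]{ZWYY} verbatim here, since that is precisely where this ball-version was recorded.

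The main obstacle, such as it is, is not analytic but bibliographic: one must check that the proof of \cite[Theorem 4.6]{CMP} does not secretly use the finite-measure-set version of the Banach function space axioms (for instance, through an approximation by simple functions or through duality on sets of finite measure). The resolution, and the reason the lemma is stated as it is, is that \cite{ZWYY} already carried out exactly this verification, so in the present paper Lemma \ref{thet} is simply quoted as \cite[Lemma 7.34]{ZWYY} and no independent proof is needed; the sketch above is the route one would take were a self-contained argument required. I would therefore present the ``proof'' as: this is \cite[Lemma 7.34]{ZWYY}, which in turn follows from \cite[Theorem 4.6]{CMP} by the rescaling $Y=X^{1/q_0}$, $(F,G)\mapsto(F^{q_0},G^{q_0})$ described above.
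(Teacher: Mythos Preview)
Your proposal is correct and matches the paper's treatment: the paper gives no proof of Lemma \ref{thet} at all but simply records it as \cite[Lemma 7.34]{ZWYY}, observing (exactly as you do) that it is a slight variant of a special case of \cite[Theorem 4.6]{CMP} obtained by replacing Banach function spaces with ball Banach function spaces. Your additional sketch of the rescaling argument is a reasonable outline of how the cited result is obtained, though note that since \cite[Theorem 4.6]{CMP} already carries the auxiliary exponent $p\ge p_0$ in its hypothesis (playing the role of your $q_0$), one can in fact apply it directly to $\mathbb X:=X$ with $p:=q_0$ and avoid the rescaling bookkeeping altogether.
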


\subsection{Hardy type spaces \label{s2.4}}

Now we recall the notion of Hardy type spaces associated with ball quasi-Banach function spaces introduced in \cite{SHYY}.

\begin{definition}\label{HS}
Let $X$ be a ball quasi-Banach function space.
Let $\psi\in\cs(\rn)$ satisfy $\int_\rn\psi(x)\,dx\neq0$ and $b\in(0,\infty)$ sufficiently large.
Then the \emph{Hardy space} $H_X(\rn)$ associated with $X$ is defined by setting
$$
H_X(\rn):=\lf\{f\in\cs'(\rn):\ \|f\|_{H_X(\rn)}:=\lf\|M_b^{**}(f,\psi)\r\|_X<\infty\r\},
$$
where $M_b^{**}(f,\psi)$ is defined by setting, for any $x\in\rn$,
\begin{equation}\label{EqMb}
M_b^{**}(f,\psi)(x):=\sup_{(y,t)\in\rr_+^{n+1}}\frac{|\psi_t\ast f(x-y)|}{(1+|y|/t)^b}.
\end{equation}
\end{definition}

\begin{definition}\label{Demol}
Let $X$ be a ball quasi-Banach function space, $\epsilon\in(0,\infty)$, $q\in[1,\infty]$ and $d\in\zz_+$.
A measurable function $m$ is called an $(X,\,q,\,d,\,\epsilon)$-\emph{molecule} associated with some ball $B\subset\rn$ if
\begin{enumerate}
\item[(i)] for any $j\in\nn$, $\|m\|_{L^q(S_j(B))}\le2^{-j\epsilon}|S_j(B)|^\frac1q\|\mathbf{1}_B\|_{X}^{-1}$,
where $S_0:=B$ and, for any $j\in\nn$, $S_j(B):=(2^jB)\setminus(2^{j-1}B)$;
\item[(ii)] $\int_\rn m(x)x^\beta\,dx=0$ for any $\beta:=(\beta_1,\ldots,\beta_n)\in\zz_+^n$
with $|\beta|:=\beta_1+\cdots+\beta_n\le d$,
here and thereafter, for any $x:=(x_1,\ldots,x_n)\in\rn$,
$x^\beta:=x_1^{\beta_1}\cdots x_n^{\beta_n}$.
\end{enumerate}
\end{definition}

We also need the following molecular characterization of $H_{X}(\rn)$, which is just \cite[Theorem 3.9]{SHYY}.

\begin{lemma}\label{Mole}
Assume that $X$ is a ball quasi-Banach function space satisfying Assumption \ref{as1}
with $0<\theta< s\le1$ and Assumption \ref{as2} with some $q\in(1,\infty]$ and the same $s$ as in Assumption \ref{as1}.
Let $d\in\zz_+$ with $d\geq\lfloor n(1/\theta-1)\rfloor$ and $\epsilon\in(0,\infty)$ satisfy $\epsilon>n(1/\theta-1/q)$.
Then $f\in H_X(\rn)$ if and only if there exist a sequence $\{m_j\}_{j=1}^\infty$ of $(X,\,q,\,d,\,\epsilon)$-
molecules associated, respectively, with the balls $\{B_j\}_{j=1}^\infty\subset\BB$, and $\{\lambda_j\}_{j=1}^\infty\subset[0,\infty)$
satisfying
$$
\lf\|\lf\{\sum_{j=1}^\infty\lf(\frac{\lambda_j}{\|\mathbf1_{B_j}\|_X}\r)^s\mathbf1_{B_j}\r\}^{1/s}\r\|_X<\infty
$$
such that $\sum_{j=1}^\infty\lambda_jm_j$ converges in $\cs'(\rn)$. Moreover,
$$
\|f\|_{H_X(\rn)}\sim\lf\|\lf\{\sum_{j=1}^\infty\lf(\frac{\lambda_j}{\|\mathbf1_{B_j}\|_X}
\r)^s\mathbf1_{B_j}\r\}^{1/s}\r\|_X,
$$
where the positive equivalence constants are independent of $f$.
\end{lemma}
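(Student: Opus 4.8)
I would prove the two implications separately: the ``only if'' direction by a Calder\'on--Zygmund decomposition of the radial maximal function, and the ``if'' direction by a superposition estimate. The latter is the genuinely hard half, and it is there that the hypotheses $d\ge\lfloor n(1/\theta-1)\rfloor$ and $\epsilon>n(1/\theta-1/q)$ are consumed.

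\emph{Necessity.} Given $f\in H_X(\rn)$, set $u:=M_b^{**}(f,\psi)$ and, for $k\in\zz$, $\Omega_k:=\{x\in\rn:\ u(x)>2^k\}$. Each $\Omega_k$ is open; I would take a Whitney covering $\{B_{k,i}\}_i$ of $\Omega_k$ by balls of bounded overlap with a subordinate smooth partition of unity, and run the classical Fefferman--Stein grand maximal function argument, keeping every estimate in terms of $\mathbf{1}_{B_{k,i}}$, to write $f=\sum_{k,i}\lambda_{k,i}m_{k,i}$ in $\cs'(\rn)$ with $m_{k,i}$ supported in $B_{k,i}$, $\int_\rn m_{k,i}(x)x^\beta\,dx=0$ for all $|\beta|\le d$, $\|m_{k,i}\|_{L^q(B_{k,i})}\ls|B_{k,i}|^{1/q}\|\mathbf{1}_{B_{k,i}}\|_X^{-1}$, and $\lambda_{k,i}\sim 2^k\|\mathbf{1}_{B_{k,i}}\|_X$. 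Since $m_{k,i}$ vanishes off $B_{k,i}$, it is automatically an $(X,q,d,\epsilon)$-molecule associated with $B_{k,i}$ for every $\epsilon\in(0,\infty)$, so the claimed decomposition exists; and from $\bigcup_iB_{k,i}\subset\Omega_k$ and the bounded overlap,
$$
\sum_{k,i}\lf(\frac{\lambda_{k,i}}{\|\mathbf{1}_{B_{k,i}}\|_X}\r)^s\mathbf{1}_{B_{k,i}}\ls\sum_k2^{ks}\mathbf{1}_{\Omega_k}\ls u^s
$$
pointwise, whence the molecular square-function norm is $\ls\|u\|_X=\|f\|_{H_X(\rn)}$; the $\cs'(\rn)$-convergence and completeness of $H_X(\rn)$ identify the sum with $f$.

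\emph{Sufficiency.} Let $f=\sum_j\lambda_jm_j$ converge in $\cs'(\rn)$ with each $m_j$ an $(X,q,d,\epsilon)$-molecule on $B_j$. Continuity of $g\mapsto\psi_t\ast g(x-y)$ on $\cs'(\rn)$ gives $|\psi_t\ast f(x-y)|\le\sum_j\lambda_j|\psi_t\ast m_j(x-y)|$, hence, by Definition \ref{HS}, $M_b^{**}(f,\psi)\le\sum_j\lambda_jM_b^{**}(m_j,\psi)$. I would then establish a pointwise estimate for one molecule: fixing $b$ large (in terms of $\theta$) so that $M_b^{**}(g,\psi)\ls\cm^{(\theta)}(g)$ for molecules $g$, using this on the annular pieces $m_j\mathbf{1}_{S_i(B_j)}$ to handle the part of $\psi_t\ast m_j$ near $B_j$ and a Taylor expansion of $\psi_t$ to order $d$ together with $\int_\rn m_j(z)z^\beta\,dz=0$ for $|\beta|\le d$ to handle the part far from $B_j$, one obtains a bound
$$
M_b^{**}(m_j,\psi)(x)\ls\sum_{i=0}^\infty\cm^{(\theta)}(g_{i,j})(x),
$$
where $g_{i,j}$ is essentially supported in $2^{i}B_j$ with $\|g_{i,j}\|_{L^q(\rn)}\ls2^{-i\delta}|2^iB_j|^{1/q}\|\mathbf{1}_{B_j}\|_X^{-1}$, and the decay $\delta$ is dictated by the hypotheses $d\ge\lfloor n(1/\theta-1)\rfloor$ (the cancellation contributes a gain of order $n+d+1$) and $\epsilon>n(1/\theta-1/q)$ (which governs the $L^q$-size of the far pieces).

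To sum over $j$, I would combine the two Assumptions. By Assumption \ref{as2}, $X^{1/s}$ is a ball Banach function space, and by Lemma \ref{Lesdual} its norm is $\sup\{\int(\cdot)h:\ h\ge0,\ \|h\|_{(X^{1/s})'}=1\}$. Using $\|G\|_X=\|G^s\|_{X^{1/s}}^{1/s}$ for $G\ge0$, the triangle inequality in $X^{1/s}$, the elementary bound $(\sum_ja_j)^s\le\sum_ja_j^s$, and Assumption \ref{as1} with $f_j:=\lambda_jg_{i,j}$, one reduces the contribution of the $i$-th term to estimating $\|\sum_j\lambda_j^s|g_{i,j}|^s\|_{X^{1/s}}$; by duality, the H\"older inequality on each $2^iB_j$ with exponents $q/s$ and $(q/s)'$, the bound $\|h\|_{L^{(q/s)'}(2^iB_j)}\ls|2^iB_j|^{1/(q/s)'}\inf_{B_j}\cm^{((q/s)')}(h)$ and the identity $s/q+1/(q/s)'=1$,
$$
\sum_j\lambda_j^s\int_{2^iB_j}|g_{i,j}|^sh\ls 2^{i(n-\delta s)}\int_\rn\lf[\sum_j\lf(\frac{\lambda_j}{\|\mathbf{1}_{B_j}\|_X}\r)^s\mathbf{1}_{B_j}(x)\r]\cm^{((q/s)')}(h)(x)\,dx,
$$
and Assumption \ref{as2} (boundedness of $\cm^{((q/s)')}$ on $(X^{1/s})'$) bounds the last integral by $\ls\|\{\sum_j(\lambda_j/\|\mathbf{1}_{B_j}\|_X)^s\mathbf{1}_{B_j}\}^{1/s}\|_X^s$. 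Summing the geometric series in $i$---which converges because $\delta$ was chosen to beat the inflation loss $2^{in}$ incurred in passing from $B_j$ to $2^iB_j$---gives $\|f\|_{H_X(\rn)}\ls\|\{\sum_j(\lambda_j/\|\mathbf{1}_{B_j}\|_X)^s\mathbf{1}_{B_j}\}^{1/s}\|_X$; the same estimate applied to the tails $\sum_{j\ge J}\lambda_jm_j$ shows the partial sums are Cauchy in $H_X(\rn)$, so $f\in H_X(\rn)$ and the two quasi-norms are equivalent.

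\emph{Main obstacle.} The crux is the pointwise dyadic bound on $M_b^{**}(m_j,\psi)$ and, above all, extracting from it a decay sufficient to overcome the $2^{in}$ loss in the inflation $B_j$ to $2^iB_j$, using only $d\ge\lfloor n(1/\theta-1)\rfloor$ and $\epsilon>n(1/\theta-1/q)$. Treating the annular pieces of $m_j$ independently, the raw decay $2^{-i\epsilon}$ is by itself insufficient, so one must exploit that each piece is controlled in $L^q$ rather than in $L^\infty$---this is exactly why the H\"older step uses the exponent $(q/s)'$ and why Assumption \ref{as2}, not merely Assumption \ref{as1}, is needed---and combine it with the cancellation of the \emph{whole} molecule for the far field. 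Making this bookkeeping work, uniformly in $j$, is the delicate point; everything else is the familiar Calder\'on--Zygmund/Fefferman--Stein machinery transplanted to the ball quasi-Banach setting.
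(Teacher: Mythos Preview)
The paper does not prove this lemma at all; it is simply quoted as \cite[Theorem 3.9]{SHYY}. So there is no ``paper's own proof'' to compare against, only the original in \cite{SHYY}. Your architecture is the standard one and is essentially what is carried out there: necessity via the atomic decomposition (atoms being compactly supported molecules with $\epsilon=\infty$), and sufficiency by controlling the maximal function of a single molecule and then summing via Assumptions~\ref{as1} and~\ref{as2} in tandem, exactly through the duality/H\"older computation you wrote down. That part of your argument is correct and is precisely how the inflation loss $2^{in}$ is absorbed.

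One step, however, is stated too loosely to be correct as written. You assert that for $b$ large ``$M_b^{**}(g,\psi)\lesssim\mathcal{M}^{(\theta)}(g)$ for molecules $g$'' and then apply this to the annular pieces $m_j\mathbf{1}_{S_i(B_j)}$. But those pieces carry no vanishing moments, and without cancellation the best pointwise bound on the maximal function of a compactly supported $L^q$ function is of order $\mathcal{M}$, not $\mathcal{M}^{(\theta)}$ with $\theta<1$; the exponent $1/\theta$ is exactly what makes Assumption~\ref{as1} applicable, so this is not a cosmetic point. What is actually done (in \cite{SHYY} and in the classical Taibleson--Weiss argument) is to subtract from each $m_j\mathbf{1}_{S_i(B_j)}$ a polynomial-times-bump correction so that the corrected piece has vanishing moments up to order $d$ on $2^{i}B_j$; the corrections telescope and are harmless. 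Only after this correction does the Taylor expansion give the far-field decay $[\mathcal{M}(\mathbf{1}_{2^iB_j})]^{(n+d+1)/n}$, and the hypothesis $d\ge\lfloor n(1/\theta-1)\rfloor$ is exactly what makes this exponent $\ge1/\theta$. Your ``Main obstacle'' paragraph acknowledges that the raw annular decay $2^{-i\epsilon}$ is insufficient and that cancellation of the \emph{whole} molecule must be invoked, but the mechanism you describe---using cancellation only ``far from $B_j$'' while handling the near part via $\mathcal{M}^{(\theta)}$ of cancellation-free pieces---does not by itself produce functions $g_{i,j}$ for which $\mathcal{M}^{(\theta)}(g_{i,j})$ has the right size. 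Making the moment-correction explicit (or, equivalently, showing directly that $M_b^{**}(m_j,\psi)(x)\lesssim\|\mathbf{1}_{B_j}\|_X^{-1}\sum_i 2^{-i\tau}[\mathcal{M}(\mathbf{1}_{2^iB_j})(x)]^{1/\theta}$ for some $\tau>0$) is the missing ingredient; once you have it, the rest of your duality argument goes through verbatim.
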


\section{Change of angles in  $X$-tent spaces\label{s3}}

In this section, we establish an estimate on
the change of angles in $X$-tent spaces.
Now we recall the notion of tent spaces associated with $X$.

\begin{definition}\label{cone}
For any $\alpha\in(0,\infty)$ and $x\in\rn$, let
$\Gamma_\alpha(x):=\{(y,t)\in\mathbb{R}_+^{n+1}:\ |x-y|<\alpha t\}$, which is
called the \emph{cone} of aperture $\alpha$
with vertex $x\in\rn$.
\end{definition}

Let $\alpha\in(0,\infty)$. For any measurable function
$F:\ \rr_+^{n+1}:=\rn\times(0,\infty)\to\cc$ and
$x\in\rn$, define
\begin{equation}\label{aa}
\ca^{(\alpha)}(F)(x):=\lf[\int_{\Gamma_\alpha(x)}|F(y,t)|^2\,\frac{dy\,dt}{t^{n+1}}\r]^\frac12,
\end{equation}
where $\Gamma_\alpha(x)$ is as in Definition \ref{cone}.
A measurable function $F$ is said to belong to the \emph{tent space}
$T_2^{p,\alpha}(\rr_+^{n+1})$, with $p\in(0,\infty)$, if $\|F\|_{T_2^{p,\alpha}(\rr_+^{n+1})}:=\|\ca^{(\alpha)}(F)\|_{L^p(\rn)}<\infty$.
Recall that Coifman et al. \cite{cms} introduced the tent space $T_2^{p,\alpha}(\mathbb{R}^{n+1}_+)$
for any $p\in (0,\fz)$ and $\alpha:=1$.
For any given ball quasi-Banach function space $X$,
the $X$-\emph{tent space} $T_X^\alpha(\rr_+^{n+1})$, with aperture $\alpha$,
is defined to be the set of all measurable
functions $F$ such that $\ca^{(\alpha)}(F)\in X$
and naturally equipped with the quasi-norm
$\|F\|_{T_X^{\alpha}(\rr_+^{n+1})}:=\|\ca^{(\alpha)}(F)\|_{X}$.

To prove Theorem \ref{pp}, we need the following inequality on the change of angles
in weighted Lebesgue spaces, which is a part of
\cite[Proposition 3.2]{MPA}.

\begin{lemma}\label{coa}
Let $\alpha,\ \beta\in(0,\infty)$ with $\alpha\le\beta$ and $q\in[1,\infty)$.
If $\omega\in A_q(\rn)$ and $p\in(0,2q]$,
then, for any measurable function
$F$ on $\mathbb{R}_+^{n+1}$,
$$
\int_\rn\lf|\ca^{(\beta)}(F)(x)\r|^p\omega(x)\,dx\leq
C\lf(\frac{\beta}{\alpha}\r)^{nq}\int_\rn\lf|\ca^{(\alpha)}(F)(x)\r|^p\omega(x)\,dx,
$$
where the positive constant $C$ is independent of $\alpha,\,\beta$ and $F$.
\end{lemma}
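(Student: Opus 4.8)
Since Lemma \ref{coa} is (part of) \cite[Proposition 3.2]{MPA}, one legitimate ``proof'' is simply to quote it; the self-contained route I would take is the classical change-of-aperture argument, carried out in the weighted setting. The plan has two steps: an $L^2$ identity that settles the exponent $p=2$ by weight doubling alone, and a good-$\lambda$ (distributional) argument that propagates this to the whole range $p\in(0,2q]$.

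\textbf{Step 1: the case $p=2$.} By Tonelli's theorem, for any aperture $a\in(0,\infty)$ and any $\omega$,
$$\int_\rn\lf[\ca^{(a)}(F)(x)\r]^2\omega(x)\,dx=\int_{\urn}|F(y,t)|^2\,\omega(B(y,at))\,\frac{dy\,dt}{t^{n+1}}.$$
For $\omega\in A_q(\rn)$ and any two balls $B'\subseteq B$ one has $\omega(B)\le[\omega]_{A_q(\rn)}(|B|/|B'|)^{q}\,\omega(B')$ (this is the $A_q(\rn)$ condition via H\"older's inequality, with the case $q=1$ being $A_1(\rn)$ directly); applying it with $B:=B(y,\beta t)$ and $B':=B(y,\alpha t)$ gives $\omega(B(y,\beta t))\le[\omega]_{A_q(\rn)}(\beta/\alpha)^{nq}\omega(B(y,\alpha t))$, whence Lemma \ref{coa} for $p=2$, with constant $[\omega]_{A_q(\rn)}(\beta/\alpha)^{nq}$.

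\textbf{Step 2: from $p=2$ to $p\in(0,2q]$.} Here I would run a good-$\lambda$ argument. Fix $\lambda>0$, put $O_\lambda:=\{x\in\rn:\ \ca^{(\alpha)}(F)(x)>\lambda\}$, fix a small $\gamma\in(0,1)$ to be optimized, and let $O_\lambda^{*}:=\{x\in\rn:\ \cm(\mathbf1_{O_\lambda})(x)>\gamma\}$; since $\omega\in A_q(\rn)$, $\cm$ is of weak type $(q,q)$ with respect to $\omega\,dx$, so $\omega(O_\lambda^{*})\le C\gamma^{-q}\omega(O_\lambda)$. Splitting $F$ according to the Carleson tent $T(O_\lambda^{*}):=\{(y,t)\in\urn:\ B(y,\alpha t)\subseteq O_\lambda^{*}\}$, the part $F\mathbf1_{\urn\setminus T(O_\lambda^{*})}$ is, by the geometry of cones, controlled on $(O_\lambda^{*})^{c}$ pointwise by $C(\beta/\alpha)^{n/2}\lambda$ (any $(y,t)\in\Gamma_\beta(x)$ outside the tent carries only mass already seen at height $\lesssim\lambda$, spread over a ball comparable to $B(x,\beta t)$), while $F\mathbf1_{T(O_\lambda^{*})}$ is estimated in weighted $L^2$ over $O_\lambda^{*}$ using Step 1 (factor $(\beta/\alpha)^{nq}$) together with the tent structure. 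Combining these into a good-$\lambda$ inequality for $\omega(\{\ca^{(\beta)}(F)>C(\beta/\alpha)^{n/2}\lambda\})$, substituting into the layer-cake formula $\int_\rn[\ca^{(\beta)}(F)]^p\omega=p\int_0^\infty\mu^{p-1}\omega(\{\ca^{(\beta)}(F)>\mu\})\,d\mu$, rescaling the height, and optimizing in $\gamma$, one arrives at
$$\int_\rn\lf[\ca^{(\beta)}(F)(x)\r]^p\omega(x)\,dx\le C\lf(\frac{\beta}{\alpha}\r)^{n\max\{p/2,\,q\}}\int_\rn\lf[\ca^{(\alpha)}(F)(x)\r]^p\omega(x)\,dx,$$
with $C$ depending only on $n$, $p$, $q$ and $[\omega]_{A_q(\rn)}$; here $\max\{p/2,q\}=q$ precisely because $p\le2q$, which is where (and why) the hypothesis enters.

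\textbf{Main obstacle.} The delicate point is Step 2: making precise the two geometric/analytic claims — that a cone of aperture $\beta$ meets the complement of the tent over the $\gamma$-expansion $O_\lambda^{*}$ only where $F$ is already ``$\lambda$-small'', contributing the factor $(\beta/\alpha)^{n/2}$, and that the remaining part is absorbed by the weighted $L^2$ estimate of Step 1, contributing $(\beta/\alpha)^{nq}$ — and then running the $\gamma$-optimization carefully enough to see that $p\le2q$ is exactly the range in which the net power of $\beta/\alpha$ stays at $nq$. Step 1 is routine, and (although not what Lemma \ref{coa} itself asserts) the passage from this weighted estimate to the $X$-tent-space statement Theorem \ref{pp} is then immediate via the extrapolation theorem Lemma \ref{thet} applied with exponent $p_0:=\min\{s,2\}\le2$ over the class $A_1(\rn)$.
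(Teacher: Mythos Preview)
The paper does not actually prove Lemma \ref{coa}; it simply quotes \cite[Proposition 3.2]{MPA}, as you yourself note in your first sentence. So there is no ``paper's own proof'' to compare against --- your opening remark that ``one legitimate `proof' is simply to quote it'' already matches the paper exactly.

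Your self-contained sketch is the standard change-of-aperture route (Tonelli for $p=2$ plus a good-$\lambda$ argument for general $p$), and it is indeed the method underlying \cite{MPA}. Step 1 is correct as written. Step 2 has the right architecture --- the enlarged level set $O_\lambda^*$, the weak-$(q,q)$ bound for $\cm$ on $L^q_\omega$, the tent splitting, and the optimization in $\gamma$ --- and your identification of the exponent as $\max\{p/2,q\}=q$ on the range $p\le 2q$ is exactly the right bookkeeping. The one place where your description is a bit loose is the geometric claim about the off-tent piece: the precise statement is not quite that $F\mathbf1_{\urn\setminus T(O_\lambda^*)}$ is pointwise $\lesssim(\beta/\alpha)^{n/2}\lambda$ at the level of $(y,t)$, but rather that for $x\notin O_\lambda^*$ one averages $[\ca^{(\alpha)}(F)]^2$ over the set $B(x,(\alpha+\beta)t)\setminus O_\lambda$ (which has density $\ge 1-\gamma$ in the ball) to control $\ca^{(\beta)}(F)(x)$. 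You flag this yourself as the ``main obstacle,'' so you clearly know where the real work lies; for the purposes of this paper, the citation suffices.
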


Using Lemma \ref{coa} and Theorem \ref{thet}, we have the following estimate on
the change of angles in $X$-tent spaces, which
plays a key role in the proof of Theorem \ref{Thgx} below.

\begin{theorem}\label{pp}
Let $X$ be a ball quasi-Banach function space.
Assume that there exists an $s\in(0,\infty)$ such that $X^{1/s}$ is a ball Banach function space
and $\cm$ is bounded on $(X^{1/s})'$. Then there exists a positive constant $C$ such that,
for any $\alpha\in[1,\infty)$ and any measurable function $F$ on $\mathbb{R}_+^{n+1}$,
\begin{equation}\label{Eqcc}
\lf\|\ca^{(\alpha)}(F)\r\|_X\le C\alpha^{\max\{\frac{n}{2},\frac{n}{s}\}}\lf\|\ca^{(1)}(F)\r\|_X.
\end{equation}
\end{theorem}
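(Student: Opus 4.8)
The plan is to reduce the $X$-norm estimate to a weighted $L^{p_0}$ estimate and then invoke the extrapolation theorem (Lemma \ref{thet}) together with the weighted change-of-angles inequality (Lemma \ref{coa}). First I would fix the exponent: since $X^{1/s}$ is a ball Banach function space and $\cm$ is bounded on $(X^{1/s})'$, the natural choice is $q_0:=s$ in Lemma \ref{thet}, so I will work with $p_0\in(0,s]$ to be chosen at the end. Set $F$ to be an arbitrary measurable function on $\mathbb{R}^{n+1}_+$ and define the pair $(F_1,F_2):=(\ca^{(\alpha)}(F),\,\alpha^{\kappa}\ca^{(1)}(F))$, where $\kappa:=\max\{n/2,n/s\}$ (or, more precisely, the exponent that Lemma \ref{coa} forces, tracked below). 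The goal is exactly $\|F_1\|_X\lesssim\|F_2\|_X$, so by Lemma \ref{thet} it suffices to verify that $(F_1,F_2)$ lies in the family $\mathcal F$ associated with $p_0$, i.e. that for every $\omega\in A_1(\rn)$,
\begin{equation}\label{EqPlanW}
\int_\rn\lf[\ca^{(\alpha)}(F)(x)\r]^{p_0}\omega(x)\,dx\le C_{(p_0,[\omega]_{A_1(\rn)})}\,\alpha^{\kappa p_0}\int_\rn\lf[\ca^{(1)}(F)(x)\r]^{p_0}\omega(x)\,dx.
\end{equation}

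Next I would establish \eqref{EqPlanW} using Lemma \ref{coa} with $\beta:=\alpha\ge1=:\alpha_{\text{old}}$. The subtlety is matching hypotheses: Lemma \ref{coa} requires $\omega\in A_q(\rn)$ for some $q\in[1,\infty)$ and $p_0\in(0,2q]$, and yields a constant $C(\beta/\alpha)^{nq}=C\alpha^{nq}$. Since $\omega\in A_1(\rn)\subset A_q(\rn)$ for every $q\ge1$, I have the freedom to pick $q$ as small as the constraint $p_0\le 2q$ allows, namely $q:=\max\{1,p_0/2\}$. With $p_0\le s\le\cdots$ — here one checks: if $p_0\le 2$ then $q=1$ and the exponent on $\alpha$ is $n$; if $p_0>2$ (which can only happen when $s>2$) then $q=p_0/2$ and the exponent is $nq=np_0/2$. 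Dividing by $p_0$, the exponent on $\alpha$ in the $X$-norm becomes $n/p_0$ in the first case and $n/2$ in the second. Choosing $p_0:=\min\{s,1\}$... — actually the cleanest choice is to take $p_0:=s$ when $s\le 1$ is not available in general since $s\in(0,\infty)$; instead take $p_0$ with $p_0\le s$ and $p_0$ small enough that $n/p_0$ is controlled. One verifies that the optimal bookkeeping gives precisely $\alpha^{\max\{n/2,n/s\}}$: take $p_0:=s$ if $s\le 2$ (then $q=1$, exponent $n/s\ge n/2$... wait, need $n/p_0=n/s$, and this is $\ge n/2$ iff $s\le 2$, consistent), and $p_0:=s$ if $s>2$ as well (then $q=s/2$, $nq/p_0=n/2$). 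So uniformly $p_0:=s$ works and yields exponent $\max\{n/s,n/2\}=\kappa$. Then \eqref{EqPlanW} holds with this $\kappa$.

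Then I would assemble the pieces: with $p_0:=s$ and $q_0:=s$, the hypotheses of Lemma \ref{thet} are met verbatim ($X^{1/s}=X^{1/q_0}$ is a ball Banach function space and $\cm$ is bounded on $(X^{1/q_0})'$ by assumption; note Lemma \ref{thet} as quoted says "Banach function space" but the version actually proved in \cite[Lemma 7.34]{ZWYY} and used here is for ball Banach function spaces, which is what we have). Applying Lemma \ref{thet} to the pair $(\ca^{(\alpha)}(F),\alpha^{\kappa}\ca^{(1)}(F))\in\mathcal F$ gives $\|\ca^{(\alpha)}(F)\|_X\le C\alpha^{\kappa}\|\ca^{(1)}(F)\|_X$, which is \eqref{Eqcc}. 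One should also record that the constant $C$ from Lemma \ref{thet} is independent of $F$ and of $\alpha$: in the definition of $\mathcal F$ the constant $C_{(p_0,[\omega]_{A_1})}$ must be uniform over the pair, and since $\alpha$ enters only through the explicit prefactor $\alpha^{\kappa p_0}$ pulled outside in \eqref{EqPlanW}, the $A_1$-constant of $\omega$ that governs the bound in Lemma \ref{coa} does not depend on $\alpha$ — this is the point that makes the extrapolation legitimate.

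**Main obstacle.** The only genuinely delicate point is the exponent bookkeeping in the previous two paragraphs: choosing $q$ in Lemma \ref{coa} as a function of $p_0$ (to keep the constraint $p_0\le 2q$ while minimizing $nq$), choosing $p_0$ in Lemma \ref{thet} (need $p_0\le q_0=s$), and verifying that the product of the resulting $\alpha$-exponents collapses to exactly $\max\{n/2,n/s\}$ rather than something larger — this requires splitting into the cases $s\le 2$ and $s>2$ and is where an off-by-a-factor error would creep in. Everything else is a direct citation: Lemma \ref{coa} is the weighted input and Lemma \ref{thet} is the machine that upgrades weighted $L^{p_0}$ control to $X$-control.
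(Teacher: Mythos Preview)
Your proposal is correct and follows essentially the same route as the paper: fix $p_0:=s$ and $q_0:=s$ in Lemma~\ref{thet}, split into the cases $s\le 2$ (use $q=1$ in Lemma~\ref{coa}) and $s>2$ (use $q=s/2$ in Lemma~\ref{coa}), and read off the exponent $\max\{n/2,n/s\}$. The paper presents the two cases separately from the start whereas you unify them via $q:=\max\{1,s/2\}$, but the content is identical.
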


\begin{remark}
Assume that $X$ is a ball quasi-Banach function space satisfying Assumption \ref{as1}
with $0<\theta< s\le1$ and Assumption \ref{as2} with some $q\in(1,\infty]$ and the same $s$ as in Assumption \ref{as1}.
In this case, Wang et al. \cite[Lemma 2.20]{WYY} proved that
there exists a positive constant $C$ such that,
for any $\alpha\in[1,\infty)$ and any measurable function $F$ on $\mathbb{R}_+^{n+1}$,
$$
\lf\|\ca^{(\alpha)}(F)\r\|_X\le C\alpha^{\max\{\frac{n}{2}-\frac{n}{q}
+\frac{n}{\theta},\frac{n}{\theta}\}}\lf\|\ca^{(1)}(F)\r\|_X.
$$
Compared with the assumptions and the conclusions of \cite[Lemma 2.20]{WYY},
the assumptions in Theorem \ref{pp} are much weaker and the conclusions in Theorem \ref{pp} are more refined.
We should also point out that, in the case of $X:=L^p(\rn)$ with $p\in(0,\infty)$,
Theorem \ref{pp} coincides with the classical conclusions in \cite[Theorem 1.1]{A}.
\end{remark}

\begin{proof}[Proof of Theorem \ref{pp}]
Let $X$ be a ball quasi-Banach function space and $s\in(0,\infty)$.
Assume that $X^{1/s}$ is a ball Banach function space
and $\cm$ is bounded on $(X^{1/s})'$.
To show \eqref{Eqcc}, we consider the following two cases on $s$.

If $s\in(0,2]$, let
$$
\cf:=\left\{\left(\ca^{(\alpha)}(F),
\alpha^{\frac{n}{s}}\ca^{(1)}(F)\right):\ \alpha\in[1,\infty),\ F\in\mathscr M(\rr_+^{n+1})\right\}.
$$
Then, by Lemma \ref{coa}, we know that, for any given $\omega\in A_1(\rn)$
and for any
$(\ca^{(\alpha)}(F),
\alpha^{\frac{n}{s}}\ca^{(1)}(F))\in\mathcal{F}$,
$$
\int_{\rn}\lf|\ca^{(\alpha)}(F)(x)\r|^s\omega(x)\,dx\lesssim
\alpha^{n}\int_{\rn}\lf|\ca^{(1)}(F)(x)\r|^s\omega(x)\,dx
\sim\int_{\rn}\lf|\alpha^{\frac{n}{s}}\ca^{(1)}(F)(x)\r|^s\omega(x)\,dx,
$$
which, together with the assumptions that $X^{1/s}$ is a ball Banach function space and
$\cm$ is bounded on $(X^{1/s})'$, and Lemma \ref{thet}, further implies that,
for any $\alpha\in[1,\infty)$ and $F\in\mathscr M(\rr_+^{n+1})$,
\begin{equation}\label{eq3.5}
\lf\|\ca^{(\alpha)}(F)\r\|_{X}\lesssim\alpha^{\frac{n}{s}}\lf\|\ca^{(1)}(F)\r\|_{X}.
\end{equation}
This is the desired estimate.

If $s\in(2,\infty)$, let
$$
\cf:=\left\{\left(\ca^{(\alpha)}(F),\alpha^{\frac{n}{2}}\ca^{(1)}(F)\right):\
\alpha\in(1,\infty),\ F\in\mathscr M(\rr_+^{n+1})\right\}.
$$
Since $s/2>1$, by the definition of $A_p(\rn)$ weights,
we know that, for any $\omega\in A_1(\rn)$, $\omega\in A_{s/2}(\rn)$ holds true.
From this and Lemma \ref{coa}, we easily deduce that, for any given $\omega\in A_1(\rn)$
and for any
$(\ca^{(\alpha)}(F),
\alpha^{\frac{n}{2}}\ca^{(1)}(F))\in\mathcal{F}$,
$$
\int_\rn\left|\ca^{(\alpha)}(F)(x)\right|^s\omega(x)\,dx\lesssim \alpha^{\frac{ns}{2}}\int_\rn\lf|\ca^{(1)}(F)(x)\r|^s\omega(x)\,dx
\sim\int_\rn\lf|\alpha^{\frac{n}{2}}\ca^{(1)}(F)(x)\r|^s\omega(x)\,dx,
$$
which, combined with the assumptions that $X^{1/s}$ is a ball Banach function space and
$\cm$ is bounded on $(X^{1/s})'$, and Lemma \ref{thet}, further implies that,
for any $\alpha\in[1,\infty)$ and $F\in\mathscr M(\rr_+^{n+1})$,
\begin{equation}\label{eq3.6}
\lf\|\ca^{(\alpha)}(F)\r\|_{X}\lesssim \alpha^{\frac{n}{2}}\lf\|\ca^{(1)}(F)\r\|_{X}.
\end{equation}
By \eqref{eq3.5} and \eqref{eq3.6}, we conclude that, for any $\alpha\in[1,\infty)$ and $F\in\mathscr M(\rr_+^{n+1})$,
$$
\lf\|\ca^{(\alpha)}(F)\r\|_X\lesssim\alpha^{\max\{\frac{n}{2},\frac{n}{s}\}}\lf\|\ca^{(1)}(F)\r\|_X,
$$
which is also the desired estimate and hence then
completes the proof of Theorem \ref{pp}.
\end{proof}

\section{Littlewood--Paley function characterizations\label{s4}}

In this section,
we establish various Littlewood--Paley function characterizations of $H_X(\rn)$,
including its characterizations via the Lusin area function, the Littlewood--Paley
$g$-function and the Littlewood--Paley $g_\lambda^\ast$-function, respectively,
in \S\ref{s4.1}, \S\ref{s4.2} and \S\ref{s4.3} below.

In what follows, the symbol $\vec 0_n$ denotes the \emph{origin} of $\rn$ and,
for any $\phi\in\cs(\rn)$, $\widehat\phi$ denotes its \emph{Fourier transform}
which is defined by setting, for any $\xi\in\rn$,
$$
\widehat\phi(\xi):=\int_\rn e^{-2\pi ix\xi}\phi(x)\,dx.
$$
For any $f\in\cs'(\rn)$, $\widehat f$ is defined by setting, for any $\varphi\in\mathcal{S}(\rn)$,
$\la\widehat f,\varphi\ra:=\la f,\widehat\varphi\ra$; also, for any $f\in\mathcal{S}(\rn)$
[resp., $\mathcal{S}'(\rn)$],
$f^{\vee}$ denotes its \emph{inverse Fourier transform} which is defined by setting,
for any $\xi\in\rn$, $f^{\vee}(\xi):=\widehat{f}(-\xi)$ [resp., for any $\varphi\in\mathcal{S}(\rn)$,
$\la f^{\vee},\varphi\ra:=\la f,\varphi^{\vee}\ra$].

\begin{definition}\label{de4.1}
Let $\varphi\in\cs(\rn)$ satisfy $\widehat{\varphi}(\vec0_n)=0$ and
assume that, for any $\xi\in\rn\setminus\{\vec0_n\}$,
there exists a $t\in(0,\infty)$ such that $\widehat\varphi(t\xi)\neq0$.
For any distribution $f\in\cs'(\rn)$, the \emph{Lusin-area function} $S(f)$
and the \emph{Littlewood-Paley} $g_\lambda^\ast$-\emph{function} $g_\lambda^\ast(f)$ with
any given $\lambda\in(0,\infty)$ are defined, respectively, by setting, for any $x\in\rn$,
\begin{equation}\label{deaf}
S(f)(x):=\lf\{\int_{\Gamma(x)}\lf|\varphi_t\ast f(y)\r|^2\,\frac{dy\,dt}{t^{n+1}}\r\}^\frac12
\end{equation}
and
\begin{equation}\label{d22}
g_\lambda^\ast(f)(x):=\lf\{\int_0^\infty\int_\rn\lf(\frac{t}{t+|x-y|}\r)^{\lambda n}\lf|\varphi_t\ast f(x)\r|^2\,\frac{dy\,dt}{t^{n+1}}\r\}^\frac12,
\end{equation}
where, for any $x\in\rn$, $\Gamma(x)$ is as in Definition \ref{cone} and, for any $t\in(0,\infty)$ and $x\in\rn$,
$\varphi_t(x):=t^{-n}\varphi(x/t)$.
\end{definition}

\begin{definition}\label{de4.2}
Let $\varphi\in\mathcal{S}(\rn)$ satisfy $\widehat{\varphi}(\vec 0_n)=0$
and assume that, for any $x\in\rn\setminus\{\vec 0_n\}$,
there exists a $j\in \zz$ such that $\widehat\varphi(2^{j}x)\not=0$.
For any $f\in\cs'(\rn)$, the \emph{Littlewood--Paley $g$-function $g(f)$}
is defined by setting, for any $x\in\rn$,
\begin{equation}\label{degf}
g(f)(x):=\lf[\int_0^\infty|f\ast\varphi_t(x)|^2\,\frac{dt}{t}\r]^{1/2}.
\end{equation}
\end{definition}
\begin{remark}\label{wa}
\begin{itemize}
\item[(i)]
The way to define the Littlewood--Paley functions in Definitions \ref{de4.1} and \ref{de4.2}
is the same as in \cite[(3.2), (3.3) and (3.4)]{WYYZ}.
Observe that,
in Definitions \ref{de4.1} and \ref{de4.2}, we did not assume that $\varphi$ is radial and
has compact support and hence, compared with the assumptions required in \cite{HYY,HLYY,LHY,NS,ZSY,ZYYW},
the assumptions here in both cases are quite weaker.

\item[(ii)] In all these Littlewood--Paley function characterizations of $H_X(\rn)$,
we only need $\widehat{\varphi}(\vec 0_n)=0$, namely, $\varphi$ has a zero order vanishing moment.
Compared with
all the known results on the Littlewood--Paley function characterizations on function
spaces (see, for instance, \cite{HYY,HLYY,LHY,NS,ZSY,ZYYW}), this assumption on the vanishing moment of $\varphi$ is also minimal.
\end{itemize}
\end{remark}

\subsection{Characterization by the Lusin area function\label{s4.1}}

In this subsection, borrowing some ideas from the proof of \cite[Theorem 3.12]{WYYZ},
we characterize the Hardy type space $H_X(\rn)$ by the Lusin-area function.

Let $\alpha\in(0,\infty)$. For any ball $B(x,r)\subset\rn$ with $x\in\rn$ and $r\in(0,\infty)$, let
$$
T_\alpha(B):=\lf\{(y,t)\in\rr^{n+1}_+:\ 0<t<\frac{r}{\alpha},\ |y-x|<r-\alpha t\r\}.
$$
When $\alpha=1$, we denote $T_\alpha(B)$ simply by $T(B)$.

\begin{definition}\label{1q1}
Let $X$ be a ball quasi-Banach function space, $p\in(1,\infty)$ and $\alpha\in(0,\infty)$.
A measurable function $a:\ \rr_+^{n+1}\to\cc$ is called a
\emph{$(T_X,p)$-atom}, supported in $T(B)$, if there
exists a ball $B\subset\rn$ such that
\begin{itemize}
\item[(i)] $\supp(a):=\{(x,t)\in\rr_+^{n+1}:\ a(x,t)\neq0\}\subset T(B)$,
\item[(ii)] $\|a\|_{T_2^{p,1}(\rr_+^{n+1})}\le|B|^{1/p}/\|\mathbf{1}_B\|_{X}$.
\end{itemize}
Moreover, if $a$ is a $(T_X,p)$-atom for any $p\in(1,\infty)$,
then $a$ is called a \emph{$(T_X,\infty)$-atom}.
\end{definition}

To establish the Lusin area function characterization of $H_X(\rn)$, we need the
following lemma which is just \cite[Proposition 4.9]{SHYY}.

\begin{lemma}\label{lt}
Let $F:\ \rr^{n+1}_+\to\cc$ be a measurable function.
Assume that $X$ is a ball quasi-Banach function space satisfying Assumptions \ref{as1}
and \ref{as2} with the same $s\in(0,1]$.
Then $f\in T_X^1(\rr^{n+1}_+)$ if and only if there exist a sequence $\{\lambda_j\}_{j=1}^\infty\subset [0,\infty)$
and a sequence $\{A_j\}_{j=1}^\infty$ of $(T_X^1,\infty)$-atoms supported, respectively, in
$\{T(B_j)\}_{j=1}^\infty$ such that, for almost every $(x,t)\in\rr_+^{n+1}$,
\begin{equation*}
F(x,t)=\sum_{j=1}^\infty\lambda_jA_j(x,t)\quad\text{and}\quad|F(x,t)|=\sum_{j=1}^\infty\lambda_j|A_j(x,t)|
\end{equation*}
pointwisely, and
$$
\lf\|\lf\{\sum_{j=1}^\infty\lf(\frac{\lambda_j}{\|\mathbf1_{B_j}\|_X}\r)^s\mathbf1_{B_j}\r\}^{1/s}\r\|_X<\infty.
$$
Moreover,
$$
\|f\|_{T_X^1(\rr_+^{n+1})}\sim\Lambda\lf(\{\lambda_jA_j\}_{j\in\nn}\r)
$$
where the positive equivalence constants are independent of $f$, but depend on $s$.
\end{lemma}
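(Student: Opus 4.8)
The plan is to prove the two implications of the claimed equivalence separately, together with the norm comparison. The synthesis direction (assuming the decomposition, deduce $F\in T_X^1(\rr_+^{n+1})$ with $\|F\|_{T_X^1(\rr_+^{n+1})}\ls\Lambda(\{\lambda_jA_j\}_{j\in\nn})$, where I recall $\Lambda(\{\lambda_jA_j\}_{j\in\nn})=\|\{\sum_j(\lambda_j/\|\mathbf1_{B_j}\|_X)^s\mathbf1_{B_j}\}^{1/s}\|_X$) is the softer part and rests on Assumption \ref{as2}; the decomposition direction (from $F\in T_X^1(\rr_+^{n+1})$ produce the atoms) is the technical heart and rests on Assumption \ref{as1} together with the classical Coifman--Meyer--Stein tent geometry. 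I expect the latter to be the main obstacle.

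For the synthesis direction I first record the geometric fact that, if $\supp A_j\st T(B_j)$ with $B_j=B(c_j,r_j)$, then $\ca^{(1)}(A_j)$ is supported in $B_j$: indeed, any $(y,t)\in\Gamma(x)\cap T(B_j)$ gives $|x-c_j|\le|x-y|+|y-c_j|<t+(r_j-t)=r_j$. Next, since $\ca^{(1)}(F)(x)$ is literally the $L^2(\Gamma(x),t^{-n-1}\,dy\,dt)$-norm of $F=\sum_j\lambda_jA_j$, the Minkowski inequality gives $\ca^{(1)}(F)\le\sum_j\lambda_j\ca^{(1)}(A_j)$ pointwise, whence, using $s\in(0,1]$ and the elementary subadditivity $(\sum_ja_j)^s\le\sum_ja_j^s$ for nonnegative $a_j$,
\[
\lf[\ca^{(1)}(F)\r]^s\le\sum_j\lambda_j^s\lf[\ca^{(1)}(A_j)\r]^s
\quad\text{pointwise.}
\]
Because $X^{1/s}$ is a ball Banach function space, its norm admits the dual representation $\|h\|_{X^{1/s}}=\sup\{\|hg\|_{L^1(\rn)}:\ g\in(X^{1/s})',\ \|g\|_{(X^{1/s})'}=1\}$ via \eqref{asso} and Lemma \ref{Lesdual}. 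Fixing such a $g\ge0$, picking the exponent $q\in(s,\infty)$ furnished by Assumption \ref{as2}, and applying the H\"older inequality on each $B_j$ with exponents $q/s$ and $(q/s)'$ together with the atom bound $\|\ca^{(1)}(A_j)\|_{L^q(B_j)}\le|B_j|^{1/q}\|\mathbf1_{B_j}\|_X^{-1}$, one gets (the powers of $|B_j|$ cancelling since $s/q+1/(q/s)'=1$)
\[
\int_{B_j}\lf[\ca^{(1)}(A_j)\r]^s g
\le\frac{1}{\|\mathbf1_{B_j}\|_X^s}\int_\rn\mathbf1_{B_j}\,\cm^{((q/s)')}(g).
\]
Summing in $j$, applying the H\"older inequality in the pair $(X^{1/s},(X^{1/s})')$, and invoking the boundedness of $\cm^{((q/s)')}$ on $(X^{1/s})'$ from Assumption \ref{as2}, I obtain $\|[\ca^{(1)}(F)]^s\|_{X^{1/s}}\ls\|\sum_j(\lambda_j/\|\mathbf1_{B_j}\|_X)^s\mathbf1_{B_j}\|_{X^{1/s}}$; taking the $1/s$-th power and using Definition \ref{Debf} yields exactly $\|\ca^{(1)}(F)\|_X\ls\Lambda(\{\lambda_jA_j\}_{j\in\nn})$. (When $q=\infty$ one runs the same computation with $\cm$ in place of $\cm^{((q/s)')}$.)

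For the decomposition direction I would follow the Coifman--Meyer--Stein construction adapted to $X$. Assume $\ca^{(1)}(F)\in X$ is not identically $0$, and for each $k\in\zz$ put $O_k:=\{x\in\rn:\ \ca^{(1)}(F)(x)>2^k\}$ and its enlargement $O_k^*:=\{x\in\rn:\ \cm(\mathbf1_{O_k})(x)>1/2\}$. Taking a Whitney decomposition $\{B_{k,i}\}_i$ of each open set $O_k^*$ (with bounded overlap and $\dist(B_{k,i},(O_k^*)^\complement)\sim\mathrm{rad}(B_{k,i})$) and the associated tents, one partitions the set $\bigcup_k(\widehat{O_k^*}\setminus\widehat{O_{k+1}^*})$ --- which carries $F$ up to a null set --- into pieces $\Delta_{k,i}$, and sets $A_{k,i}:=\lambda_{k,i}^{-1}F\mathbf1_{\Delta_{k,i}}$ with $\lambda_{k,i}:=C2^k\|\mathbf1_{B_{k,i}}\|_X$. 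The crucial geometric estimate is that each restricted piece obeys the height bound $\ca^{(1)}(F\mathbf1_{\Delta_{k,i}})\ls 2^k\mathbf1_{cB_{k,i}}$ in $L^\infty(\rn)$ (being outside $\widehat{O_{k+1}^*}$ forces the relevant cone averages to stay $\ls2^{k+1}$), which yields $\|\ca^{(1)}(A_{k,i})\|_{L^q(\rn)}\ls|B_{k,i}|^{1/q}\|\mathbf1_{B_{k,i}}\|_X^{-1}$ for every $q\in(1,\infty)$, so that each $A_{k,i}$ is a $(T_X^1,\infty)$-atom; the two pointwise identities for $F$ and $|F|$ in the statement then follow from the disjointness of the $\Delta_{k,i}$. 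Finally, for the norm control $\Lambda(\{\lambda_{k,i}A_{k,i}\}_{k,i})\ls\|F\|_{T_X^1(\rr_+^{n+1})}$, I would use the bounded overlap $\sum_i\mathbf1_{B_{k,i}}\ls\mathbf1_{O_k^*}$ to reduce to $\|\{\sum_k2^{ks}\mathbf1_{O_k^*}\}^{1/s}\|_X$, pass from $O_k^*$ back to $O_k$ by the Fefferman--Stein inequality of Assumption \ref{as1} after observing $\mathbf1_{O_k^*}\ls\cm^{(\theta)}(\mathbf1_{O_k})$, and conclude via the elementary pointwise identity $\{\sum_k2^{ks}\mathbf1_{O_k}(x)\}^{1/s}\sim\ca^{(1)}(F)(x)$ (a geometric sum, valid as $s>0$). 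The main obstacle is precisely this decomposition direction: setting up the Whitney/tent geometry so that the $\Delta_{k,i}$ both cover $\supp F$ and enjoy bounded overlap, and then proving the uniform height bound on $\ca^{(1)}(F\mathbf1_{\Delta_{k,i}})$ --- the genuinely $X$-dependent input being the use of Assumption \ref{as1} to absorb the maximal-function enlargement $O_k\mapsto O_k^*$.
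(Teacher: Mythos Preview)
The paper does not prove this lemma at all: it is stated with the sentence ``which is just \cite[Proposition 4.9]{SHYY}'' and no argument is given. Your sketch is therefore not competing against anything in the present article, but rather reconstructing the content of the cited reference.

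That said, your outline is exactly the standard Coifman--Meyer--Stein tent-space atomic decomposition transported to the $X$-setting, which is indeed how \cite[Proposition 4.9]{SHYY} proceeds. The synthesis direction via the duality $X^{1/s}\leftrightarrow(X^{1/s})'$ and Assumption~\ref{as2} is correct as written. For the decomposition direction the level-set/Whitney construction you describe is the right one, and your identification of the two genuinely $X$-dependent inputs --- the $L^\infty$ height bound on $\ca^{(1)}(F\mathbf1_{\Delta_{k,i}})$ coming from $\Delta_{k,i}\cap\widehat{O_{k+1}^*}=\emptyset$, and the passage $O_k^*\to O_k$ via $\mathbf1_{O_k^*}\ls\cm^{(\theta)}(\mathbf1_{O_k})$ together with Assumption~\ref{as1} --- is accurate. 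One technical point you gloss over: to run the Whitney decomposition on $O_k^*$ you need $O_k^*\neq\rn$, equivalently $|O_k|<\infty$, which is not automatic for an arbitrary ball quasi-Banach $X$; in \cite{SHYY} this is handled by first treating $F$ with $\ca^{(1)}(F)\in L^2(\rn)\cap X$ (where the level sets are trivially of finite measure) and then passing to general $F$ by a density/truncation argument.
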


Combining Calder\'on \cite[Lemma 4.1]{C1975} and Folland and Stein \cite[Theorem 1.64]{FoS}
(see also \cite[Lemma 4.6]{YYYZ}),
the following Calder\'on reproducing formula was obtained in \cite[Lemma 4.4]{ZWYY}.
Recall that $f\in\cs'(\rn)$ is said to \emph{vanish weakly at infinity} if, for any $\phi\in\cs(\rn)$,
$f\ast\phi_t\to0$ in $\cs'(\rn)$ as $t\to\infty$ (see, for instance, \cite[p.\,50]{FoS}),
here and thereafter, for any $t\in(0,\infty)$ and function $\phi$ on $\rn$, we always let
$\phi_t(\cdot):=t^{-n}\phi(\cdot/t)$.
In what follows, the \emph{symbol $\epsilon\to0^+$} means that
$\epsilon\in(0,\infty)$ and $\epsilon\to0$, and
the \emph{symbol $C_c^\fz(\rn)$} denotes the set of all infinitely differentiable
functions with compact supports.

\begin{lemma}\label{Le47}
Let $\phi$ be a Schwartz function and assume that, for any $x\in\rn\setminus\{\vec 0_n\}$,
there exists a $t\in (0,\fz)$ such that $\widehat\phi(tx)\not=0$. Then there
exists a $\psi\in\cs(\rn)$ such that $\wh\psi\in C^\fz_c(\rn)$ with its support
away from $\vec 0_n$, $\wh\phi\wh\psi\ge 0$ and, for any $x\in\rn\setminus\{\vec 0_n\}$,
$$\int^\fz_0\wh\phi(tx)\wh\psi(tx)\,\frac {dt}t=1.$$
Moreover, for any $f\in\cs'(\rn)$, if $f$ vanishes weakly at infinity,
then
$$
f=\int_0^\infty f\ast\phi_t\ast\psi_t\,\frac{dt}{t}\quad\text{in}\quad\cs'(\rn),
$$
namely,
$$
f=\lim_{\substack{\epsilon\to0^+\\ A\to\infty}}
\int_\epsilon^A f\ast\phi_t\ast\psi_t\,\frac{dt}{t}\quad\text{in}\quad\cs'(\rn).
$$
\end{lemma}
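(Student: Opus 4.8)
The plan is to build the companion function $\psi$ by a compactness argument on the unit sphere $\mathbb{S}^{n-1}$ and then to deduce the reproducing formula from a telescoping identity on the Fourier side, the hypothesis that $f$ vanishes weakly at infinity entering only at the very last step; the first step is in essence Calder\'on's lemma \cite[Lemma 4.1]{C1975} and the second is in the style of Folland--Stein \cite[Theorem 1.64]{FoS}.

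First I would construct $\psi$. For each $\omega\in\mathbb{S}^{n-1}$, the hypothesis and the continuity of $(t,\omega')\mapsto\widehat\phi(t\omega')$ on $(0,\fz)\times\mathbb{S}^{n-1}$ furnish a $t_\omega\in(0,\fz)$, an interval $I_\omega\ni t_\omega$ with $\overline{I_\omega}\subset(0,\fz)$, and a small geodesic ball $W_\omega\ni\omega$ in $\mathbb{S}^{n-1}$ such that $\widehat\phi(t\omega')\neq0$ whenever $t\in I_\omega$ and $\omega'\in W_\omega$. Since $\mathbb{S}^{n-1}$ is compact, finitely many $W_{\omega_1},\dots,W_{\omega_N}$ cover it; let $\{\chi_k\}_{k=1}^N$ be a smooth partition of unity on $\mathbb{S}^{n-1}$ subordinate to this cover, extended to $\rn\setminus\{\vec0_n\}$ by homogeneity of degree $0$, and choose $\eta_k\in C_c^\fz(I_{\omega_k})$ with $\eta_k\ge0$ and $\int_0^\fz\eta_k(t)\,dt/t>0$. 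On each set $U_k:=\{s\omega':\ s\in I_{\omega_k},\ \omega'\in W_{\omega_k}\}$, which is diffeomorphic to $I_{\omega_k}\times W_{\omega_k}$ and hence simply connected, the smooth non-vanishing function $\widehat\phi$ admits a smooth polar decomposition $\widehat\phi=|\widehat\phi|\,e^{i\theta_k}$, and I set
$$
\widehat{\psi_0}(\xi):=\sum_{k=1}^N\chi_k(\xi/|\xi|)\,\eta_k(|\xi|)\,e^{-i\theta_k(\xi)},
$$
extended by zero outside $\bigcup_{k=1}^N U_k$; the cutoffs $\chi_k,\eta_k$ make this a function in $C_c^\fz(\rn)$ supported away from $\vec0_n$, and clearly $\widehat\phi(\xi)\widehat{\psi_0}(\xi)=\sum_{k=1}^N\chi_k(\xi/|\xi|)\,\eta_k(|\xi|)\,|\widehat\phi(\xi)|\ge0$. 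The function $g(\xi):=\int_0^\fz\widehat\phi(t\xi)\widehat{\psi_0}(t\xi)\,dt/t$ is smooth on $\rn\setminus\{\vec0_n\}$ and homogeneous of degree $0$; on $\mathbb{S}^{n-1}$ it is a sum of nonnegative terms, any of which indexed by a $k$ with $\chi_k(\omega)>0$ is strictly positive, so $g\ge c_0>0$ on $\mathbb{S}^{n-1}$ and $1/g$ is smooth on $\rn\setminus\{\vec0_n\}$. Hence $\widehat\psi:=\widehat{\psi_0}/g\in C_c^\fz(\rn)$ has support away from $\vec0_n$, $\psi:=(\widehat\psi)^{\vee}\in\cs(\rn)$, $\widehat\phi\,\widehat\psi=\widehat\phi\,\widehat{\psi_0}/g\ge0$, and, by the degree-$0$ homogeneity of $g$,
$$
\int_0^\fz\widehat\phi(tx)\widehat\psi(tx)\,\frac{dt}{t}=\frac{1}{g(x)}\int_0^\fz\widehat\phi(tx)\widehat{\psi_0}(tx)\,\frac{dt}{t}=1\qquad\text{for every }x\in\rn\setminus\{\vec0_n\}.
$$

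Next I would prove the reproducing formula. Put $\Theta:=\phi\ast\psi$, so that $\Theta\in\cs(\rn)$, $\widehat\Theta=\widehat\phi\,\widehat\psi\in C_c^\fz(\rn)$ is supported in an annulus $\{a\le|\xi|\le b\}$ with $0<a<b<\fz$, $f\ast\phi_t\ast\psi_t=f\ast\Theta_t$, and $\int_0^\fz\widehat\Theta(tx)\,dt/t=1$ for $x\neq\vec0_n$. Define $\Phi\in\cs(\rn)$ by $\widehat\Phi(\xi):=\int_1^\fz\widehat\Theta(s\xi)\,ds/s$; from the location of $\supp\widehat\Theta$ one checks that $\widehat\Phi\in C_c^\fz(\rn)$, that $\widehat\Phi\equiv1$ on $\{|\xi|<a\}$ (in particular $\widehat\Phi(\vec0_n)=1$), and that $\widehat\Phi(t\xi)=\int_t^\fz\widehat\Theta(s\xi)\,ds/s$, whence $\widehat\Phi(\epsilon\xi)-\widehat\Phi(A\xi)=\int_\epsilon^A\widehat\Theta(s\xi)\,ds/s$ for all $0<\epsilon<A<\fz$ and $\xi\in\rn$. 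On the level of Schwartz functions this reads $\int_\epsilon^A\Theta_t\,dt/t=\Phi_\epsilon-\Phi_A$, so that, testing against $h\in\cs(\rn)$ and applying Fubini on the compact interval $[\epsilon,A]$ together with the identity $\la f\ast g,h\ra=\la f,\wz g\ast h\ra$ (where $\wz g(\cdot):=g(-\cdot)$),
$$
\lf\langle\int_\epsilon^A f\ast\Theta_t\,\frac{dt}{t},\,h\r\rangle=\lf\langle f,\int_\epsilon^A\wz\Theta_t\ast h\,\frac{dt}{t}\r\rangle=\la f\ast\Phi_\epsilon,h\ra-\la f\ast\Phi_A,h\ra,
$$
that is, $\int_\epsilon^A f\ast\Theta_t\,dt/t=f\ast\Phi_\epsilon-f\ast\Phi_A$ in $\cs'(\rn)$. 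Since $\widehat\Phi\equiv1$ near $\vec0_n$, the standard approximation-to-the-identity argument gives $\wz\Phi_\epsilon\ast h\to h$ in $\cs(\rn)$ as $\epsilon\to0^+$ for every $h\in\cs(\rn)$, hence $f\ast\Phi_\epsilon\to f$ in $\cs'(\rn)$; and since $f$ vanishes weakly at infinity, $f\ast\Phi_A\to0$ in $\cs'(\rn)$ as $A\to\fz$. Letting $\epsilon\to0^+$ and $A\to\fz$ then yields $f=\int_0^\fz f\ast\phi_t\ast\psi_t\,dt/t$ in $\cs'(\rn)$, which is the asserted formula.

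The main obstacle is the construction in the first step: one must arrange the local polar decompositions of $\widehat\phi$ and the partition of unity so that $\widehat{\psi_0}$ is genuinely smooth, compactly supported away from $\vec0_n$, satisfies $\widehat\phi\,\widehat{\psi_0}\ge0$ pointwise, and has its dilation-invariant average $g$ bounded below on $\mathbb{S}^{n-1}$; the strict positivity of $g$ is exactly what legitimizes the normalization $\widehat\psi=\widehat{\psi_0}/g$ and forces $\psi\in\cs(\rn)$. By comparison, the second step is essentially formal once the telescoping function $\Phi$ is in hand, its only delicate points being the distributional Fubini argument and the recognition that ``vanishing weakly at infinity'' is precisely the hypothesis needed to discard $f\ast\Phi_A$.
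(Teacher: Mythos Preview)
Your proof is correct and follows precisely the route the paper indicates: the paper does not supply its own argument but cites \cite[Lemma 4.4]{ZWYY}, which in turn combines Calder\'on's construction \cite[Lemma 4.1]{C1975} with the Folland--Stein reproducing formula \cite[Theorem 1.64]{FoS}, and your two steps are a faithful, detailed execution of exactly these two ingredients. In particular your compactness/partition-of-unity construction of $\psi$ with the local polar decompositions is the Calder\'on step, and your telescoping via $\Phi$ together with the use of ``vanishing weakly at infinity'' to kill $f\ast\Phi_A$ is the Folland--Stein step.
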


To obtain the Lusin area function characterization of $H_X(\rn)$, we also need the
following technical lemma.

\begin{lemma}\label{embed}
Let $X$ be a ball quasi-Banach function space.
Assume that there exists an $s\in(0,\infty)$ such that $X^{1/s}$ is a ball Banach function space
and $\cm$ is bounded on $(X^{1/s})'$.
Then there exists an $\epsilon\in(0,1)$ such that
$X$
continuously embeds into $L_\omega^s(\rn)$ with $\omega:=[\cm(\mathbf1_{B(\vec0_n,1)})]^\epsilon$.
\end{lemma}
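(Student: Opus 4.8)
The goal is to embed $X$ continuously into a weighted Lebesgue space $L_\omega^s(\rn)$ with $\omega:=[\cm(\mathbf1_{B(\vec0_n,1)})]^\epsilon$ for some $\epsilon\in(0,1)$, exploiting only that $X^{1/s}$ is a ball Banach function space on which the Hardy--Littlewood maximal operator acts boundedly on the associate space $(X^{1/s})'$. The natural strategy is duality: since $X^{1/s}$ is a ball Banach function space, Lemma \ref{Lesdual} gives $(X^{1/s})''=X^{1/s}$, so for any $f\in X$ we have
$$
\|f\|_X^{1/s}=\big\||f|^{1/s}\big\|_{X^{1/s}}=\sup\lf\{\big\||f|^{1/s}g\big\|_{L^1(\rn)}:\ g\in(X^{1/s})',\ \|g\|_{(X^{1/s})'}=1\r\}.
$$
Thus it suffices to produce, for a fixed admissible $\epsilon$, a single function $g\in(X^{1/s})'$ with controlled norm for which $\||f|^{1/s}g\|_{L^1}\gtrsim \||f|^{1/s}\,\omega^{1/s}\|_{L^1}$; equivalently, it suffices to show $\omega^{1/s}\in(X^{1/s})'$ with $\|\omega^{1/s}\|_{(X^{1/s})'}<\infty$, since then
$$
\int_\rn|f(x)|\,\omega(x)\,dx=\big\||f|^{1/s}\,\omega^{1/s}\big\|_{L^1(\rn)}^s\le\big(\big\||f|^{1/s}\big\|_{X^{1/s}}\big\|\omega^{1/s}\big\|_{(X^{1/s})'}\big)^s=\|\omega^{1/s}\|_{(X^{1/s})'}^s\,\|f\|_X,
$$
which is exactly the claimed continuous embedding $X\hookrightarrow L_\omega^s(\rn)$.

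So the crux reduces to: \emph{find $\epsilon\in(0,1)$ such that $[\cm(\mathbf1_{B(\vec0_n,1)})]^{\epsilon/s}\in(X^{1/s})'$}. Here I would invoke the classical Coifman--Rochberg construction of $A_1$-weights: for any nonnegative locally integrable $h$ and any $\delta\in(0,1)$, the function $[\cm(h)]^\delta$ is an $A_1(\rn)$-weight with $[\cm(h)]^\delta$-constant bounded by an absolute constant depending only on $\delta$ (and $n$). Applying this with $h:=\mathbf1_{B(\vec0_n,1)}$ and $\delta:=\epsilon$, we get $\omega\in A_1(\rn)$ for every $\epsilon\in(0,1)$. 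But I need membership of $\omega^{1/s}$ in $(X^{1/s})'$, which is precisely a statement about boundedness of $\cm$ on $(X^{1/s})'$: since $\cm$ is bounded on the ball Banach function space $Y:=(X^{1/s})'$ and $\mathbf1_{B(\vec0_n,1)}\in Y$ (by Definition \ref{Debqfs}(iv)), we have $\cm(\mathbf1_{B(\vec0_n,1)})\in Y$, hence $\cm(\mathbf1_{B(\vec0_n,1)})\in Y$. The remaining gap is to pass from $\cm(\mathbf1_{B(\vec0_n,1)})\in Y$ to $[\cm(\mathbf1_{B(\vec0_n,1)})]^{1/s}\in Y$, i.e. to lower the power by a factor $1/s$; this is handled by choosing $\epsilon$ small: write $\omega^{1/s}=[\cm(\mathbf1_{B(\vec0_n,1)})]^{\epsilon/s}$, and observe that on the bounded region $B(\vec0_n,1)$ one has $\cm(\mathbf1_{B(\vec0_n,1)})\sim1$, while outside, $\cm(\mathbf1_{B(\vec0_n,1)})(x)\sim|x|^{-n}$, so $\omega^{1/s}(x)\sim(1+|x|)^{-n\epsilon/s}$, which decays; choosing $\epsilon$ small enough (so that $n\epsilon/s<1$, say) makes $\omega^{1/s}$ comparable to $[\cm(\mathbf1_{B(\vec0_n,1)})]^{\epsilon/s}$ with $\epsilon/s<1$, and then $[\cm(\mathbf1_{B(\vec0_n,1)})]^{\epsilon/s}\in A_1(\rn)$ again by Coifman--Rochberg. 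Finally, $A_1$-weights $v$ satisfy $v\le\cm(v)$ pointwise up to the $A_1$-constant; applying this with $v:=\omega^{1/s}$ and using $\omega^{1/s}\lesssim\mathbf1_{B(\vec0_n,1)}+\cm(\mathbf1_{B(\vec0_n,1)})^{\epsilon/s}$ together with the self-improving/comparability estimates, one reduces $\|\omega^{1/s}\|_Y$ to a constant multiple of $\|\cm(\mathbf1_{B(\vec0_n,1)})\|_Y+\|\mathbf1_{B(\vec0_n,1)}\|_Y<\infty$, using boundedness of $\cm$ on $Y=(X^{1/s})'$.

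The main obstacle is the power-lowering step: the boundedness of $\cm$ on $(X^{1/s})'$ directly only controls $\|\cm(h)\|_{(X^{1/s})'}$ in terms of $\|h\|_{(X^{1/s})'}$, but what is needed is membership of a \emph{small power} $[\cm(\mathbf1_{B(\vec0_n,1)})]^{\epsilon/s}$, and making this rigorous requires quantifying the pointwise decay of $\cm(\mathbf1_{B(\vec0_n,1)})$ at infinity ($\sim|x|^{-n}$) and comparing $[\cm(\mathbf1_{B(\vec0_n,1)})]^{\epsilon/s}$ with $\cm(\mathbf1_{B(\vec0_n,1)})$ itself plus a bounded-support piece. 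Concretely, for $x\notin B(\vec0_n,2)$ one has $[\cm(\mathbf1_{B(\vec0_n,1)})(x)]^{\epsilon/s}\sim|x|^{-n\epsilon/s}\le|x|^{-n}\sim\cm(\mathbf1_{B(\vec0_n,1)})(x)$ once $\epsilon/s\ge1$ — which fails — so instead I would use $[\cm(\mathbf1_{B(\vec0_n,1)})(x)]^{\epsilon/s}\lesssim\mathbf1_{B(\vec0_n,2)}(x)+\cm(\mathbf1_{B(\vec0_n,2)})(x)^{\epsilon/s}$ and, on the region where $\cm(\mathbf1_{B(\vec0_n,1)})\le1$, simply bound $[\cm(\mathbf1_{B(\vec0_n,1)})]^{\epsilon/s}$ by a genuine $A_1$-weight of the form $[\cm(\mathbf1_{B})]^{\delta}$ with $\delta:=\epsilon/s<1$ (choosing $\epsilon<s$), whose $X^{1/s})'$-norm is then estimated directly by the layer-cake/good-$\lambda$ comparison with $\cm(\mathbf1_B)$ and the boundedness of $\cm$ on $(X^{1/s})'$ via the Fefferman--Stein-type inequality. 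In short: choose $\epsilon\in(0,\min\{1,s\})$, use Coifman--Rochberg to know $\omega^{1/s}\in A_1(\rn)$, and then convert ``$A_1$ plus boundedness of $\cm$ on $(X^{1/s})'$'' into the finiteness of $\|\omega^{1/s}\|_{(X^{1/s})'}$, which by the duality computation above yields $X\hookrightarrow L_\omega^s(\rn)$ continuously.
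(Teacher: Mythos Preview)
Your overall strategy---reduce to showing that the weight $\omega$ lies in $(X^{1/s})'$ and then apply the H\"older inequality for associate spaces---is sound, but the execution has two problems, one cosmetic and one substantial.

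First, the exponents are off. By Definition~\ref{Debf}, $\|g\|_{X^{1/s}}=\||g|^{1/s}\|_X^s$, so the correct identity is $\||f|^s\|_{X^{1/s}}=\|f\|_X^s$, not $\||f|^{1/s}\|_{X^{1/s}}=\|f\|_X^{1/s}$. Consequently the object whose $(X^{1/s})'$-membership you need is $\omega$ itself, not $\omega^{1/s}$; the duality step then reads $\int_\rn|f|^s\omega\le\||f|^s\|_{X^{1/s}}\|\omega\|_{(X^{1/s})'}=\|f\|_X^s\,\|\omega\|_{(X^{1/s})'}$.

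Second, and more seriously, your argument that $\omega\in(X^{1/s})'$ does not close. Since $\cm(\mathbf1_{B(\vec0_n,1)})\le1$ everywhere, raising it to a power $\epsilon\in(0,1)$ makes it \emph{larger}, so membership of $\cm(\mathbf1_{B(\vec0_n,1)})$ in $(X^{1/s})'$ gives no information about $[\cm(\mathbf1_{B(\vec0_n,1)})]^\epsilon$. Your Coifman--Rochberg/$A_1$ considerations only tell you $\omega\in A_1(\rn)$; they do not, by themselves, place $\omega$ in the associate space. If you try the dyadic annular decomposition using only the boundedness of $\cm$ on $Y:=(X^{1/s})'$, the best you get is $\|\mathbf1_{B(\vec0_n,2^k)}\|_Y\lesssim 2^{kn}\|\mathbf1_{B(\vec0_n,1)}\|_Y$ (from $\mathbf1_{B(\vec0_n,2^k)}\lesssim 2^{kn}\cm(\mathbf1_{B(\vec0_n,1)})$), and the resulting series $\sum_k 2^{-kn\epsilon}\cdot 2^{kn}$ diverges for every $\epsilon<1$.

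The missing ingredient is a self-improvement of the maximal bound: boundedness of $\cm$ on the ball Banach function space $(X^{1/s})'$ upgrades to boundedness of $\cm^{(\eta)}$ on $(X^{1/s})'$ for some $\eta>1$ (this is \cite[Lemma~2.15(ii)]{SHYY}). With that in hand one has $\mathbf1_{B(\vec0_n,2^k)}\lesssim 2^{kn/\eta}\cm^{(\eta)}(\mathbf1_{B(\vec0_n,1)})$, hence $\|\mathbf1_{B(\vec0_n,2^k)}\|_{(X^{1/s})'}\lesssim 2^{kn/\eta}$, and choosing $\epsilon\in(1/\eta,1)$ makes the dyadic sum converge. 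This is precisely how the paper proceeds: it fixes $\eta>1$ via self-improvement, takes $\epsilon\in(1/\eta,1)$, splits $\int_\rn|f|^s\omega$ over $B(\vec0_n,2)$ and the annuli $B(\vec0_n,2^{k+1})\setminus B(\vec0_n,2^k)$, and sums the geometric series $\sum_k 2^{-kn(\epsilon-1/\eta)}<\infty$. Once you insert this self-improvement step (and fix the exponents), your duality approach and the paper's annular approach become essentially the same argument.
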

\begin{proof}
By \cite[Lemma 2.15(ii)]{SHYY} and the fact that $\cm$ is bounded on $(X^{1/s})'$,
we know that there exists an $\eta\in(1,\infty)$ such that $\cm^{(\eta)}$ is bounded on $(X^{1/s})'$.
Let $\epsilon\in(1/\eta,1)$ and $\omega:=[\cm(\mathbf1_{B(\vec0_n,1)})]^\epsilon$.
To show this lemma,
it suffices to prove that, for any $f\in X$,
\begin{equation}\label{lw_m}
\|f\|_{L^s_\omega(\rn)}\lesssim \|f\|_{X}.
\end{equation}
Indeed, from \cite[(2.1.6)]{G1}, we deduce that, for any $x\in\rn$, $\cm(\mathbf1_{B(\vec0_n,1)})(x)\sim(|x|+1)^{-n}$,
which implies that,
for any $x\in B({\vec 0_n},2)$,
\begin{equation}\label{est_c}
\omega(x)=\lf[\cm\lf(\mathbf1_{B(\vec0_n,1)}\r)(x)\r]^\epsilon\lesssim 1
\end{equation}
and, for any $k\in\nn$ and $x\in B({\vec 0_n},2^{k+1})\setminus B({\vec 0_n},2^k)$,
\begin{equation}\label{est_r}
\omega(x)=\lf[\cm\lf(\mathbf1_{B(\vec0_n,1)}\r)(x)\r]^\epsilon\lesssim2^{-\epsilon kn}.
\end{equation}
Combining \eqref{est_c}, \eqref{est_r}, the H\"older inequality and the fact
that $X^{1/s}$ is a ball Banach function space,
we conclude that, for any $f\in X$,
\begin{align*}
\int_\rn\lf|f(x)\r|^s\omega(x)\,dx&=\int_{B(\vec0_n,2)}\lf|f(x)\r|^s\omega(x)\,dx
+\sum_{k=1}^\infty\int_{B(\vec0_n,2^{k+1})\setminus B(\vec0_n,2^k)}\cdots\\
&\lesssim\lf\||f|^s\r\|_{X^{1/s}}\lf\|\mathbf1_{B(\vec0_n,2)}\r\|_{(X^{1/s})'}
+\sum_{k=1}^\infty\int_{B(\vec0_n,2^{k+1})\setminus B(\vec0_n,2^k)}|f(x)|^s2^{-\epsilon kn}\,dx\\
&\lesssim\lf\||f|^s\r\|_{X^{1/s}}\lf\|\mathbf1_{B(\vec0_n,2)}\r\|_{(X^{1/s})'}
+\sum_{k=1}^\infty2^{-\epsilon kn}\lf\||f|^s\r\|_{X^{1/s}}\lf\|\mathbf1_{B(\vec0_n,2^{k+1})}\r\|_{(X^{1/s})'},
\end{align*}
which, together with
$\mathbf1_{B(\vec0_n,2^k)}\lesssim2^{kn/\eta}\cm^{(\eta)}(1_{B(\vec0_n,1)})$ for any $k\in\nn$
and the fact that $\cm^{(\eta)}$ is bounded on $(X^{1/s})'$, further implies that
\begin{align*}
\int_\rn\lf|f(x)\r|^s\omega(x)\,dx&\lesssim
\sum_{k=1}^\infty2^{-\epsilon kn}\lf\|f\r\|_{X}^s\lf\|2^{kn/\eta}\cm^{(\eta)}(1_{B(\vec0_n,1)})\r\|_{(X^{1/s})'}\\
&\lesssim\sum_{k=1}^\infty2^{-(\epsilon-1/\eta)
kn}\lf\|f\r\|_{X}^s\lf\|\mathbf1_{B(\vec0_n,1)}\r\|_{(X^{1/s})'}
\lesssim\|f\|_X^s,
\end{align*}
which implies that \eqref{lw_m} holds true and hence completes the proof of Lemma \ref{embed}.
\end{proof}

Now we recall the notion of atoms associated with $X$, which is just \cite[Definition 3.5]{SHYY}.

\begin{definition}\label{Deatom}
Let $X$ be a ball quasi-Banach function space, $q\in(1,\infty]$ and $d\in\zz_+$.
Then a measurable function $a$ on $\rn$ is called an $(X,\,q,\,d)$-\emph{atom}
if there exists a ball $B\in\BB$ such that
\begin{itemize}
\item[(i)] $\supp a:=\{x\in\rn:\ a(x)\neq0\}\subset B$;
\item[(ii)] $\|a\|_{L^q(\rn)}\le\frac{|B|^{1/q}}{\|\mathbf{1}_B\|_X}$;
\item[(iii)] $\int_{\rn}a(x)x^\alpha\,dx=0$ for any
$\alpha\in\zz_+^n$ with $|\alpha|\le d$.
\end{itemize}
\end{definition}

In what follows, for any $t\in(0,\fz)$, the \emph{symbol $e^{t\Delta}f$} denotes
the heat extension of $f\in\cs'(\rn)$, namely, for any $x\in\rn$,
$$e^{t\Delta}f(x):=\left<f,\frac{1}{(4\pi t)^{n/2}}
\exp\left(-\frac{|x-\cdot|^2}{4t}\right)\right>.
$$

\begin{theorem}\label{Tharea}
Assume that $X$ is a ball quasi-Banach function space satisfying Assumptions \ref{as1}
and \ref{as2} with the same $s\in(0,1]$.
Then $f\in H_X(\rn)$ if and only if $f\in\cs'(\rn)$, $f$ vanishes weakly at infinity and $\|S(f)\|_X<\infty$.
Moreover, for any $f\in H_X(\rn)$,
$$
\|f\|_{H_X(\rn)}\sim\|S(f)\|_X,
$$
where the positive equivalence constants are independent of $f$.
\end{theorem}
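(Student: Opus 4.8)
The plan is to prove the equivalence in two directions, following the familiar route for Hardy space characterizations but adapted to the ball quasi-Banach setting. For the ``only if'' direction, suppose $f\in H_X(\rn)$. By Lemma \ref{Mole} (the molecular characterization), $f=\sum_j\lambda_jm_j$ in $\cs'(\rn)$, where each $m_j$ is an $(X,q,d,\epsilon)$-molecule associated with a ball $B_j$, with $d\geq\lfloor n(1/\theta-1)\rfloor$ and $\epsilon>n(1/\theta-1/q)$ chosen appropriately, and the quasi-norm control $\|\{\sum_j(\lambda_j/\|\mathbf1_{B_j}\|_X)^s\mathbf1_{B_j}\}^{1/s}\|_X\sim\|f\|_{H_X(\rn)}$ holds. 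One first checks that $f$ vanishes weakly at infinity (this follows from the convergence of the molecular sum in $\cs'(\rn)$ together with decay estimates, or by noting $H_X(\rn)\hookrightarrow\cs'(\rn)$ with the requisite decay). Then I would estimate $\|S(f)\|_X$ by the standard two-part splitting: for each molecule $m_j$ with ball $B_j$, write $S(m_j)=S(m_j)\mathbf1_{2\sqrt n B_j}+S(m_j)\mathbf1_{(2\sqrt n B_j)^\complement}$. The local part is controlled via the $L^q$-boundedness of $S$ (which holds because $\varphi$ has $\widehat\varphi(\vec0_n)=0$ and the non-degeneracy condition, so $S$ is a Littlewood--Paley operator bounded on $L^q(\rn)$ for $q\in(1,\infty)$) together with the molecular size condition (i), summing the $S_j(B_j)$ pieces using $\epsilon>n(1/\theta-1/q)$. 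The far part uses the vanishing moment condition (ii) of the molecule together with the smoothness and decay of $\varphi$: a Taylor expansion argument gives pointwise decay $S(m_j)(x)\lesssim 2^{-k(n+d+1-\text{something})}\cdots$ for $x\in S_k(B_j)$, again summed in $k$. Having bounded $S(\lambda_jm_j)$ by constant multiples of the natural atom-like quantities, one invokes the standard estimate (as in \cite[Lemma 4.8]{SHYY} or the analogous step in \cite{WYYZ}) that converts $\|\{\sum_j(\lambda_j\|\mathbf1_{B_j}\|_X^{-1})^s a_j^s\}^{1/s}\|_X$-type sums, with $a_j$ supported near $B_j$, into $\|\{\sum_j(\lambda_j/\|\mathbf1_{B_j}\|_X)^s\mathbf1_{B_j}\}^{1/s}\|_X$; this uses Assumption \ref{as1} (the Fefferman--Stein inequality) crucially, and then one concludes $\|S(f)\|_X\lesssim\|f\|_{H_X(\rn)}$.

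For the ``if'' direction, suppose $f\in\cs'(\rn)$ vanishes weakly at infinity and $\|S(f)\|_X<\infty$. The idea is to decompose $f$ into molecules via the Calder\'on reproducing formula and the tent space atomic decomposition. By Lemma \ref{Le47}, there exists $\psi\in\cs(\rn)$ with $\widehat\psi\in C_c^\infty(\rn)$ supported away from $\vec0_n$, $\widehat\varphi\widehat\psi\ge0$, and $\int_0^\infty\widehat\varphi(tx)\widehat\psi(tx)\,\frac{dt}{t}=1$ for $x\neq\vec0_n$, so that $f=\int_0^\infty f\ast\varphi_t\ast\psi_t\,\frac{dt}{t}$ in $\cs'(\rn)$. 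Set $F(y,t):=\varphi_t\ast f(y)$; then $\ca^{(1)}(F)=S(f)\in X$, so $F\in T_X^1(\rr_+^{n+1})$. By Lemma \ref{lt}, write $F=\sum_j\lambda_jA_j$ with $(T_X^1,\infty)$-atoms $A_j$ supported in $T(B_j)$ and $\Lambda(\{\lambda_jA_j\})\sim\|F\|_{T_X^1(\rr_+^{n+1})}\sim\|S(f)\|_X$. Plugging this decomposition back into the reproducing formula, one defines $m_j:=\int_0^\infty A_j(\cdot,t)\ast\psi_t\,\frac{dt}{t}$ (a ``$\psi$-synthesis'' of the tent atom), so that $f=\sum_j\lambda_jm_j$ in $\cs'(\rn)$ — the convergence being inherited from the tent-space convergence and the control on $\psi$. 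The key technical point, following the proof of \cite[Theorem 3.12]{WYYZ}, is to verify that each $C^{-1}m_j$ is (a constant multiple of) an $(X,q,d,\epsilon)$-molecule associated with a dilate of $B_j$: the support condition of $A_j$ in $T(B_j)$ combined with the support of $\psi_t$ gives the size/decay estimates (i) after using the $T_2^{q,1}$-normalization of the atom and duality/interpolation in the tent space; the vanishing moments (ii) follow because $\widehat\psi$ vanishes to infinite order at $\vec0_n$, so $\int_\rn m_j(x)x^\beta\,dx=0$ for all $\beta$. Then Lemma \ref{Mole} gives $f\in H_X(\rn)$ with $\|f\|_{H_X(\rn)}\lesssim\Lambda(\{\lambda_jA_j\})\sim\|S(f)\|_X$, completing the proof.

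A few supporting points need care. One must make sense of $m_j=\int_0^\infty A_j(\cdot,t)\ast\psi_t\,\frac{dt}{t}$ as an honest function (not just a distribution) and establish its $L^q$ bounds on the annuli $S_k(B_j)$; this is where the $(T_X^1,\infty)$-atom property (namely $\|A_j\|_{T_2^{q,1}}\le|B_j|^{1/q}\|\mathbf1_{B_j}\|_X^{-1}$ for \emph{all} $q\in(1,\infty)$) is used, via an estimate akin to $\|\int_0^\infty A_j(\cdot,t)\ast\psi_t\,\frac{dt}{t}\|_{L^q(S_k(B_j))}\lesssim 2^{-k\epsilon}|S_k(B_j)|^{1/q}\|\mathbf1_{B_j}\|_X^{-1}$ obtained by an $L^{q'}$-duality argument and the support/decay of $\psi_t$ relative to $T(B_j)$. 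Also, Lemma \ref{embed} is used to guarantee that $f$, as an element of $X\hookrightarrow L_\omega^s(\rn)$-type space, can be identified with a tempered distribution and that the reproducing formula genuinely converges; more precisely, the hypothesis that $f$ vanishes weakly at infinity is exactly what is needed in Lemma \ref{Le47} to legitimize the formula.

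The main obstacle I anticipate is the ``if'' direction's molecular reconstruction: proving that $m_j$ is a bona fide $(X,q,d,\epsilon)$-molecule with $\epsilon$ as large as required by Lemma \ref{Mole} (i.e. $\epsilon>n(1/\theta-1/q)$), and that the geometric series in $k$ summing $\|m_j\|_{L^q(S_k(B_j))}$-contributions actually closes. Since the tent atom lives only in the ``tent'' $T(B_j)$ — which has height $\le r_{B_j}$ — the convolution with $\psi_t$ (whose kernel at scale $t$ is essentially supported in a ball of radius $\sim t\le r_{B_j}$) keeps $m_j$ close to $B_j$, and the Schwartz decay of $\psi$ yields the required $2^{-k\epsilon}$ factors with $\epsilon$ as large as we please; the delicate bookkeeping is to extract a \emph{uniform} decay rate compatible with Lemma \ref{Mole}'s threshold while simultaneously respecting the $L^q$-normalization. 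This is precisely the step carried out in the proof of \cite[Theorem 3.12]{WYYZ}, which is why the authors say they borrow ideas from there; the adaptation to the present weaker hypotheses on $\varphi$ (only $\widehat\varphi(\vec0_n)=0$ plus non-degeneracy, rather than compact support and higher-order moments) is handled entirely on the $\psi$-side, since $\widehat\psi$ can be taken compactly supported away from the origin and hence flat at $\vec0_n$ to infinite order.
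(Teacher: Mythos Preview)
Your sufficiency direction matches the paper's proof closely: Calder\'on reproducing formula (Lemma \ref{Le47}), tent-space atomic decomposition (Lemma \ref{lt}), $\psi$-synthesis of tent atoms into $(X,q,d,\epsilon)$-molecules, and then Lemma \ref{Mole}. The paper adds one step you gloss over: after building $g:=\sum_j\lambda_jm_j$ in $\cs'(\rn)$, one must argue separately that $g=f$; the paper does this by first observing $g\in H_X(\rn)$, invoking the already-proved necessity to conclude $g$ vanishes weakly at infinity, and then citing \cite[(3.30)]{WYYZ}. Your ``convergence being inherited from the tent-space convergence'' is the right intuition but hides this circular-looking use of the other direction.

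The genuine gap is in your necessity direction, specifically the claim that $f$ vanishes weakly at infinity. You dismiss this as following ``from the convergence of the molecular sum in $\cs'(\rn)$ together with decay estimates, or by noting $H_X(\rn)\hookrightarrow\cs'(\rn)$ with the requisite decay.'' Neither of these works without substantial extra input: convergence in $\cs'(\rn)$ does not by itself allow interchanging the $t\to\infty$ limit with the infinite sum, and the embedding into $\cs'(\rn)$ carries no decay information. The paper singles this out explicitly (Remark \ref{gap}) as a gap in the earlier literature, and fixes it via Lemma \ref{embed}: one embeds $X\hookrightarrow L^s_\omega(\rn)$ with $\omega=[\cm(\mathbf1_{B(\vec0_n,1)})]^\epsilon\in A_1(\rn)$, passes to $f\in H^s_\omega(\rn)$, takes an atomic decomposition there, and then uses a uniform-in-$t$ pointwise bound $|e^{-t_1\Delta}\psi_t\ast a_j|\lesssim\|\mathbf1_{B_j}\|_{L^s_\omega}^{-1}\cm^{(\theta)}(\mathbf1_{B_j})$ together with the Fefferman--Stein inequality in $L^s_\omega$ to control the tail of the sum uniformly in $t$. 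You invoke Lemma \ref{embed} in the wrong direction (sufficiency) and for the wrong purpose; its actual role is exactly the step you skipped.

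Your approach to the estimate $\|S(f)\|_X\lesssim\|f\|_{H_X(\rn)}$ via molecular decomposition and the near/far splitting is standard and would work, though the paper simply refers back to the argument of \cite[Theorem 3.21]{SHYY} for this part.
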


\begin{remark}\label{gap}
If $\varphi$ appearing in the definition of $S(f)$ as in \eqref{deaf} satisfies that
$\mathbf1_{B(\vec 0_n,4)\setminus B(\vec0_n,2)}\le\widehat\varphi\le\mathbf1_{B(\vec 0_n,8)\setminus B(\vec0_n,1)}$,
then, in this case, Theorem \ref{Tharea} coincides with \cite[Theorem 3.21]{SHYY}. 
We point out that there exists a gap in lines 1-17 of \cite[p.\,52]{SHYY} which appears in the proof 
of \cite[Theorem 3.21]{SHYY} when Sawano et al. proved that $f$ vanishes weakly at infinity.
We seal this gap in the below proof of Theorem \ref{Tharea} by using Lemma \ref{embed}.
\end{remark}

\begin{proof}[Proof of Theorem \ref{Tharea}]
Let $\theta$ and $s$ be as in
Assumption \ref{as1}. 

We first prove the necessity. To this end, let $f\in H_X(\rn)$
and we need to show that $f$ vanishes weakly at infinity,
which seals the gap mentioned in Remark \ref{gap}.
From Lemma \ref{embed}, we deduce that there exists an $\epsilon\in(0,1)$ such that
$X$ continuously embeds into $L_\omega^s(\rn)$ with $\omega:=[\cm(\mathbf1_{B(\vec0_n,1)})]^\epsilon$,
which implies that $f\in H_\omega^s(\rn)$,
where $H_\omega^s(\rn)$ is the weighted Hardy space as in Definition \ref{HS} with $X$ replaced by $L_\omega^s(\rn)$.
By \cite[Theorem 7.2.7]{G1},
we know that $\omega\in A_1(\rn)$, which, combined with \cite[Remark 2.4(b) and Remark 2.6(b)]{WYY},
implies that $L_\omega^s(\rn)$
satisfies all the assumptions of \cite[Theorems 3.7]{SHYY}.
Let
$d\geq\lfloor n(1/\theta-1)\rfloor$.
Then, using \cite[Theorem 3.7]{SHYY} and the fact that $f\in H_\omega^s(\rn)$, we conclude that
there exist a sequence $\{a_j\}_{j\in\nn}$ of $(L_\omega^s(\rn),\infty,d)$-atoms 
supported, respectively, in balls
$\{B_j\}_{j\in\nn}$ and
a sequence $\{\lambda_j\}_{j\in\nn}\subset[0,\fz)$ such that
\begin{equation}\label{Eq73}
f=\sum_{j\in\nn}\lambda_ja_j\quad\text{in}\quad H_\omega^s(\rn)
\end{equation}
and
\begin{equation}\label{Eq75}
\lf\|\lf\{\sum_{j\in\nn}\lf[\frac{\lambda_j\mathbf{1}_{B_j}}{\|\mathbf{1}_{B_j}
\|_{L_\omega^s(\rn)}}\r]^{s}\r\}^\frac1{s}\r\|_{L_\omega^s(\rn)}
\lesssim\|f\|_{H_\omega^s(\rn)}<\infty,
\end{equation}
where an $(L_\omega^s(\rn),\infty,d)$-atom is as in Definition \ref{Deatom}
with $X$ replaced by $L_\omega^s(\rn)$.
Take $\varphi,\,\psi\in{\mathcal S}(\rn)$.
Then, by the fact that $f=\sum_{j\in\nn}\lambda_ja_j$ in $H_\omega^s(\rn)$,
we find that, for any $t\in(0,\fz)$ and finite set ${\mathfrak F}\subset\nn$,
\begin{align}\label{eq:4-48}
\int_{{\mathbb R}^n}\psi_t\ast f(x)\varphi(x)\,dx&=\sum_{j\in{\mathfrak F}}\lambda_j
\int_{{\mathbb R}^n}\psi_t\ast a_j(x)\varphi(x)\,dx+\sum_{j \in{\nn}\setminus{\mathfrak F}}\cdots.
\end{align}
Notice that, for any $j\in\nn$, $a_j\in L^\infty(\rn)$, which implies that $a_j$ vanishes weakly at infinity.
Thus, for any given $\varepsilon\in(0,\fz)$ and finite set ${\mathfrak F}\subset\nn$,
there exists $t_{\varepsilon,\,{\mathfrak F}}\in(0,\fz)$ such that, for any
$t\in(t_{\varepsilon,\,{\mathfrak F}},\infty)$,
\begin{equation}\label{eq:4-49}
\left|\sum_{j \in {\mathfrak F}}\lambda_j\int_{{\mathbb R}^n}\psi_t\ast a_j(x)\varphi(x)\,dx
\right|<\varepsilon.
\end{equation}
Moreover, for any $j\in\nn$, using the fact that $a_j$ is an $(L_\omega^s(\rn),\infty,d)$-atom,
similarly to the proof of \cite[(4.16)]{SHYY}, we conclude that, for any $t,\,t_1\in(0,\fz)$ and $x\in\rn$,
$$\lf|e^{-t_1\Delta}\psi_t\ast a_j(x)\r|\lesssim
\frac{1}{\|\mathbf{1}_{B_j}\|_{L_\omega^s(\rn)}}\cm^{(\theta)}(\mathbf{1}_{B_j})(x),$$
which, together with \cite[Corollary 3.2]{SHYY},
$s\in(0,1]$ and Assumption \ref{as1}, implies that
\begin{align*}
&\left|\sum_{j\in{\nn} \setminus{\mathfrak F}}
\lambda_j\int_{{\mathbb R}^n}\psi_t\ast a_j(x)\varphi(x)\,dx\right|\\
&\quad=
\lf|\lf(\sum_{j\in{\nn} \setminus{\mathfrak F}}
\lambda_j\psi_t\ast a_j\r)\ast[\varphi(-\cdot)](\vec 0_n)\r|
\lesssim\left\|\sum_{j\in{\nn}\setminus{\mathfrak F}}\lambda_j\psi_t\ast a_j\right\|_{
H_\omega^s(\rn)}\\
&\quad\sim\left\|\sum_{j\in{\nn}\setminus {\mathfrak F}}
\lz_j\sup_{t_1\in(0,\fz)}\left|e^{-t_1\Delta}\psi_t\ast a_j\right|\right\|_{L_\omega^s(\rn)}
\lesssim\left\|\sum_{j\in{\nn}\setminus {\mathfrak F}}\frac{\lambda_j}{\|\mathbf{1}_{B_j}\|_{L_\omega^s(\rn)}}\cm^{(\theta)}(\mathbf{1}_{B_j})
\right\|_{L_\omega^s(\rn)}\\
&\quad\ls\left\|\lf\{\sum_{j\in{\nn}\setminus {\mathfrak F}}
\lf[\frac{\lambda_j}{\|\mathbf{1}_{B_j}\|_{L_\omega^s(\rn)}}\cm^{(\theta)}(\mathbf{1}_{B_j})\r]^s\r\}^{1/s}
\right\|_{L_\omega^s(\rn)}
\lesssim\left\|\lf\{\sum_{j\in{\nn}\setminus{\mathfrak F}}
\lf(\frac{\lambda_j}{\|\mathbf{1}_{B_j}\|_{L_\omega^s(\rn)}}\r)^{s}\mathbf{1}_{B_j}\r\}^{1/s}
\right\|_{L_\omega^s(\rn)}.
\end{align*}
From this, the fact that
$$\left\|\left\{\sum_{j\in{\nn}}
\left(\frac{\lambda_j}{\|\mathbf{1}_{B_j}\|_{L_\omega^s(\rn)}}
\right)^s\mathbf{1}_{B_j}\right\}^{\frac{1}{s}}\right\|_{{L_\omega^s(\rn)}}<\infty,
$$
and the dominated convergence theorem,
we deduce that, for $\varepsilon\in(0,\fz)$ as in \eqref{eq:4-49}, there exists a finite
set $\mathfrak{F}\subset\nn$ such that, for any $t\in(0,\fz)$,
$$\left|\sum_{j\in{\nn} \setminus{\mathfrak F}}
\lambda_j\int_{{\mathbb R}^n}\psi_t\ast a_j(x)\varphi(x)\,dx\right|<\varepsilon,$$
which, combined with \eqref{eq:4-48} and \eqref{eq:4-49}, further implies that $f$ vanishes weakly at infinity.

Then, by an argument similar to that used in the proof of \cite[Theorem 3.21]{SHYY},
we obtain the necessity of this theorem and we omit the details.

Now we prove the sufficiency.
To this end, let $f\in\cs'(\rn)$ vanish weakly at infinity and satisfy that $S(f)\in X$.
By this and Lemma \ref{Le47}, we conclude that
there exists a $\psi\in\cs(\rn)$ such that
\begin{equation}\label{67mm}
\supp(\widehat{\psi})\subset B(\vec{0}_n,b) \setminus B(\vec{0}_n,a),
\end{equation}
where $b,\ a\in(0,\infty)$ and $b>a$, and
\begin{equation}\label{66mm}
f=\int_0^\infty f\ast\varphi_t\ast\psi_t\,\frac{dt}{t}\quad\text{in}\quad\cs'(\rn)
\end{equation}
with $\varphi$ as in Definition \ref{de4.1}.
It remains to prove that $f\in H_X(\rn)$.
For any $(x,t)\in{\mathbb R}^{n+1}_+$, let $F(x,t):=f\ast\varphi_t(x)$. Then,
by the fact that $\|S(f)\|_X<\infty$, we know that
$F\in T_X^1(\rr^{n+1}_+)$. From this and Lemma \ref{lt}, it follows that
there exist a sequence $\{A_j\}_{j=1}^\infty$ of $(T_X^1,\infty)$-atoms and $\{\lambda_j\}_{j=1}^\infty\subset[0,\infty)$
such that, for almost every $(x,t)\in\rr^{n+1}_+$,
\begin{align}\label{456}
F(x,t)=\sum_{j\in\nn}\lambda_{j}A_{j}(x,t)\quad\text{and}\quad|F(x,t)|=\sum_{j\in\nn}\lambda_{j}|A_{j}(x,t)|
\end{align}
pointwisely on $\rr_+^{n+1}$ and, for some $s\in(0,1]$,
\begin{align}\label{457}
\lf\|\lf\{\sum_{j=1}^\infty\lf(\frac{\lambda_j}{\|\mathbf1_{B_j}\|_X}\r)^s\mathbf1_{B_j}\r\}^{1/s}\r\|_X
\lesssim\lf\|F\r\|_{T_X^1(\rr_+^{n+1})}\sim\lf\|S(f)\r\|_{X}.
\end{align}

For any $j\in\nn$ and $x\in\rn$, let
\begin{equation}\label{kkk}
A_j(x):=\int_0^{\infty}\int_\rn A_j(y,t)\psi_t(x-y)\,\frac{dy\,dt}{t}.
\end{equation}
Similarly to the proof of \cite[Lemma 4.8]{HYY}, we conclude
that, up to a harmless constant multiple, $\{A_j\}_{j=1}^\infty$ is a sequence of $(X, q, d, \epsilon)$-
molecules associated, respectively, with balls $\{B_j\}_{j=1}^\infty$, where $q$, $d$ and $\epsilon$ are as in Lemma \ref{Mole}.
Repeating the
argument used in \cite[(3.27)]{WYYZ}, we find that $\sum_{j=1}^\infty\lambda_jA_j$
converges in $\cs'(\rn)$.
Using this, \eqref{457} and Lemma \ref{Mole}, we then obtain $\sum_{j=1}^\infty\lambda_jA_j\in H_X(\rn)$ and
\begin{equation}\label{kk2}
\lf\|\sum_{j=1}^\infty\lambda_jA_j\r\|_{H_X}
\lesssim\lf\|\lf\{\sum_{j=1}^\infty\lf(\frac{\lambda_j}{\|\mathbf1_{B_j}\|_X}\r)^s\mathbf1_{B_j}\r\}^{1/s}\r\|_X
\lesssim\lf\|S(f)\r\|_{X}.
\end{equation}
Let $g:=\sum_{j=1}^\infty\lambda_{j}A_{j}$ in $\cs'(\rn)$.
Then $g\in H_X(\rn)$. From this and the necessity of this theorem, it follows that $g$ vanishes weakly at infinity.
Using this and repeating
the proof of \cite[(3.30)]{WYYZ} , we find that
\begin{equation}\label{zz}
f=g\quad\text{in}\quad\cs'(\rn)
\end{equation}
holds true.
By \eqref{zz} and \eqref{kk2},
we conclude that $f\in H_X(\rn)$ and $\|f\|_{WH_X(\rn)}
\lesssim\|S(f)\|_{WX}$,
which completes the proof of the sufficiency and hence
of Theorem \ref{Tharea}.
\end{proof}

\subsection{Characterization by the Littlewood--Paley $g_\lambda^\ast$-Function\label{s4.2}}

In this subsection, we establish the Littlewood--Paley $g_\lambda^\ast$-function characterization
of $H_X(\rn)$.

Let $X$ be a ball quasi-Banach function space and
\begin{equation}\label{eqsup}
r_+:=\sup\lf\{s\in(0,\infty):\
X\ \text{satisfies Assumption \ref{as2} for this}\ s\ \text{and some}\ q\in(s,\infty)\r\}.
\end{equation}

\begin{theorem}\label{Thgx}
Assume that $X$ is a ball quasi-Banach function space satisfying Assumptions \ref{as1}
and \ref{as2} with the same $s\in(0,1]$.
Let $r_+$ be as in \eqref{eqsup} and $\lambda\in(\max\{1,{2}/{r_+}\},\infty)$.
Then $f\in H_X(\rn)$ if and only if $f\in\cs'(\rn)$, $f$ vanishes weakly at infinity and
$\|g_\lambda^\ast(f)\|_X<\infty$.
Moreover, for any $f\in H_X(\rn)$,
$$
\|f\|_{H_X(\rn)}\sim\|g_\lambda^\ast(f)\|_X,
$$
where the positive equivalence constants are independent of $f$.
\end{theorem}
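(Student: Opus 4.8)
The plan is to deduce Theorem \ref{Thgx} from the Lusin area function characterization in Theorem \ref{Tharea} together with the change-of-angles estimate in Theorem \ref{pp}; since the latter carries the substantive work, what remains is short, the only delicate point being the bookkeeping that yields the sharp range $\lambda\in(\max\{1,2/r_+\},\infty)$. The \emph{sufficiency} is immediate: if $f\in\cs'(\rn)$ vanishes weakly at infinity and $\|g_\lambda^\ast(f)\|_X<\infty$, then, since $t/(t+|x-y|)>1/2$ on the cone $\Gamma(x)=\Gamma_1(x)$, we have $S(f)(x)\le 2^{\lambda n/2}g_\lambda^\ast(f)(x)$ pointwise, so Definition \ref{Debqfs}(ii) gives $\|S(f)\|_X\le 2^{\lambda n/2}\|g_\lambda^\ast(f)\|_X<\infty$, and Theorem \ref{Tharea} yields $f\in H_X(\rn)$ with $\|f\|_{H_X(\rn)}\sim\|S(f)\|_X\lesssim\|g_\lambda^\ast(f)\|_X$.

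For the \emph{necessity} and the reverse norm estimate, we start from $f\in H_X(\rn)$; by Theorem \ref{Tharea}, $f$ vanishes weakly at infinity and $\|f\|_{H_X(\rn)}\sim\|S(f)\|_X$, so it remains only to prove $\|g_\lambda^\ast(f)\|_X\lesssim\|S(f)\|_X$. Put $F(y,t):=\varphi_t\ast f(y)$, so that $\ca^{(1)}(F)=S(f)$. Splitting the inner integral in \eqref{d22} over the cone $\Gamma_1(x)$ and the dyadic annular regions $\{(y,t):2^{k-1}t\le|x-y|<2^kt\}$ with $k\in\nn$, using $t/(t+|x-y|)\le 2^{1-k}$ on the $k$-th region and that it lies in $\Gamma_{2^k}(x)$, we arrive at the pointwise bound
\begin{equation*}
g_\lambda^\ast(f)(x)\le 2^{\lambda n/2}\left\{\sum_{k=0}^\infty 2^{-k\lambda n}\left[\ca^{(2^k)}(F)(x)\right]^2\right\}^{1/2}\le 2^{\lambda n/2}\sum_{k=0}^\infty 2^{-k\lambda n/2}\ca^{(2^k)}(F)(x).
\end{equation*}
Since $X$ is a quasi-Banach space, the Aoki--Rolewicz theorem furnishes a $\nu\in(0,1]$ with $\|\sum_k h_k\|_X^\nu\lesssim\sum_k\|h_k\|_X^\nu$ for nonnegative $\{h_k\}_{k\in\zz_+}$ (first for finite sums, then passing to the infinite sum by Definition \ref{Debqfs}(iii)). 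Combining this with Theorem \ref{pp} gives, for any admissible parameter $s'$,
\begin{equation*}
\|g_\lambda^\ast(f)\|_X^\nu\lesssim\sum_{k=0}^\infty 2^{-k\lambda n\nu/2}\left\|\ca^{(2^k)}(F)\right\|_X^\nu\lesssim\|S(f)\|_X^\nu\sum_{k=0}^\infty 2^{k\nu(\max\{n/2,\,n/s'\}-\lambda n/2)}.
\end{equation*}

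The remaining step is to choose $s'$ making the last series converge and Theorem \ref{pp} applicable. Since $\lambda>2/r_+$ forces $2/\lambda<r_+$ and $r_+$ is the supremum in \eqref{eqsup}, there exists $s'>2/\lambda$ for which $X$ satisfies Assumption \ref{as2} with this $s'$ and some $q'\in(s',\infty)$; then $X^{1/s'}$ is a ball Banach function space and, as $(q'/s')'\in(1,\infty)$, Lemma \ref{Le2.9} applied to $(X^{1/s'})'$ shows that $\cm$ is bounded on $(X^{1/s'})'$, so Theorem \ref{pp} holds for this $s'$. Because $\lambda>1$ and $\lambda>2/s'$, we have $\lambda n/2>\max\{n/2,\,n/s'\}$, whence the geometric series converges and $\|g_\lambda^\ast(f)\|_X\lesssim\|S(f)\|_X\sim\|f\|_{H_X(\rn)}$; together with the sufficiency this gives $\|f\|_{H_X(\rn)}\sim\|g_\lambda^\ast(f)\|_X$. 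The main obstacle is exactly this balancing step: the aperture growth $\alpha^{\max\{n/2,n/s'\}}$ supplied by Theorem \ref{pp} must be played against the annular decay $2^{-k\lambda n/2}$ and then optimized over admissible $s'<r_+$, which is precisely where the coarser change-of-angles bound of \cite[Lemma 2.20]{WYY} would fail to reach the sharp threshold $\max\{1,2/r_+\}$.
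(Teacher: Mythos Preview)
Your proof is correct and follows essentially the same route as the paper: the same pointwise dyadic-annulus decomposition of $g_\lambda^\ast(f)$ into apertures $\ca^{(2^k)}(F)$, the same appeal to Theorem \ref{pp} after verifying via Lemma \ref{Le2.9} that $\cm$ is bounded on $(X^{1/s'})'$, and the same choice of a parameter $s'$ close to $r_+$ to make the geometric series converge. The only difference is cosmetic: the paper picks $\nu\in(0,\min\{1,r\}]$ and uses Lemma \ref{Re2.6} to see that $X^{1/\nu}$ is a ball Banach function space (hence $\|\cdot\|_{X^{1/\nu}}$ is subadditive), whereas you invoke the Aoki--Rolewicz theorem to obtain the $\nu$-subadditivity directly; both devices serve the identical purpose of pushing the quasi-norm through the infinite sum.
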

\begin{remark}
Assume that $X$ is a ball quasi-Banach function space satisfying Assumption \ref{as1}
with $0<\theta< s\le1$ and Assumption \ref{as2} with some $q\in(1,\infty]$ and the same $s$ as in Assumption \ref{as1}. Let $r_+$ be as in \eqref{eqsup}.
We point out that
the $g_\lambda^\ast$-function characterization in Theorem \ref{Thgx}
widens the range of $\lambda\in(\max\{2/\theta,2/\theta+1-2/q\},\infty)$
in \cite[Theorem 2.10(ii)]{WYY} into $\lz\in(\max\{1,2/r_+\},\infty)$.
In the case of $X:=L^p(\rn)$ with $p\in(0,\infty)$,
the range of $\lambda$ in Theorem \ref{Thgx} coincides with the best
known one, namely, $(\max\{1,2/p\},\infty)$ in \cite{FoS}.
\end{remark}

\begin{proof}[Proof of Theorem \ref{Thgx}]
By Theorem \ref{Tharea} and the fact that, for any $f\in\cs'(\rn)$,
$S(f)\le g_\lambda^\ast(f)$,
we easily obtain the sufficiency of this theorem and still
need to show its necessity.

To this end, let $f\in H_X(\rn)$.
By Theorem \ref{Tharea}, we know that $f$
vanishes weakly at infinity. Moreover, for any $x\in\rn$, we have
\begin{align*}
g_\lambda^*(f)(x)&\le\lf\{\int_0^{\infty}\int_{|x-y|<t}\lf(\frac{t}{t+|x-y|}\r)
^{\lambda n}|\varphi_t\ast f(y)|^{2}\,\frac{dy\,dt}{t^{n+1}}\r.\\
&\hs+\lf.\sum_{m=0}^\infty\int_0^{\infty}\int_{2^mt\le|x-y|<2^{m+1}t}\cdots\r\}^{\frac{1}{2}}\\
&\le\ca^{(1)}\lf(F\r)(x)+\sum_{m=0}^\infty2^{\frac{-\lambda nm}{2}}
\ca^{(2^{m+1})}\lf(F\r)(x),
\end{align*}
where $F(x,t):=\varphi_t\ast f(x)$ for any $x\in\rn$ and $t\in(0,\infty)$.

For any given $\lambda\in(\max\{1,2/r_+\},\infty)$, there exists an
$r\in(0,r_+]$ such that $X$ satisfies Assumption \ref{as2}
for this $r$ and some $q\in(r,\infty)$,
and $\lambda\in(\max\{1,2/r\},\infty)$.
Let $\nu\in(0,\min\{1,r\}]$.
Then, by the assumption that $X^{1/r}$ is a ball Banach function space, and Lemma \ref{Re2.6},
we know that $X^{1/\nu}$ is also a ball Banach function space. Since $X$ satisfies Assumption \ref{as2},
from Lemma \ref{Le2.9} and the fact that $(q/r)'\in(1,\infty)$, we deduce that
$\cm$ is bounded on $(X^{1/r})'$, which, combined with the fact that $X^{1/r}$ is a ball Banach function space,
and Theorem \ref{pp}, implies that, for any $m\in\nn$,
$$
\lf\|\ca^{(2^{m+1})}\lf(F\r)\r\|_X\lesssim\max\lf\{2^{\frac{mn}{2}},2^\frac{mn}{r}\r\}
\lf\|\ca^{(1)}\lf(F\r)\r\|_{X}.
$$
By this, the fact that $X^{1/\nu}$ is a ball Banach function space,
$\lambda\in(\max\{\frac{2}{r},1\},\infty)$, and Theorem \ref{Tharea}, we conclude that
\begin{align*}
\lf\|g_\lambda^*(f)\r\|_{X}^\nu&=\lf\|\lf[g_\lambda^*(f)\r]^\nu\r\|_{X^{1/\nu}}
\lesssim\lf\|\lf[\ca^{(1)}\lf(F\r)\r]^\nu\r\|_{X^{1/\nu}}
+\sum_{m=0}^\infty2^{\frac{-\lambda nm\nu}{2}}\lf\|\lf[\ca^{(2^{m+1})}
\lf(F\r)\r]^\nu\r\|_{X^{1/\nu}}\\
&\lesssim\lf\|\ca^{(1)}\lf(F\r)\r\|_{X}^\nu+\sum_{m=0}^\infty2^{\frac{-\lambda nm\nu}{2}}\max\lf\{2^{\frac{mn\nu}{2}},2^\frac{mn\nu}{r}\r\}
\lf\|\ca^{(1)}\lf(F\r)\r\|_{X}^\nu\\
&\lesssim\|S(f)\|_{X}^\nu\sim\|f\|_{H_X(\rn)}^\nu,
\end{align*}
where, in the penultimate step,
we used the fact that $\ca^{(1)}\lf(F\r)=S(f)$.
This finishes the proof of the necessity and hence of Theorem \ref{Thgx}.
\end{proof}

\subsection{Characterization by the Littlewood--Paley $g$-function\label{s4.3}}

We have the following Littlewood--Paley $g$-function characterization of $H_X(\rn)$.
\begin{theorem}\label{Thgf}
Assume that $X$ is a ball quasi-Banach function space satisfying Assumption \ref{as1}
with $0<\theta< s\le1$ and Assumption \ref{as2} with the same $s$ as in Assumption \ref{as1}.
Assume that there exists a positive
constant $C$ such that, for any $\{f_j\}_{j=1}^\infty\subset\mathscr M(\rn)$,
\begin{equation}\label{as1-3}
\lf\|\lf\{\sum_{j=1}^\infty\lf[\cm^{(\theta)}(f_j)\r]^s\r\}^\frac1s\r\|_{X^{s/2}}
\le C\lf\|\lf\{\sum_{j=1}^\infty|f_j|^s\r\}^\frac1s\r\|_{X^{s/2}}.
\end{equation}
Then $f\in H_X(\rn)$ if and only if $f\in\cs'(\rn)$, $f$ vanishes weakly at infinity and $\|g(f)\|_X<\infty$.
Moreover, for any $f\in H_X(\rn)$,
$$
\|f\|_{H_X(\rn)}\sim\|g(f)\|_X,
$$
where the positive equivalence constants are independent of $f$.
\end{theorem}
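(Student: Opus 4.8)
The plan is to reduce the $g$-function characterization to the already-established Lusin area function characterization (Theorem \ref{Tharea}), so that the only real work is comparing $\|g(f)\|_X$ with $\|S(f)\|_X$ in both directions. For the easy direction, namely $\|S(f)\|_X\lesssim\|g(f)\|_X$, one observes that, for any $x\in\rn$,
\begin{align*}
[S(f)(x)]^2&=\int_{\Gamma(x)}|\varphi_t\ast f(y)|^2\,\frac{dy\,dt}{t^{n+1}}
\le\int_0^\infty\int_{|x-y|<t}|\varphi_t\ast f(y)|^2\,\frac{dy\,dt}{t^{n+1}},
\end{align*}
and after writing $|\varphi_t\ast f(y)|^2\le\{\cm([\varphi_t\ast f]^2)(x)\}$ via the local-mean/Peetre-type maximal estimate (this is where the support condition on $\widehat\psi$ coming from Lemma \ref{Le47} enters, together with the standard argument bounding the averaged square by the Hardy--Littlewood maximal function of $t\mapsto|\varphi_t\ast f|^2$ of some slightly dilated kernel), one integrates in $t$ to obtain $S(f)\lesssim\{\cm([g(f)]^2)\}^{1/2}=\cm^{(1/2)}\cdots$ and then applies the boundedness of $\cm$ on $X^{1/2}$ — which follows from Assumption \ref{as1}, Lemma \ref{Re2.6} and Lemma \ref{Le2.7} — to conclude $\|S(f)\|_X=\|[S(f)]^2\|_{X^{1/2}}^{1/2}\lesssim\|[g(f)]^2\|_{X^{1/2}}^{1/2}=\|g(f)\|_X$.

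For the harder direction, $\|g(f)\|_X\lesssim\|S(f)\|_X$, the intended tool is Lemma \ref{Le67} (the Ullrich-type estimate as improved in \cite{WYYZ}), which bounds $g(f)$ pointwise by a sum (over scales) of area-type integrals of $\varphi_t\ast f$ over wider cones, decaying geometrically in the dilation parameter. Concretely, one expects an estimate of the shape
$$
g(f)(x)\lesssim\sum_{k\in\zz_+}2^{-k\delta}\lf\{\int_0^\infty\int_{|x-y|<2^kt}|\varphi_t\ast f(y)|^2\,\frac{dy\,dt}{t^{n+1}}\r\}^{1/2}
=\sum_{k\in\zz_+}2^{-k\delta}\ca^{(2^k)}(F)(x)
$$
for some $\delta>0$ and $F(x,t):=\varphi_t\ast f(x)$, valid because $\varphi$ has the vanishing-moment/non-degeneracy properties in Definition \ref{de4.2}. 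Then one raises to a power $\nu\in(0,\min\{1,s\}]$ to use the $\nu$-triangle inequality in $X^{1/\nu}$ (legitimate since $X^{1/\nu}$ is a ball Banach function space by Lemma \ref{Re2.6}, as $X^{1/s}$ is), applies the change-of-angles estimate Theorem \ref{pp} to get $\|\ca^{(2^k)}(F)\|_X\lesssim 2^{k\max\{n/2,n/s\}}\|\ca^{(1)}(F)\|_X=2^{k\max\{n/2,n/s\}}\|S(f)\|_X$, and sums the resulting geometric series — this forces $\delta$ to be chosen large enough, which is exactly what the extra hypothesis \eqref{as1-3} (Fefferman--Stein inequality on $X^{s/2}$) is designed to guarantee through the quantitative decay in Lemma \ref{Le67}. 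I expect the technical heart to be verifying the hypotheses under which Theorem \ref{pp} applies here, namely that $X^{1/s}$ is a ball Banach function space and $\cm$ is bounded on $(X^{1/s})'$: these are extracted from Assumption \ref{as2} via Lemma \ref{Le2.9} (to pass from the power maximal bound to boundedness of $\cm$ itself) and Lemma \ref{Re2.6}.

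Finally, once both inequalities $\|g(f)\|_X\sim\|S(f)\|_X$ are in hand, Theorem \ref{Tharea} immediately gives the equivalence: if $f\in H_X(\rn)$, then $f\in\cs'(\rn)$ vanishes weakly at infinity and $\|S(f)\|_X<\infty$, hence $\|g(f)\|_X\sim\|S(f)\|_X\sim\|f\|_{H_X(\rn)}$; conversely, if $f\in\cs'(\rn)$ vanishes weakly at infinity with $\|g(f)\|_X<\infty$, then $\|S(f)\|_X\lesssim\|g(f)\|_X<\infty$, so Theorem \ref{Tharea} yields $f\in H_X(\rn)$ with $\|f\|_{H_X(\rn)}\sim\|S(f)\|_X\lesssim\|g(f)\|_X$; combining with the reverse estimate finishes the proof. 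The main obstacle, as noted, is the bookkeeping in the second direction: getting a clean geometric decay from Lemma \ref{Le67} and matching it against the polynomial growth $2^{k\max\{n/2,n/s\}}$ from Theorem \ref{pp}, which is precisely why the $g$-function characterization needs the stronger assumption \eqref{as1-3} that is not required for the Lusin area and $g_\lambda^\ast$ characterizations.
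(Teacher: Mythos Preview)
Your overall plan of reducing to Theorem \ref{Tharea} and comparing $\|S(f)\|_X$ with $\|g(f)\|_X$ is sound, but you have the two directions reversed in terms of difficulty and tooling, and this leads to a genuine gap.

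The ``easy'' argument you sketch for $\|S(f)\|_X\lesssim\|g(f)\|_X$ does not work. The pointwise claim $|\varphi_t\ast f(y)|^2\le\cm([\varphi_t\ast f]^2)(x)$ is false: the Hardy--Littlewood maximal function controls averages, not pointwise values. What \emph{is} true is $t^{-n}\int_{|x-y|<t}|\varphi_t\ast f(y)|^2\,dy\le\cm(|\varphi_t\ast f|^2)(x)$, but integrating this in $t$ gives $\int_0^\infty\cm(|\varphi_t\ast f|^2)(x)\,\frac{dt}{t}$, which is \emph{not} $\cm([g(f)]^2)(x)$ since $\cm$ does not commute with $\int_0^\infty\,\frac{dt}{t}$. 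This is exactly the direction the paper calls the sufficiency, and it is where Lemma \ref{Le67} is invoked (as a replacement for \cite[Lemma 2.14]{WYY}): one bounds $S(f)$ via the Peetre maximal function, applies Lemma \ref{Le67} with a suitable $\gamma$, and is left with an $\ell^2$-sum of powered maximal functions of $|\varphi_{2^{-m}}\ast f|$. Controlling that sum in $X$ is precisely what the extra hypothesis \eqref{as1-3} provides --- it is equivalent to a Fefferman--Stein inequality for $\cm^{(2\theta/s)}$ with inner $\ell^2$-norm on $X$. So \eqref{as1-3} belongs to \emph{this} direction, not the other one.

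Your proposed route for $\|g(f)\|_X\lesssim\|S(f)\|_X$ via Lemma \ref{Le67} plus Theorem \ref{pp} is plausible and, interestingly, differs from the paper's approach (which, following \cite[Theorem 2.10(i)]{WYY}, handles necessity by atomic decomposition and pointwise estimates of $g$ on atoms). But note that in Lemma \ref{Le67} the decay exponent $N_0$ is a \emph{free} parameter, so your $\delta$ can be made as large as you like; no appeal to \eqref{as1-3} is needed to beat the growth $2^{k\max\{n/2,n/s\}}$ from Theorem \ref{pp}. In short: the direction you call ``hard'' can likely be done your way (and without \eqref{as1-3}), but the direction you call ``easy'' is the one that actually requires both Lemma \ref{Le67} and \eqref{as1-3}, and your current argument for it has a gap.
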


The following pointwise estimate is a slight variant of \cite[(2.66)]{U}, which
is just \cite[Lemma 3.21]{WYYZ}.

\begin{lemma}\label{Le67}
Let $\phi$ be a Schwartz function and assume that, for any $x\in\rn\setminus\{\vec 0_n\}$,
there exists a $j\in \zz$ such that $\widehat\phi(2^{j}x)\not=0$.
Then, for any given $N_0\in\nn$ and $\gamma\in(0,\infty)$,
there exists a positive constant $C_{(N_0,\gamma,\phi)}$, depending
only on $n$, $N_0$, $\gamma$ and $\phi$,
such that,
for any $s\in[1,2]$, $a\in(0,N_0]$, $l\in\zz$,
$f\in\cs'(\rn)$ and $x\in\rn$,
\begin{equation*}
\lf[\lf(\phi_{2^{-l}s}^\ast f\r)_a(x)\r]^\gamma\le C_{(N_0,\gamma,\phi
)}\sum_{k=0}^\infty2^{-kN_0\gamma}2^{(k+l)n}
\int_\rn\frac{|(\phi_{2^{-(k+l)}})_s\ast f(y)|^\gamma}{(1+2^l|x-y|)^{a\gamma}}\,dy.
\end{equation*}
\end{lemma}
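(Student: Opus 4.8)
The plan is to prove this Peetre-type maximal estimate by the classical route: a Calder\'on reproducing formula that refines $\phi_{2^{-l}s}\ast f$ only to \emph{finer} scales, almost-orthogonality bounds for the resulting kernels coming from the vanishing moments of a companion function, and the elementary Jensen/H\"older device that inserts the power $\gamma$ into each convolution integral. First I would fix $l\in\zz$ and $s\in[1,2]$, write $t_0:=2^{-l}s$ so that $1/t_0\sim2^l$ with comparability constants depending only on the range $s\in[1,2]$, and record the identity $(\phi_{2^{-(k+l)}})_s=\phi_{2^{-(k+l)}s}=\phi_{2^{-k}t_0}$. Using the Tauberian condition on $\phi$ together with Lemma \ref{Le47}, I would produce a companion $\psi\in\cs(\rn)$ with $\widehat\psi\in C_c^\infty(\rn)$ supported in an annulus away from $\vec0_n$, so that $\psi$ has vanishing moments of every order, and from it assemble an \emph{inhomogeneous} reproducing formula anchored at the top scale $t_0$, namely a decomposition
\[
\phi_{t_0}\ast f=\sum_{k=0}^\infty K_k\ast\lf(\phi_{2^{-k}t_0}\ast f\r),
\]
in which each kernel $K_k$ is a convolution of $\phi_{t_0}$ with a rescaled companion at scale $2^{-k}t_0=2^{-(k+l)}s$, only these finer scales ($k\ge0$) occur, and all scales coarser than $t_0$ collapse into the single $k=0$ low-pass term. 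Because the top scale is finite, this formula requires no decay of $f$ at infinity and hence holds for every $f\in\cs'(\rn)$.

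Next I would record the almost-orthogonality bound: since each $K_k$ is $\phi_{t_0}$ convolved against a function at the finer scale $2^{-k}t_0$ carrying arbitrarily many vanishing moments, for every prescribed $L$ one has
\[
\lf|K_k(w)\r|\le C_{(N_0,L,\phi)}\,2^{-kN_0}\,\frac{2^{ln}}{(1+2^l|w|)^{L}}\qquad(w\in\rn),
\]
the factor $2^{-kN_0}$ arising from matching $N_0$ moments against the scale ratio $2^{-k}$. Then, for fixed $y$, I would bound $|\phi_{t_0}\ast f(y)|$ by $\sum_k\int_\rn|K_k(y-z)|\,|\phi_{2^{-k}t_0}\ast f(z)|\,dz$ and insert the power $\gamma$ scale by scale: for $\gamma\in(0,1]$ apply Jensen's inequality to the concave map $t\mapsto t^\gamma$ against the measure $|K_k(y-z)|\,dz$, whose total mass is $\sim2^{-kN_0}$, and for $\gamma\in(1,\infty)$ apply H\"older with exponents $\gamma,\gamma'$. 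In both cases the surplus mass combines with the remaining kernel factor to give exactly
\[
\lf[\int_\rn|K_k(y-z)|\,|\phi_{2^{-k}t_0}\ast f(z)|\,dz\r]^\gamma\lesssim 2^{-kN_0\gamma}\int_\rn\frac{2^{ln}\,|\phi_{2^{-k}t_0}\ast f(z)|^\gamma}{(1+2^l|y-z|)^{L}}\,dz.
\]

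Finally I would transfer the $y$-centred weight to the $x$-centred one. Dividing by $(1+2^l|x-y|/s)^{a\gamma}\sim(1+2^l|x-y|)^{a\gamma}$ and using the triangle estimate $(1+2^l|x-y|)^{-a\gamma}\le(1+2^l|y-z|)^{a\gamma}(1+2^l|x-z|)^{-a\gamma}$ moves $a\gamma$ powers onto the integration variable; choosing $L>n+N_0\gamma$ (permissible since $\psi$ is Schwartz) keeps the exponent $L-a\gamma$ of $(1+2^l|y-z|)$ strictly positive, so that factor is $\le1$ and the resulting bound is independent of $y$. Taking the supremum over $y$ then yields
\[
\lf[\lf(\phi_{t_0}^\ast f\r)_a(x)\r]^\gamma\lesssim\sum_{k=0}^\infty2^{-kN_0\gamma}\,2^{ln}\int_\rn\frac{|\phi_{2^{-(k+l)}s}\ast f(z)|^\gamma}{(1+2^l|x-z|)^{a\gamma}}\,dz,
\]
and replacing the harmless smaller factor $2^{ln}$ by $2^{(k+l)n}$ reproduces the asserted inequality with the constant depending only on $n$, $N_0$, $\gamma$ and $\phi$. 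The main obstacle is the first step: producing the genuinely one-sided reproducing formula in which the coarse scales collapse into a single scale-$t_0$ term, and checking that the companion's vanishing moments (and the Schwartz decay) can be taken abundant enough to force both the decay exponent $N_0$ and the auxiliary exponent $L$ as large as the statement demands. Once this and the attendant almost-orthogonality bound are in hand, the Jensen/H\"older step and the weight-transfer step are routine.
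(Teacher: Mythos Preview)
The paper does not supply its own proof of this lemma; it is quoted verbatim from \cite[Lemma~3.21]{WYYZ}, which in turn records it as a slight variant of Ullrich \cite[(2.66)]{U}. Your outline is exactly the Rychkov--Ullrich argument used in those references: a one-sided discrete Calder\'on formula refining $\phi_{t_0}\ast f$ only to the finer scales $2^{-k}t_0$ with $k\ge0$, almost-orthogonality kernel bounds driven by the (arbitrarily many) vanishing moments of the companion $\psi$, the Jensen/H\"older device to insert the power $\gamma$, and the Peetre weight transfer via $(1+2^l|x-z|)\le(1+2^l|x-y|)(1+2^l|y-z|)$. Your identification of the one-sided reproducing formula as the crux is accurate; in the cited sources this is obtained from the Tauberian hypothesis by a compactness argument on the unit sphere, which produces a uniform lower bound on $|\widehat\phi|$ over a fixed annulus and hence allows division. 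Two small points you leave implicit but which the references handle: for $\gamma>1$ the outer sum $(\sum_k\cdots)^\gamma$ needs a H\"older step with geometric weights $2^{-k\varepsilon}$, so in practice one proves the kernel bound with an exponent $N>N_0$ and absorbs the loss; and your invocation of Lemma~\ref{Le47} is really only to borrow the companion $\psi$ with $\widehat\psi\in C_c^\infty$ supported away from the origin, not the continuous formula itself. With those understood, the plan is correct and matches the literature.
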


\begin{proof}[Proof of Theorem \ref{Thgf}]
By an argument similar to that used in the proof of \cite[Theorem 2.10(i)]{WYY},
we obtain the necessity of this theorem.

Conversely, repeating the proof of \cite[Theorem 2.10(i)]{WYY} via replacing
\cite[Lemma 2.14]{WYY} used therein by Lemma \ref{Le67} here,
we complete the proof of the sufficiency
and hence of Theorem \ref{Thgf}.
\end{proof}

\section{Applications\label{s5}}
In this section, we apply Theorems \ref{Tharea}, \ref{Thgx} and \ref{Thgf}, respectively, to five concrete examples
of ball quasi-Banach function spaces, namely, Morrey spaces (see \S\ref{s5.1} below),
mixed-norm Lebesgue spaces (see \S\ref{s5.2} below),
variable Lebesgue spaces (see \S\ref{s5.3} below), weighted Lebesgue spaces (see \S\ref{s5.4} below)
and Orlicz-slice spaces (see \S\ref{s5.5} below). Observe that, among these five examples, only variable Lebesgue spaces are
quasi-Banach function spaces as in Remark \ref{bbf}(ii),
while the other four examples are ball quasi-Banach function spaces, which are not necessary to be
quasi-Banach function spaces.

\subsection{Morrey spaces\label{s5.1}}

Recall that, due to the applications in elliptic partial differential equations,
the Morrey space $M_r^p(\rn)$ with $0<r\le p<\infty$
was introduced by Morrey \cite{Mo} in 1938.
In recent decades,
there exists an increasing interest in applications
of Morrey spaces to various areas of analysis
such as partial differential equations, potential theory and
harmonic analysis (see, for instance, \cite{a15,a04,cf,JW,ysy}).

\begin{definition}
Let $0<r\le p<\infty$.
The \emph{Morrey space $M_r^p(\rn)$} is defined
to be the set of all measurable functions $f$ such that
$$
\|f\|_{M_r^p(\rn)}:=\sup_{B\in\BB}|B|^{1/p-1/r}\|f\|_{L^q(B)}<\infty,
$$
where $\BB$ is as in \eqref{Eqball} (the set of all balls of $\rn$).
\end{definition}

\begin{remark}
Observe that, as was pointed out in \cite[p.\,86]{SHYY}, $M_r^p(\rn)$
may not be a quasi-Banach function space, but it is a ball
quasi-Banach function space as in Definition \ref{Debqfs}.
\end{remark}

Let $0<r\le p<\infty$. From \cite[Theorem 2.4]{st} and \cite[Lemma 2.5]{tx},
it follows that $M_r^p(\rn)$ satisfies Assumption \ref{as1} for any
$\theta,\ s\in(0,\min\{1,r\})$ with $\theta<s$ (see also \cite{cf,H15}).
Applying  \cite[Lemma 5.7]{H15} and \cite[Theorem 4.1]{ST},
we can easily show that Assumption \ref{as2}
holds true for any given $s\in(0,r)$ and $q\in(r,\infty]$, and $X:=M_r^p(\rn)$.
Thus, all the assumptions of main theorems in Section \ref{s4} are satisfied.
Using Theorems
\ref{Tharea}, \ref{Thgf} and \ref{Thgx},
we obtain
the following characterizations of Morrey--Hardy space $HM_r^p(\rn)$, respectively, in terms of the
Lusin area function, the Littlewood--Paley $g$-function and the Littlewood--Paley $g_\lambda^*$-function.

\begin{theorem}\label{Thsm}
Let $p,\ r\in(0,\infty)$ with $r\leq p$.
Then $f\in HM_r^p(\rn)$ if and only if either of the following two items holds true:
\begin{itemize}
\item[{\rm(i)}]$f\in\cs'(\rn)$, $f$ vanishes weakly at infinity and
$S(f)\in M_r^p(\rn)$, where $S(f)$ is as in \eqref{deaf}.
\item[{\rm(ii)}] $f\in\cs'(\rn)$, $f$ vanishes weakly at infinity and
$g(f)\in M_r^p(\rn)$, where $g(f)$ is as in \eqref{degf}.
\end{itemize}
Moreover, for any $f\in HM_r^p(\rn)$,
$$
\|f\|_{HM_r^p(\rn)}\sim\|S(f)\|_{M_r^p(\rn)}\sim\|g(f)\|_{M_r^p(\rn)},
$$
where the positive equivalence constants are independent of $f$.
\end{theorem}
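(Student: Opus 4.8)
The plan is to obtain Theorem~\ref{Thsm} as a direct consequence of the general Littlewood--Paley characterizations of $H_X(\rn)$ proved in Section~\ref{s4}, specialized to the ball quasi-Banach function space $X:=M_r^p(\rn)$. Once the hypotheses of Theorems~\ref{Tharea}, \ref{Thgf} and \ref{Thgx} have been verified for this $X$, items~(i) and~(ii) of Theorem~\ref{Thsm}, together with the claimed norm equivalences, follow immediately upon identifying $H_X(\rn)$ with $HM_r^p(\rn)$. So the whole argument reduces to checking that $M_r^p(\rn)$ satisfies Assumptions~\ref{as1} and~\ref{as2} with a common $s$, and, for the $g$-function statement, the additional vector-valued inequality~\eqref{as1-3}.

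First I would fix $\theta,\,s$ with $0<\theta<s<\min\{1,r\}$ (in particular $s\le1$, as required by Theorems~\ref{Tharea}, \ref{Thgf} and \ref{Thgx}), with $\theta$ taken small enough for the argument below. By \cite[Theorem 2.4]{st} and \cite[Lemma 2.5]{tx} (see also \cite{cf,H15}), the Fefferman--Stein vector-valued maximal inequality~\eqref{as1-1} holds on $M_r^p(\rn)$ for such $(\theta,s)$; this is Assumption~\ref{as1}. For Assumption~\ref{as2}, the key structural fact is that Morrey spaces are stable under convexification: $(M_r^p)^{1/s}=M_{r/s}^{p/s}(\rn)$, which is a ball Banach function space because $s<r$. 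Combining this with the known description of its associate space and the boundedness of the powered Hardy--Littlewood maximal operator $\cm^{((q/s)')}$ on that associate space, as furnished by \cite[Lemma 5.7]{H15} and \cite[Theorem 4.1]{ST}, one obtains Assumption~\ref{as2} for every $q\in(r,\infty]$ and the same $s$. With both assumptions at hand, Theorem~\ref{Tharea} applies and yields item~(i) of Theorem~\ref{Thsm} and the equivalence $\|f\|_{HM_r^p(\rn)}\sim\|S(f)\|_{M_r^p(\rn)}$.

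For item~(ii) I would, in addition, verify~\eqref{as1-3}, i.e.\ the Fefferman--Stein inequality on the convexification $X^{s/2}=(M_r^p)^{s/2}$. Since this is again a Morrey space, shrinking $\theta$ (and $s$) at the outset if necessary, it is once more covered by the Morrey-space vector-valued maximal inequality \cite[Theorem 2.4]{st}, \cite[Lemma 2.5]{tx}, so~\eqref{as1-3} holds. Theorem~\ref{Thgf} then gives the $g$-function characterization together with $\|f\|_{HM_r^p(\rn)}\sim\|g(f)\|_{M_r^p(\rn)}$, and combining the two equivalences completes the proof. (Analogously, Theorem~\ref{Thgx} applied with $r_+=r$ in~\eqref{eqsup} yields the corresponding $g_\lambda^\ast$-function characterization for every $\lambda\in(\max\{1,2/r\},\infty)$.) The only real difficulty is the bookkeeping behind these verifications: keeping track of how convexification transforms the pair of Morrey indices, confirming that each convexification is a genuine ball Banach function space, and identifying its associate space precisely enough to quote the maximal-operator estimates of \cite{H15,ST}; none of it is deep, but this is exactly where the concrete structure of $M_r^p(\rn)$ actually enters.
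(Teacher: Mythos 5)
Your proposal is correct and follows essentially the same route as the paper: verify Assumption \ref{as1} for $M_r^p(\rn)$ via \cite[Theorem 2.4]{st} and \cite[Lemma 2.5]{tx}, verify Assumption \ref{as2} via \cite[Lemma 5.7]{H15} and \cite[Theorem 4.1]{ST}, and then invoke Theorems \ref{Tharea} and \ref{Thgf}. Your explicit treatment of the convexification identity $(M_r^p)^{1/s}=M_{r/s}^{p/s}(\rn)$ and of the extra inequality \eqref{as1-3} on $X^{s/2}$ only spells out details the paper subsumes in its statement that ``all the assumptions of main theorems in Section \ref{s4} are satisfied,'' so there is no substantive difference.
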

\begin{remark}
If $\varphi$ appearing in the definitions of $S(f)$ and $g(f)$ as in \eqref{deaf} and \eqref{degf} satisfies that
$\mathbf1_{B(\vec 0_n,4)\setminus B(\vec0_n,2)}\le\widehat\varphi\le\mathbf1_{B(\vec 0_n,8)\setminus B(\vec0_n,1)}$,
then,
in this case, Theorem \ref{Thoss} was obtained by Sawano et al. \cite[Theorem 3.21]{SHYY}.
\end{remark}

\begin{theorem}\label{ThM}
Let $p,\ r\in(0,\infty)$ with $r\leq p$, and $\lambda\in(\max\{1,2/r\},\infty)$.
Then $f\in HM_r^p(\rn)$ if and only if $f\in\cs'(\rn)$, $f$ vanishes weakly at infinity and $\|g_\lambda^\ast(f)\|_{M_r^p(\rn)}<\infty$.
Moreover, for any $f\in HM_r^p(\rn)$,
$$
\|f\|_{HM_r^p(\rn)}\sim\|g_\lambda^\ast(f)\|_{M_r^p(\rn)},
$$
where the positive equivalence constants are independent of $f$.
\end{theorem}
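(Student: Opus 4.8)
The strategy is to obtain Theorem~\ref{ThM} as an immediate application of Theorem~\ref{Thgx} with the ball quasi-Banach function space chosen to be $X:=M_r^p(\rn)$. Thus the whole argument reduces to verifying that $M_r^p(\rn)$ fulfills all the hypotheses of Theorem~\ref{Thgx} for some common $s\in(0,1]$ and that, for this $X$, the interval $(\max\{1,2/r_+\},\infty)$ occurring in Theorem~\ref{Thgx} contains the interval $(\max\{1,2/r\},\infty)$ required in Theorem~\ref{ThM}.

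To carry this out, I would first recall that $M_r^p(\rn)$ is a ball quasi-Banach function space, as noted right after its definition. Then, relying on the facts already assembled in the discussion immediately preceding Theorem~\ref{Thsm} --- namely, \cite[Theorem 2.4]{st} and \cite[Lemma 2.5]{tx} for the Fefferman--Stein vector-valued maximal inequality and \cite[Lemma 5.7]{H15} together with \cite[Theorem 4.1]{ST} for the boundedness of $\cm$ on the associate space of a suitable convexification --- I would fix $\theta,\,s\in(0,\min\{1,r\})$ with $\theta<s$. For this choice one has $0<\theta<s<1$, so in particular $s\in(0,1]$; moreover $M_r^p(\rn)$ satisfies Assumption~\ref{as1} with these $\theta$ and $s$, and, since $s<r$ implies that $(M_r^p(\rn))^{1/s}=M_{r/s}^{p/s}(\rn)$ is a ball Banach function space, $M_r^p(\rn)$ also satisfies Assumption~\ref{as2} with this same $s$ and some $q\in(r,\infty]$. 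Hence all hypotheses of Theorem~\ref{Thgx} are in force with the common parameter $s\in(0,1]$.

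Finally I would pin down the admissible range of $\lambda$. Since Assumption~\ref{as2} holds for every $s\in(0,r)$ paired with some $q\in(r,\infty)\subset(s,\infty)$, the supremum $r_+$ from \eqref{eqsup} satisfies $r_+\ge r$, whence $(\max\{1,2/r\},\infty)\subset(\max\{1,2/r_+\},\infty)$. Consequently, for every $\lambda\in(\max\{1,2/r\},\infty)$ the hypotheses of Theorem~\ref{Thgx} are met, and that theorem yields at once both the asserted $g_\lambda^\ast$-function characterization of $HM_r^p(\rn)$ and the norm equivalence $\|f\|_{HM_r^p(\rn)}\sim\|g_\lambda^\ast(f)\|_{M_r^p(\rn)}$, with positive equivalence constants independent of $f$. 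I do not anticipate any genuine obstacle; the only point requiring a little care is the bookkeeping of the parameters --- arranging simultaneously that $0<\theta<s\le1$ and $s<r$, so that Assumptions~\ref{as1} and \ref{as2} hold for one and the same $s$ while $(M_r^p(\rn))^{1/s}$ remains a ball Banach function space --- together with checking that the cited maximal-operator estimates are precisely of the forms demanded in Assumptions~\ref{as1} and \ref{as2}.
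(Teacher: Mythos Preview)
Your proposal is correct and follows exactly the same approach as the paper: the paper verifies, in the paragraph preceding Theorem~\ref{Thsm}, that $M_r^p(\rn)$ satisfies Assumptions~\ref{as1} and \ref{as2} via the same cited results you list, and then simply declares that Theorems~\ref{Tharea}, \ref{Thgf} and \ref{Thgx} apply. Your explicit verification that $r_+\ge r$ (so that the $\lambda$-range matches) is a detail the paper leaves implicit, but it is the right check to make.
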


\begin{remark}
If $p\in(0,1]$, $r\in(0,p]$ and $\varphi$ appearing in the definition of $g_\lambda^*(f)$ as in \eqref{d22} satisfies that
$\mathbf1_{B(\vec 0_n,4)\setminus B(\vec0_n,2)}\le\widehat\varphi\le\mathbf1_{B(\vec 0_n,8)\setminus B(\vec0_n,1)}$
then,
in this case, Wang et al. \cite[Corollary 2.22(v)]{WYY} also obtained the same result as in Theorem \ref{ThM}.
When $p\in(1,\infty)$ and $r\in(0,p]$, to the best of our knowledge, the result of Theorem \ref{ThM} is new.
\end{remark}

\subsection{Mixed-norm Lebesgue spaces\label{s5.2}}

The mixed-norm Lebesgue space $L^{\vec{p}}(\rn)$
was studied by Benedek and Panzone \cite{BP} in 1961, which can be traced
back to Hormander \cite{H1}. Later on, in 1970, Lizorkin \cite{l70} further developed both the theory of
multipliers of Fourier integrals and estimates of convolutions
in the mixed-norm Lebesgue spaces.
Particularly,
in order to meet the requirements arising in the study of the boundedness of operators,
partial differential equations and some other analysis fields, the real-variable theory of mixed-norm
function spaces, including mixed-norm Morrey spaces,
mixed-norm Hardy spaces, mixed-norm Besov
spaces and mixed-norm Triebel--Lizorkin spaces, has rapidly been developed
in recent years (see, for instance,
\cite{cgn17bs,gjn17,tn,hy,HLYY,hlyy19}).

\begin{definition}\label{mix}
Let $\vec{p}:=(p_1,\ldots,p_n)\in(0,\infty]^n$.
The \emph{mixed-norm Lebesgue space $L^{\vec{p}}(\rn)$} is defined
to be the set of all measurable functions $f$ such that
$$
\|f\|_{L^{\vec{p}}(\rn)}:=\lf\{\int_{\rr}\cdots\lf[\int_{\rr}|f(x_1,\ldots,x_n)|^{p_1}\,dx_1\r]
^{\frac{p_2}{p_1}}\cdots\,dx_n\r\}^{\frac{1}{p_n}}<\infty
$$
with the usual modifications made when $p_i=\infty$ for some $i\in\{1,\ldots,n\}$.
\end{definition}

In this subsection, for any $\vec{p}:=(p_1,\ldots,p_n)\in(0,\infty)^n$, we always let
$p_-:= \min\{p_1, \ldots , p_n\}$ and  $p_+ := \max\{p_1, \ldots , p_n\}$.

Let $\vec{p}\in(0,\infty)^n$. Then
$L^{\vec{p}}(\rn)$ satisfies
Assumption \ref{as1} for any
$\theta,\ s\in(0,\min\{1,p_-\})$ with $\theta<s$ (see \cite[Lemma 3.7]{HLYY}).
Applying \cite[Lemma 3.5]{HLYY} and \cite[Theorem 1.a]{BP},
we can easily show that Assumption \ref{as2}
holds true for any given $s\in(0,p_-)$ and $q\in(p_+,\infty]$, and $X:=L^{\vec{p}}(\rn)$.
Thus, all the assumptions of main theorems in Sections \ref{s3} and \ref{s4} are satisfied.
Using Theorems
\ref{Tharea}, \ref{Thgf} and \ref{Thgx},
we obtain
the following characterizations of the mixed Hardy space $H^{\vec{p}}(\rn)$, respectively, in terms of the
Lusin area function, the Littlewood--Paley $g$-function and the Littlewood--Paley $g_\lambda^*$-function.

\begin{theorem}\label{Thsmi}
Let $\vec p:=(p_1,\ldots,p_n)\in(0,\infty)^n$.
Then $f\in H^{\vec p}(\rn)$ if and only if
either of the following two items holds true:
\begin{itemize}
\item[{\rm(i)}]$f\in\cs'(\rn)$, $f$ vanishes weakly at infinity and
$S(f)\in L^{\vec p}(\rn)$, where $S(f)$ is as in \eqref{deaf}.
\item[{\rm(ii)}] $f\in\cs'(\rn)$, $f$ vanishes weakly at infinity and
$g(f)\in L^{\vec p}(\rn)$, where $g(f)$ is as in \eqref{degf}.
\end{itemize}
Moreover, for any $f\in H^{\vec p}(\rn)$,
$$
\|f\|_{H^{\vec p}(\rn)}\sim\|S(f)\|_{L^{\vec p}(\rn)}\sim\|g(f)\|_{L^{\vec p}(\rn)},
$$
where the positive equivalence constants are independent of $f$.
\end{theorem}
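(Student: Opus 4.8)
Proof proposal. The plan is to recognize that, by definition, $H^{\vec p}(\rn)=H_X(\rn)$ with $X:=L^{\vec p}(\rn)$, and then to deduce Theorem \ref{Thsmi} by simply applying Theorems \ref{Tharea} and \ref{Thgf}; the whole task therefore reduces to checking that $L^{\vec p}(\rn)$, with $\vec p\in(0,\infty)^n$, satisfies the hypotheses of those two abstract theorems for a suitably chosen pair of exponents $(\theta,s)$.

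First I would record that $L^{\vec p}(\rn)$ is a ball quasi-Banach function space and fix $s\in(0,\min\{1,p_-\})$ together with $\theta\in(0,\min\{s,(s/2)p_-\})$, which is possible since $p_->0$; note $0<\theta<s\le1$. For such $(\theta,s)$, the Fefferman--Stein vector-valued inequality \eqref{as1-1} on $L^{\vec p}(\rn)$ is exactly \cite[Lemma 3.7]{HLYY}, so Assumption \ref{as1} holds. For Assumption \ref{as2}, I would observe via Definition \ref{mix} that $(L^{\vec p})^{1/s}(\rn)=L^{\vec p/s}(\rn)$ with $\vec p/s\in(1,\infty]^n$, hence a ball Banach function space, and then combine \cite[Lemma 3.5]{HLYY} with the classical boundedness of $\cm$ on mixed-norm Lebesgue spaces \cite[Theorem 1.a]{BP} to get that $\cm^{((q/s)')}$ is bounded on $((L^{\vec p})^{1/s})'$ for any $q\in(p_+,\infty]$. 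With Assumptions \ref{as1} and \ref{as2} verified for the same $s\in(0,1]$, Theorem \ref{Tharea} applies and yields item (i) together with $\|f\|_{H^{\vec p}(\rn)}\sim\|S(f)\|_{L^{\vec p}(\rn)}$.

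For the $g$-function characterization I would additionally check the inequality \eqref{as1-3}, i.e. the vector-valued maximal inequality on $X^{s/2}=(L^{\vec p})^{s/2}(\rn)=L^{(s/2)\vec p}(\rn)$, and then invoke Theorem \ref{Thgf} to obtain item (ii) and $\|f\|_{H^{\vec p}(\rn)}\sim\|g(f)\|_{L^{\vec p}(\rn)}$; combining the two equivalences gives the displayed chain. The point that needs care --- the main (minor) obstacle --- is precisely \eqref{as1-3}: since $(s/2)\vec p\in(0,\infty)^n$ has smallest component $(s/2)p_->\theta$, a rescaling reduces \eqref{as1-3} to the classical Fefferman--Stein vector-valued maximal inequality on mixed-norm Lebesgue spaces (again along the lines of \cite[Lemma 3.7]{HLYY}), which is valid thanks to the choice $\theta<(s/2)p_-$; the only real work is bookkeeping, namely producing a single pair $(\theta,s)$ with $0<\theta<s\le1$ that simultaneously makes Assumption \ref{as1} hold on $L^{\vec p}(\rn)$, is compatible with Assumption \ref{as2} for some $q\in(p_+,\infty]$, and yields \eqref{as1-3} on the convexification $L^{(s/2)\vec p}(\rn)$. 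Once this is in place, everything follows from the results of Section \ref{s4}, and the argument, with obvious modifications, is the same as the one already used for Morrey spaces in \S\ref{s5.1}.
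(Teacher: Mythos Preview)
Your proposal is correct and follows essentially the same route as the paper: verify that $L^{\vec p}(\rn)$ satisfies Assumptions \ref{as1} and \ref{as2} via \cite[Lemma 3.7]{HLYY}, \cite[Lemma 3.5]{HLYY} and \cite[Theorem 1.a]{BP}, then invoke Theorems \ref{Tharea} and \ref{Thgf}. You are in fact slightly more careful than the paper in explicitly isolating the extra hypothesis \eqref{as1-3} of Theorem \ref{Thgf} and recording the constraint $\theta<(s/2)p_-$ needed for it, whereas the paper simply asserts that all the assumptions of Section \ref{s4} are met.
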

\begin{remark}\label{Re5.16}
Let $\vec p:=(p_1,\ldots,p_n)\in(0,\infty)^n$. If $\varphi$ appearing in
the definitions of $S(f)$ and $g(f)$ as in \eqref{deaf} and \eqref{degf} satisfies that
$\varphi\in\cs(\rn)$ is a radial function such that, for any multi-index
$\alpha\in\zz_+^n$ with $|\alpha|\leq\lfloor n(\frac1{\min\{p_1,\ldots,p_n\}}-1)\rfloor$,
$\int_\rn x^\alpha\varphi(x)\,dx=0$ and, for any $\xi\in\rn\setminus\{\vec0_n\}$,
$\sum_{k\in\zz}|\widehat\varphi(2^k\xi)|^2=1$, then, in this case,
Theorem \ref{Thsmi} was obtained by Huang et al. \cite[Theorems 4.1 and 4.2]{HLYY}
as a special case.
\end{remark}

\begin{theorem}\label{ThMix}
Let $\vec p:=(p_1,\ldots,p_n)\in(0,\infty)^n$. Let $\lambda\in(\max\{1,\frac{2}{\min\{p_1,\ldots,p_n\}}\},\infty)$.
Then $f\in H^{\vec p}(\rn)$ if and only if $f\in\cs'(\rn)$,
$f$ vanishes weakly at infinity and $\|g_\lambda^\ast(f)\|_{L^{\vec p}(\rn)}<\infty$.
Moreover, for any $f\in H^{\vec p}(\rn)$,
$$
\|f\|_{H^{\vec p}(\rn)}\sim\|g_\lambda^\ast(f)\|_{L^{\vec p}(\rn)},
$$
where the positive equivalence constants are independent of $f$.
\end{theorem}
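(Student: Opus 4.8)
The plan is to obtain Theorem \ref{ThMix} as a direct specialization of the general Littlewood--Paley $g_\lambda^\ast$-function characterization in Theorem \ref{Thgx} to the ball quasi-Banach function space $X:=L^{\vec p}(\rn)$, noting that $H_X(\rn)=H^{\vec p}(\rn)$ by definition and that the function $\varphi$ entering $g_\lambda^\ast$ here satisfies exactly the weak hypotheses imposed in Definition \ref{de4.1}.

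First I would check that $X:=L^{\vec p}(\rn)$ fulfils the hypotheses of Theorem \ref{Thgx}. By \cite[Lemma 3.7]{HLYY}, $L^{\vec p}(\rn)$ satisfies Assumption \ref{as1} for any $\theta,\,s\in(0,\min\{1,p_-\})$ with $\theta<s$, and, by \cite[Lemma 3.5]{HLYY} combined with \cite[Theorem 1.a]{BP}, it satisfies Assumption \ref{as2} for any $s\in(0,p_-)$ and $q\in(p_+,\infty]$. Hence, fixing any $s\in(0,\min\{1,p_-\})$ and any $\theta\in(0,s)$, the space $X$ satisfies Assumptions \ref{as1} and \ref{as2} with the same $s\in(0,1]$, so all the hypotheses of Theorem \ref{Thgx} hold.

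Next I would estimate the exponent $r_+$ in \eqref{eqsup} from below. Since Assumption \ref{as2} holds for every $s\in(0,p_-)$ together with some \emph{finite} $q\in(s,\infty)$ (take $q\in(p_+,\infty)$), the definition of $r_+$ gives $r_+\ge p_-=\min\{p_1,\ldots,p_n\}$. Therefore $\max\{1,2/r_+\}\le\max\{1,2/p_-\}$, and the interval $(\max\{1,\frac{2}{\min\{p_1,\ldots,p_n\}}\},\infty)$ assumed in Theorem \ref{ThMix} is contained in $(\max\{1,2/r_+\},\infty)$. Consequently, for any $\lambda$ in the range of Theorem \ref{ThMix}, Theorem \ref{Thgx} applies verbatim with $X=L^{\vec p}(\rn)$ and yields both the asserted equivalence of membership and the norm equivalence $\|f\|_{H^{\vec p}(\rn)}\sim\|g_\lambda^\ast(f)\|_{L^{\vec p}(\rn)}$.

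Since this is purely a matter of instantiating a general theorem, no genuine obstacle arises; the only steps needing (routine) care are confirming that a single $s$ can be chosen to serve both Assumptions \ref{as1} and \ref{as2}, and that the lower bound $r_+\ge p_-$ indeed accommodates the full stated range of $\lambda$ --- both of which are immediate from the cited boundedness and maximal-inequality results for $L^{\vec p}(\rn)$.
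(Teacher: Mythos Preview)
Your proposal is correct and follows essentially the same approach as the paper: verify that $L^{\vec p}(\rn)$ satisfies Assumptions \ref{as1} and \ref{as2} via the same cited results (\cite[Lemmas 3.5 and 3.7]{HLYY}, \cite[Theorem 1.a]{BP}), deduce $r_+\ge p_-$, and then apply Theorem \ref{Thgx}. Your explicit check that a single $s$ works for both assumptions and that the range of $\lambda$ is absorbed via $r_+\ge p_-$ is a welcome bit of care that the paper leaves implicit.
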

\begin{remark}
Let $\vec p:=(p_1,\ldots,p_n)\in(0,\infty)^n$. If $\varphi$
appearing in the definition of $g_\lambda^*(f)$ as in \eqref{d22} is as in Remark \ref{Re5.16}, then,
in this case, Theorem \ref{ThMix} widens the range of
$\lambda\in(1+\frac{2}{\min\{2,p_1,\ldots,p_n\}},\infty)$ in \cite[Theorem 4.3]{HLYY}
into $\lambda\in(\max\{1,\frac{2}{\min\{p_1,\ldots,p_n\}}\},\infty)$.
\end{remark}
\subsection{Variable Lebesgue spaces\label{s5.3}}

Let $p(\cdot):\ \rn\to[0,\infty)$ be a measurable function. Then the \emph{variable Lebesgue space
$L^{p(\cdot)}(\rn)$} is defined to be the set of all measurable functions $f$ on $\rn$ such that
$$
\|f\|_{L^{p(\cdot)}(\rn)}:=\inf\lf\{\lambda\in(0,\infty):\ \int_\rn[|f(x)|/\lambda]^{p(x)}\,dx\le1\r\}<\infty.
$$
We refer the reader to \cite{N1,N2,KR,CUF,DHR} for more details on variable Lebesgue spaces.

For any measurable function $p(\cdot):\ \rn\to(0,\infty)$, in this subsection, we let
$$
 \widetilde{p_-}:=\underset{x\in\rn}{\essinf}\,p(x)\quad\text{and}\quad
 \widetilde p_+:=\underset{x\in\rn}{\esssup}\,p(x).
$$
If $0<\widetilde p_-\le \widetilde p_+<\infty$, then, similarly to the proof of \cite[Theorem 3.2.13]{DHHR},
we know that $L^{p(\cdot)}(\rn)$ is
a quasi-Banach function space and hence a ball quasi-Banach function space.

A measurable function $p(\cdot):\ \rn\to(0,\infty)$ is said to be \emph{globally
log-H\"older continuous} if there exists a $p_{\infty}\in\rr$ such that, for any
$x,\ y\in\rn$,
$$
|p(x)-p(y)|\lesssim\frac{1}{\log(e+1/|x-y|)}
$$
and
$$
|p(x)-p_\infty|\lesssim\frac{1}{\log(e+|x|)},
$$
where the positive equivalence constants are independent of $x$ and $y$.

Let $p(\cdot):\ \rn\to(0,\infty)$ be a globally
log-H\"older continuous function satisfying
$0<\widetilde p_-\le \widetilde p_+<\infty$.
Adamowicz et al. \cite{ahh} obtained the
boundedness of the Hardy--Littlewood maximal operator on variable Lebesgue spaces.
Using this and \cite[Theorem 3.1]{AJ}, we can readily prove that, for any
$\theta,\ s\in(0,\min\{1,\widetilde p_-\})$ with $\theta<s$, Assumption \ref{as1} is satisfied
(see also \cite{CUF,CUW}). Furthermore, from \cite[Lemma 2.16]{CUF},
we deduce that
Assumption \ref{as2} holds true for any given $s\in(0,\widetilde p_-)$ and
$q\in(\widetilde p_+,\infty]$,
and $X:=L^{p(\cdot)}(\rn)$.
Thus, all the assumptions of main theorems in Section \ref{s4} are satisfied.
Using Theorems
\ref{Tharea}, \ref{Thgf} and \ref{Thgx},
we obtain the following characterizations of variable Hardy space $H^{p(\cdot)}(\rn)$, respectively, in terms of the
Lusin area function, the Littlewood--Paley $g$-function and the Littlewood--Paley $g_\lambda^*$-function.

\begin{theorem}\label{Thsv}
Let $p(\cdot):\ \rn\to(0,\infty)$ be a globally
log-H\"older continuous function satisfying
$0<\widetilde p_-\leq \widetilde p_+<\infty$.
Then $f\in H^{\vec p}(\rn)$ if and only if
either of the following two items holds true:
\begin{itemize}
\item[{\rm(i)}]$f\in\cs'(\rn)$, $f$ vanishes weakly at infinity and
$S(f)\in L^{p(\cdot)}(\rn)$, where $S(f)$ is as in \eqref{deaf}.
\item[{\rm(ii)}] $f\in\cs'(\rn)$, $f$ vanishes weakly at infinity and
$g(f)\in L^{p(\cdot)}(\rn)$, where $g(f)$ is as in \eqref{degf}.
\end{itemize}
Moreover, for any $f\in H^{p(\cdot)}(\rn)$,
$$
\|f\|_{H^{p(\cdot)}(\rn)}\sim\|S(f)\|_{L^{p(\cdot)}(\rn)}\sim\|g(f)\|_{L^{p(\cdot)}(\rn)},
$$
where the positive equivalence constants are independent of $f$.
\end{theorem}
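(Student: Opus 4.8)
The plan is to obtain Theorem \ref{Thsv} by specializing the general results of Section \ref{s4} to the ball quasi-Banach function space $X:=L^{p(\cdot)}(\rn)$: item (i) will follow from Theorem \ref{Tharea} and item (ii) from Theorem \ref{Thgf}, once parameters $\theta,\,s,\,q$ have been chosen for which all of their hypotheses hold simultaneously.

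First I would record, exactly as explained in the paragraph preceding the theorem, that under the stated hypotheses (that is, $p(\cdot)$ globally log-H\"older continuous and $0<\widetilde{p_-}\le\widetilde{p_+}<\infty$) the space $L^{p(\cdot)}(\rn)$ is a ball quasi-Banach function space which satisfies Assumption \ref{as1} for every $\theta,\,s\in(0,\min\{1,\widetilde{p_-}\})$ with $\theta<s$, and Assumption \ref{as2} for every $s\in(0,\widetilde{p_-})$ and $q\in(\widetilde{p_+},\infty]$. Fixing any $s\in(0,\min\{1,\widetilde{p_-}\})$, any $\theta\in(0,s)$ and any $q\in(\widetilde{p_+},\infty]$, Assumptions \ref{as1} and \ref{as2} then hold with this common $s\le1$, so Theorem \ref{Tharea} applies verbatim and yields item (i) together with the equivalence $\|f\|_{H^{p(\cdot)}(\rn)}\sim\|S(f)\|_{L^{p(\cdot)}(\rn)}$.

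For item (ii) I would invoke Theorem \ref{Thgf}, which in addition requires the Fefferman--Stein inequality \eqref{as1-3} on the convexification $X^{s/2}$. The key observation is that, for $X=L^{p(\cdot)}(\rn)$, Definition \ref{Debf} together with the elementary homogeneity identity $\|\,|h|^{a}\,\|_{L^{q(\cdot)}(\rn)}=\|h\|_{L^{aq(\cdot)}(\rn)}^{a}$ (valid for any $a\in(0,\infty)$ and any measurable exponent $q(\cdot)$) identifies $X^{s/2}$ isometrically with the variable Lebesgue space $L^{(s/2)p(\cdot)}(\rn)$. Using this, writing $g_j:=|f_j|^{\theta}$ and $r:=s/\theta\in(1,\infty)$, and recalling $\cm^{(\theta)}(f_j)=[\cm(g_j)]^{1/\theta}$, a short computation with the same homogeneity shows that \eqref{as1-3} is equivalent to
$$
\lf\|\lf\{\sum_{j=1}^{\infty}\lf[\cm(g_j)\r]^{r}\r\}^{1/r}\r\|_{L^{(r/2)p(\cdot)}(\rn)}
\le C\lf\|\lf\{\sum_{j=1}^{\infty}g_j^{r}\r\}^{1/r}\r\|_{L^{(r/2)p(\cdot)}(\rn)},
$$
namely the classical vector-valued Fefferman--Stein maximal inequality, with inner exponent $r\in(1,\infty)$, on the variable Lebesgue space with exponent $(r/2)p(\cdot)$. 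Since $(r/2)p(\cdot)$ is again globally log-H\"older and has essential infimum $(r/2)\widetilde{p_-}$, this inequality is available from the known theory of variable Lebesgue spaces (see, for instance, \cite{CUF}) as soon as $(r/2)\widetilde{p_-}>1$, i.e.\ as soon as $\theta<s\widetilde{p_-}/2$. Hence I would simply choose $\theta\in(0,s)$ small enough that moreover $\theta<s\widetilde{p_-}/2$, so that \eqref{as1-3} holds alongside Assumptions \ref{as1} and \ref{as2}; Theorem \ref{Thgf} then delivers item (ii) together with $\|f\|_{H^{p(\cdot)}(\rn)}\sim\|g(f)\|_{L^{p(\cdot)}(\rn)}$.

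The argument is largely bookkeeping, and the only genuinely nontrivial point I expect is the verification of \eqref{as1-3}: because $X^{s/2}$ carries the rescaled exponent $(s/2)p(\cdot)$, whose essential infimum may well be strictly less than $1$, one cannot directly quote a vector-valued maximal inequality on $X^{s/2}$ in the standard form, and the homogeneity identity above is precisely the device needed to transfer the estimate to a genuine variable Lebesgue space on which the classical inequality is known; the choice of $\theta$ small is what makes that target exponent have infimum exceeding $1$. Once the admissible range of $\theta$ is fixed accordingly, no further difficulty arises.
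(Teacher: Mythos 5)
Your proposal is correct and follows essentially the same route as the paper: one verifies that $L^{p(\cdot)}(\rn)$ satisfies Assumptions \ref{as1} and \ref{as2} (via the boundedness of $\cm$ on variable Lebesgue spaces and the known weighted/vector-valued results) and then quotes Theorems \ref{Tharea} and \ref{Thgf}. Your explicit rescaling argument identifying $X^{s/2}$ with $L^{(s/2)p(\cdot)}(\rn)$ and reducing \eqref{as1-3} to the classical Fefferman--Stein inequality on $L^{(r/2)p(\cdot)}(\rn)$ with $r=s/\theta$, after shrinking $\theta$ so that $(r/2)\widetilde{p_-}>1$, is a correct filling-in of a verification the paper leaves implicit.
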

\begin{remark}\label{Rev1}
If $\varphi$ appearing in the definitions of $S(f)$ and $g(f)$ as in \eqref{deaf} and \eqref{degf} satisfies that
$\mathbf1_{B(\vec 0_n,4)\setminus B(\vec0_n,2)}\le\widehat\varphi\le\mathbf1_{B(\vec 0_n,8)\setminus B(\vec0_n,1)}$,
then,
in this case, Theorem \ref{Thsv} was obtained by \cite[Theorem 3.21]{SHYY} and
\cite[Corollary 2.22(vi)]{WYY}.
\end{remark}

\begin{theorem}\label{Thv}
Let $p(\cdot):\ \rn\to(0,\infty)$ be a globally
log-H\"older continuous function satisfying
$0<\widetilde p_-\leq \widetilde p_+<\infty$. Let $\lambda\in(\max\{1,\frac{2}{\widetilde p_-}\},\infty)$.
Then $f\in H^{p(\cdot)}(\rn)$ if and only if $f\in\cs'(\rn)$, $f$ vanishes weakly at infinity and $\|g_\lambda^\ast(f)\|_{L^{p(\cdot)}(\rn)}<\infty$.
Moreover, for any $f\in H^{p(\cdot)}(\rn)$,
$$
\|f\|_{H^{p(\cdot)}(\rn)}\sim\|g_\lambda^\ast(f)\|_{L^{p(\cdot)}(\rn)},
$$
where the positive equivalence constants are independent of $f$.
\end{theorem}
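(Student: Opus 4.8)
\textit{Proof proposal.} The plan is to obtain Theorem \ref{Thv} as a direct application of the abstract Littlewood--Paley $g_\lambda^\ast$-function characterization in Theorem \ref{Thgx}, specialized to the ball quasi-Banach function space $X:=L^{p(\cdot)}(\rn)$. Thus the entire argument reduces to two bookkeeping tasks: first, checking that this concrete $X$ satisfies Assumptions \ref{as1} and \ref{as2} with a common $s\in(0,1]$; and second, checking that the parameter $r_+$ defined in \eqref{eqsup} satisfies $r_+\ge\widetilde{p}_-$, so that the interval $(\max\{1,2/\widetilde{p}_-\},\infty)$ appearing in the statement is contained in the admissible range $(\max\{1,2/r_+\},\infty)$ furnished by Theorem \ref{Thgx}. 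Since $f$ vanishing weakly at infinity is part of the hypotheses in both directions, nothing else needs to be addressed.

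First I would recall, as noted just above the statement, that under the global log-H\"older continuity of $p(\cdot)$ together with $0<\widetilde{p}_-\le\widetilde{p}_+<\infty$ the space $L^{p(\cdot)}(\rn)$ is a quasi-Banach function space, hence a ball quasi-Banach function space. Next, using the boundedness of $\cm$ on $L^{p(\cdot)}(\rn)$ due to Adamowicz et al. \cite{ahh} together with \cite[Theorem 3.1]{AJ}, Assumption \ref{as1} holds for every pair $\theta,\,s\in(0,\min\{1,\widetilde{p}_-\})$ with $\theta<s$; and, by \cite[Lemma 2.16]{CUF}, Assumption \ref{as2} holds for every $s\in(0,\widetilde{p}_-)$ and every $q\in(\widetilde{p}_+,\infty]$. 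Fixing, for instance, $s\in(0,\min\{1,\widetilde{p}_-\})$, then $\theta\in(0,s)$ and a finite $q\in(\widetilde{p}_+,\infty)$, we see that $X$ satisfies Assumptions \ref{as1} and \ref{as2} with the same $s\in(0,1]$, exactly as required by Theorem \ref{Thgx}.

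It remains to locate $r_+$. For every $s\in(0,\widetilde{p}_-)$, Assumption \ref{as2} holds with some finite $q\in(s,\infty)$ (take any $q\in(\widetilde{p}_+,\infty)$), so the supremum in \eqref{eqsup} is at least $\widetilde{p}_-$; consequently $\max\{1,2/r_+\}\le\max\{1,2/\widetilde{p}_-\}$, and $(\max\{1,2/\widetilde{p}_-\},\infty)$ lies inside the range allowed in Theorem \ref{Thgx}. Applying Theorem \ref{Thgx} to $X=L^{p(\cdot)}(\rn)$ then yields, for any $\lambda\in(\max\{1,2/\widetilde{p}_-\},\infty)$, that $f\in H^{p(\cdot)}(\rn)$ if and only if $f\in\cs'(\rn)$ vanishes weakly at infinity and $\|g_\lambda^\ast(f)\|_{L^{p(\cdot)}(\rn)}<\infty$, together with the asserted norm equivalence; this is precisely Theorem \ref{Thv}. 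Since every analytic ingredient---the Fefferman--Stein vector-valued maximal inequality on $X$ and the maximal-operator bound on the associate space $(X^{1/s})'$---is already supplied by the cited literature, there is no serious analytic obstacle here; the only point deserving a little care is the identification of $r_+$, namely verifying that restricting to finite exponents $q$ in \eqref{eqsup} still produces the claimed range of $\lambda$.
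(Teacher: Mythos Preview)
Your proposal is correct and follows essentially the same approach as the paper: the paper does not give a separate proof of Theorem \ref{Thv} but simply records, in the paragraph preceding it, that $L^{p(\cdot)}(\rn)$ satisfies Assumptions \ref{as1} and \ref{as2} for the stated ranges of $\theta,s,q$ and then invokes Theorem \ref{Thgx}. Your additional explicit verification that $r_+\ge\widetilde p_-$ (hence $(\max\{1,2/\widetilde p_-\},\infty)\subset(\max\{1,2/r_+\},\infty)$) is a welcome clarification that the paper leaves implicit.
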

\begin{remark}
Let $p(\cdot):\ \rn\to(0,\infty)$ be a globally
log-H\"older continuous function satisfying
$0<\widetilde p_-\leq \widetilde p_+<\infty$. Let
$\lambda\in(\max\{1,\frac{2}{\widetilde p_-}\},\infty)$.
If $\varphi$ appearing in the definition of $g_\lambda^*(f)$ as in \eqref{d22} satisfies that
$\mathbf1_{B(\vec 0_n,4)\setminus B(\vec0_n,2)}\le\widehat\varphi\le\mathbf1_{B(\vec 0_n,8)\setminus B(\vec0_n,1)}$,
then,
in this case, Theorem \ref{Thos} widens the range of
$\lambda\in(\max\{\frac{2}{\min\{1,\widetilde p_-\}},1-\frac{2}
{\max\{1,\widetilde p_+\}}+\frac{2}{\min\{1,\widetilde p_-\}}\},\infty)$ in \cite[Corollary 2.22(vi)]{WYY}
into $\lambda\in(\max\{1,\frac{2}{\widetilde p_-}\},\infty)$.
\end{remark}

\subsection{Weighted Lebesgue spaces\label{s5.4}}

If $X$ is a weighted Lebesgue space $L_\omega^p(\rn)$ with $p\in(0,\infty)$ and
$\omega\in A_\infty(\rn)$, then $H_X(\rn)$ is just the
weighted Hardy space $H_\omega^p(\rn)$, which was studied in \cite{Bui,GC2,LHY,ccyy,STo,yy11,YLK}.

It is worth pointing out that a weighted Lebesgue space with an $A_\infty(\rn)$-weight may
not be a Banach function space; see \cite[Section 7.1]{SHYY}.
From \cite[Theorem 2.7.4]{DHHR},
we deduce that, when $p\in(1,\infty)$ and $\omega\in A_\infty(\rn)$,
$[L_\omega^p(\rn)]'=L^{p'}_{\omega^{1-p'}}(\rn)$, where $[L_\omega^p(\rn)]'$ is the associated space of $L_\omega^p(\rn)$
as in \eqref{asso} with $X:=L^p_\omega(\rn)$. For any given $\omega\in A_\infty(\rn)$, let
\begin{equation}\label{eq-qw}
q_\omega:=\inf\lf\{q\in[1,\infty):\ \omega\in A_q(\rn)\r\}.
\end{equation}
Let $p\in(0,\infty)$ and $\omega\in A_\infty(\rn)$. Obviously, $\omega\in A_{p/\theta}(\rn)$ holds true for any $\theta\in(0,p/q_\omega)$.
From this and \cite[Theorem 3.1(b)]{AJ}, we deduce that $L^p_\omega(\rn)$ satisfies Assumption \ref{as1}
for any $\theta,\ s\in(0,\min\{1,p/q_\omega\})$ with $\theta<s$.
Moreover, by \cite[Proposition 7.1.5(4)]{G1},
we know that, for any $s\in(0,p/q_\omega)$, $\omega^{1-(p/s)'}\in A_{(p/s)'}(\rn)$
holds true. By this and \cite[Corollary 7.2.6]{G1}, we conclude that there exists
a $q\in(0,\infty)$ such that $\omega^{1-(p/s)'}\in A_{(p/s)'/( q/s)'}(\rn)$.
Then, from \cite[Theorem 7.1.9(b)]{G1} and the fact that
$L_{\omega^{1-(p/s)'}}^{(p/s)'/(q/s)'}(\rn)=[L_{\omega^{1-(p/s)'}}^{(p/s)'}(\rn)]^{1/(q/s)'}$,
we deduce that $\cm^{(q/s)'}$ is bounded on $L_{\omega^{1-(p/s)'}}^{(p/s)'}(\rn)$,
which shows that, for any given $s\in(0,p/q_\omega)$, there exists a
$q\in(0,\infty)$ such that Assumption \ref{as2} holds true.
Thus, all the assumptions of main theorems in Section \ref{s4} are satisfied.
Using Theorems
\ref{Tharea}, \ref{Thgf} and \ref{Thgx},
we immediately obtain the following characterizations of $H_\omega^p(\rn)$ by means of the
Lusin area function, the Littlewood--Paley $g$-function and
the Littlewood--Paley $g_\lambda^*$-function.

\begin{theorem}\label{Thwos}
Let $p\in(0,\infty)$ and $\omega\in A_\infty(\rn)$.
Then $f\in H^p_\omega(\rn)$ if and only if
either of the following two items holds true:
\begin{itemize}
\item[{\rm(i)}]$f\in\cs'(\rn)$, $f$ vanishes weakly at infinity and
$S(f)\in L^p_\omega(\rn)$, where $S(f)$ is as in \eqref{deaf}.
\item[{\rm(ii)}] $f\in\cs'(\rn)$, $f$ vanishes weakly at infinity and
$g(f)\in L^p_\omega(\rn)$, where $g(f)$ is as in \eqref{degf}.
\end{itemize}
Moreover, for any $f\in H^p_\omega(\rn)$,
$$
\|f\|_{H^p_\omega(\rn)}\sim\|S(f)\|_{L^p_\omega(\rn)}\sim\|g(f)\|_{L^p_\omega(\rn)},
$$
where the positive equivalence constants are independent of $f$.
\end{theorem}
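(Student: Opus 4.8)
The plan is to obtain Theorem \ref{Thwos} as a direct specialization of the general Littlewood--Paley characterizations of $H_X(\rn)$ in Section \ref{s4} to the choice $X:=L_\omega^p(\rn)$; once $X$ is known to satisfy the relevant structural hypotheses, the two equivalences $\|f\|_{H_\omega^p(\rn)}\sim\|S(f)\|_{L_\omega^p(\rn)}$ and $\|f\|_{H_\omega^p(\rn)}\sim\|g(f)\|_{L_\omega^p(\rn)}$, for $f\in\cs'(\rn)$ vanishing weakly at infinity, are exactly the content of Theorems \ref{Tharea} and \ref{Thgf} (and, for the $g_\lambda^\ast$-variant, Theorem \ref{Thgx}). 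So the real work is the verification of the hypotheses, which I would organize along the lines already indicated in the discussion preceding the theorem.

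First I would record that $L_\omega^p(\rn)$ with $p\in(0,\infty)$ and $\omega\in A_\infty(\rn)$ is a ball quasi-Banach function space and that $q_\omega$ in \eqref{eq-qw} is finite, so $\omega\in A_q(\rn)$ for all $q\in(q_\omega,\infty)$. Then, fixing $\theta,\ s$ with $0<\theta<s<\min\{1,p/q_\omega\}$, Assumption \ref{as1} is the weighted Fefferman--Stein vector-valued maximal inequality: after the substitution $f_j\mapsto|f_j|^\theta$ it reduces to the inequality for $\cm$ with exponents $s/\theta>1$ and $p/\theta$, which is valid since $\omega\in A_{p/\theta}(\rn)$, by \cite[Theorem 3.1(b)]{AJ}. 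For Assumption \ref{as2}, since $s<p/q_\omega\le p$, the convexification $X^{1/s}=L_\omega^{p/s}(\rn)$ is a ball Banach function space; its associate space is $(X^{1/s})'=L_{\omega^{1-(p/s)'}}^{(p/s)'}(\rn)$ by \cite[Theorem 2.7.4]{DHHR}, and $\omega^{1-(p/s)'}\in A_{(p/s)'}(\rn)$ by \cite[Proposition 7.1.5(4)]{G1}. The openness of the Muckenhoupt classes (\cite[Corollary 7.2.6]{G1}) then produces $q\in(s,\infty)$ with $\omega^{1-(p/s)'}\in A_{(p/s)'/(q/s)'}(\rn)$, and \cite[Theorem 7.1.9(b)]{G1}, together with $L_{\omega^{1-(p/s)'}}^{(p/s)'/(q/s)'}(\rn)=[(X^{1/s})']^{1/(q/s)'}$, gives the boundedness of $\cm^{((q/s)')}$ on $(X^{1/s})'$, which is \eqref{as1-2}. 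The extra hypothesis \eqref{as1-3} needed for Theorem \ref{Thgf} I would check by the same argument applied on $X^{s/2}=L_\omega^{sp/2}(\rn)$: it is again a weighted Fefferman--Stein inequality for the pair $(\theta,s)$ on a weighted Lebesgue space, hence holds as soon as $\omega\in A_{sp/(2\theta)}(\rn)$, i.e. $\theta<sp/(2q_\omega)$, which can be arranged together with $0<\theta<s<\min\{1,p/q_\omega\}$ since $q_\omega\ge1$.

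The only subtle point is this bookkeeping of the parameters $\theta$, $s$, $q$: one must choose $s$ close enough to $\min\{1,p/q_\omega\}$ and $\theta$ small enough that all of $\omega$ and $\omega^{1-(p/s)'}$ simultaneously lie in the Muckenhoupt classes demanded by Assumptions \ref{as1} and \ref{as2} and by \eqref{as1-3}; I expect this to be the main, though entirely elementary, obstacle. With such $\theta,\ s,\ q$ fixed, Theorems \ref{Tharea}, \ref{Thgx} and \ref{Thgf} apply verbatim to $X=L_\omega^p(\rn)$ and yield the claimed characterizations together with the norm equivalences $\|f\|_{H_\omega^p(\rn)}\sim\|S(f)\|_{L_\omega^p(\rn)}\sim\|g(f)\|_{L_\omega^p(\rn)}$, which completes the proof.
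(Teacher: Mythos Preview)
Your proposal is correct and follows essentially the same route as the paper: verify that $X:=L_\omega^p(\rn)$ satisfies Assumptions \ref{as1} and \ref{as2} (using \cite[Theorem 3.1(b)]{AJ} for the vector-valued maximal inequality and \cite[Proposition 7.1.5(4), Corollary 7.2.6, Theorem 7.1.9(b)]{G1} for the associate-space boundedness), then invoke Theorems \ref{Tharea}, \ref{Thgx} and \ref{Thgf}. You are in fact slightly more explicit than the paper, which simply asserts that ``all the assumptions of main theorems in Section \ref{s4} are satisfied'' without separately checking \eqref{as1-3}; your observation that \eqref{as1-3} on $X^{s/2}=L_\omega^{sp/2}(\rn)$ reduces to requiring $\omega\in A_{sp/(2\theta)}(\rn)$, hence $\theta<sp/(2q_\omega)$, and that this is compatible with $0<\theta<s<\min\{1,p/q_\omega\}$, fills that small gap.
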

\begin{remark}\label{Re5.40}
Let $p\in(0,1]$ and $\omega\in A_\infty(\rn)$. Assume that $\varphi\in\cs(\rn)$
appearing in the definitions of $S(f)$ and $g(f)$ as in \eqref{deaf} and \eqref{degf}
is a radial function supported in the unit ball $B(\vec0_n,1)$ satisfying that, for any multi-index
$\alpha\in\zz_+^n$ with $|\alpha|\leq\lfloor n({q_\omega}/{p}-1)\rfloor$,
$\int_\rn x^\alpha\varphi(x)\,dx=0$ and, for any $\xi\in\rn\setminus\{\vec 0_n\}$,
$\int_0^\infty|\widehat\varphi(t\xi)|^2\,\frac{dt}{t}=1$.
In this case, Thereom \ref{Thwos} was obtained by Liang et al. \cite[Theorem 4.13]{HYY}
as a special case.
\end{remark}

\begin{theorem}\label{Thwo}
Let $p\in(0,\infty)$, $\omega\in A_\infty(\rn)$ and
$\lambda\in(\max\{1,2q_\omega/{p}\},\infty)$, where $q_\omega$ is as in \eqref{eq-qw}.
Then $f\in H^p_\omega(\rn)$ if and only if $f\in\cs'(\rn)$,
$f$ vanishes weakly at infinity and $\|g_\lambda^*(f)\|_{L^p_\omega(\rn)}<\infty$.
Moreover, for any $f\in H^p_\omega(\rn)$,
$$
\|f\|_{H^p_\omega(\rn)}\sim\|g_\lambda^\ast(f)\|_{L^p_\omega(\rn)},
$$
where the positive equivalence constants are independent of $f$.
\end{theorem}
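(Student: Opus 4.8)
The plan is to obtain Theorem \ref{Thwo} as a direct specialization of the abstract Littlewood--Paley $g_\lambda^\ast$-function characterization, Theorem \ref{Thgx}, to the ball quasi-Banach function space $X:=L_\omega^p(\rn)$. The first step is to check that this $X$ fulfils the hypotheses of Theorem \ref{Thgx}, i.e.\ that it satisfies Assumptions \ref{as1} and \ref{as2} with a common $s\in(0,1]$. By the weighted-norm facts recalled just before the statement (built on \cite[Proposition 7.1.5, Corollary 7.2.6 and Theorem 7.1.9]{G1} together with \cite[Theorem 3.1]{AJ}), $L_\omega^p(\rn)$ satisfies Assumption \ref{as1} for all $\theta,\,s\in(0,\min\{1,p/q_\omega\})$ with $\theta<s$, and, for every $s\in(0,p/q_\omega)$, there is a $q\in(s,\infty)$ for which Assumption \ref{as2} holds with this pair $(s,q)$. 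Hence one may simply fix any $s\in(0,\min\{1,p/q_\omega\})$, together with a corresponding $\theta\in(0,s)$, and both assumptions hold for this $s$; in particular $s\in(0,1]$.

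The second step is to match the range of $\lambda$. By the definition \eqref{eqsup} of $r_+$ and the previous paragraph, $r_+\ge p/q_\omega$ for $X:=L_\omega^p(\rn)$, so $\max\{1,2/r_+\}\le\max\{1,2q_\omega/p\}$. Consequently the hypothesis $\lambda\in(\max\{1,2q_\omega/p\},\infty)$ of Theorem \ref{Thwo} implies $\lambda\in(\max\{1,2/r_+\},\infty)$, which is exactly the restriction on $\lambda$ demanded by Theorem \ref{Thgx}.

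With these two verifications in hand, Theorem \ref{Thgx} applied to $X:=L_\omega^p(\rn)$ immediately gives that $f\in H_\omega^p(\rn)$ if and only if $f\in\cs'(\rn)$, $f$ vanishes weakly at infinity and $\|g_\lambda^\ast(f)\|_{L_\omega^p(\rn)}<\infty$, together with the equivalence $\|f\|_{H_\omega^p(\rn)}\sim\|g_\lambda^\ast(f)\|_{L_\omega^p(\rn)}$, which is precisely the assertion of Theorem \ref{Thwo}. There is essentially no obstacle here beyond bookkeeping: the only point that needs a little care is the inequality $r_+\ge p/q_\omega$, and this is already contained in the weighted maximal-function estimates quoted above, so no new harmonic analysis is required. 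Everything else is a verbatim specialization of the abstract theorem.
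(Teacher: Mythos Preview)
Your proposal is correct and follows exactly the approach of the paper: Theorem \ref{Thwo} is stated as an immediate consequence of Theorem \ref{Thgx} once the verifications of Assumptions \ref{as1} and \ref{as2} for $X:=L_\omega^p(\rn)$ (carried out in the paragraph preceding Theorem \ref{Thwos}) are in place. Your observation that these verifications yield $r_+\ge p/q_\omega$, and hence that $\lambda\in(\max\{1,2q_\omega/p\},\infty)$ forces $\lambda\in(\max\{1,2/r_+\},\infty)$, is precisely the bookkeeping the paper leaves implicit.
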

\begin{remark}
Let $p\in(0,1]$ and $\varphi$ appearing in the definition of $g_\lambda^*(f)$ in \eqref{d22} be as in
Remark \ref{Re5.40}. In this case, Thereom \ref{Thwo} was obtained by Liang et al. \cite[Theorem 4.8]{LHY}
as a special case.
\end{remark}

\subsection{Orlicz-slice spaces\label{s5.5}}

First, we recall the notions of both Orlicz functions and Orlicz spaces (see, for instance, \cite{RR}).

\begin{definition}\label{or}
A function $\Phi:\ [0,\infty)\ \to\ [0,\infty)$ is called an \emph{Orlicz function} if it is
non-decreasing and satisfies $\Phi(0)= 0$, $\Phi(t)>0$ whenever $t\in(0,\infty)$, and $\lim_{t\to\infty}\Phi(t)=\infty$.
\end{definition}

An Orlicz function $\Phi$ as in Definition \ref{or} is said to be
of \emph{lower} (resp., \emph{upper}) \emph{type} $p$ with
$p\in(-\infty,\infty)$ if
there exists a positive constant $C_{(p)}$, depending on $p$, such that, for any $t\in[0,\infty)$
and $s\in(0,1)$ [resp., $s\in [1,\infty)$],
\begin{equation*}
\Phi(st)\le C_{(p)}s^p \Phi(t).
\end{equation*}
A function $\Phi:\ [0,\infty)\ \to\ [0,\infty)$ is said to be of
 \emph{positive lower} (resp., \emph{upper}) \emph{type} if it is of lower
 (resp., upper) type $p$ for some $p\in(0,\infty)$.
\begin{definition}\label{fine}
Let $\Phi$ be an Orlicz function with positive lower type $p_{\Phi}^-$ and positive upper type $p_{\Phi}^+$.
The \emph{Orlicz space $L^\Phi(\rn)$} is defined
to be the set of all measurable functions $f$ such that
 $$\|f\|_{L^\Phi(\rn)}:=\inf\lf\{\lambda\in(0,\infty):\ \int_{\rn}\Phi\lf(\frac{|f(x)|}{\lambda}\r)\,dx\le1\r\}<\infty.$$
\end{definition}
Now we recall the notion of Orlicz-slice spaces.
\begin{definition}\label{so}
Let $t,\ r\in(0,\infty)$ and $\Phi$ be an Orlicz function with positive lower type $p_{\Phi}^-$ and
positive upper type $p_{\Phi}^+$. The \emph{Orlicz-slice space} $(E_\Phi^r)_t(\rn)$
is defined to be the set of all measurable functions $f$
such that
$$
\|f\|_{(E_\Phi^r)_t(\rn)}
:=\lf\{\int_{\rn}\lf[\frac{\|f\mathbf{1}_{B(x,t)}\|_{L^\Phi(\rn)}}
{\|\mathbf{1}_{B(x,t)}\|_{L^\Phi(\rn)}}\r]^r\,dx\r\}^{\frac{1}{r}}<\infty.
$$
\end{definition}

\begin{remark}
By \cite[Lemma 2.28]{ZYYW}, we know that the Orlicz-slice space $(E_\Phi^r)_t(\rn)$
is a ball quasi-Banach function space, but it may not be a quasi-Banach function space
(see, for instance, \cite[Remark 7.41(i)]{ZWYY})
\end{remark}

The Orlicz-slice space
was introduced by Zhang et al. \cite{ZYYW}, which is a generalization of the slice spaces
proposed by Auscher and Mourgoglou \cite{AM2014} and  Auscher and  Prisuelos-Arribas \cite{APA}.
Let $t,\ r\in(0,\infty)$ and $\Phi$ be an Orlicz function with positive lower type $p_{\Phi}^-$ and
positive upper type $p_{\Phi}^+$.
Then $(E_\Phi^r)_t(\rn)$ satisfies Assumption \ref{as1} for any
$\theta,\ s\in(0,\min\{1,r,p_{\Phi}^-\})$ with $\theta<s$
(see \cite[Lemma 4.3]{ZYYW}). Furthermore, from \cite[Lemmas 4.4]{ZYYW},
we deduce that
Assumption \ref{as2} holds true for any given $s\in(0,\min\{r,p_{\Phi}^-\})$ and $q\in(\max\{r,p_{\Phi}^+\},\infty)$,
and $X:=(E_\Phi^r)_t(\rn)$.
Thus, all the assumptions of main theorems in Sections \ref{s3} and \ref{s4} are satisfied.
Using Theorems
\ref{Tharea}, \ref{Thgf} and \ref{Thgx},
we immediately obtain the following characterizations of $(HE_\Phi^r)_t(\rn)$ in terms of the
Lusin area function, the Littlewood--Paley $g$-function and
the Littlewood--Paley $g_\lambda^*$-function.

\begin{theorem}\label{Thoss}
Let $t\in(0,\infty)$, $r,\ p_{\Phi}^-,\ p_{\Phi}^+\in(0,\infty)$ and $\Phi$ be an Orlicz function
with positive lower type $p_{\Phi}^-$ and positive upper type $p_{\Phi}^+$.
Then $f\in (HE_\Phi^r)_t(\rn)$ if and only if
either of the following two items holds true:
\begin{itemize}
\item[{\rm(i)}]$f\in\cs'(\rn)$, $f$ vanishes weakly at infinity and
$S(f)\in (E_\Phi^r)_t(\rn)$, where $S(f)$ is as in \eqref{deaf}.
\item[{\rm(ii)}] $f\in\cs'(\rn)$, $f$ vanishes weakly at infinity and
$g(f)\in (E_\Phi^r)_t(\rn)$, where $g(f)$ is as in \eqref{degf}.
\end{itemize}
Moreover, for any $f\in (HE_\Phi^q)_t(\rn)$,
$$
\|f\|_{(HE_\Phi^r)_t(\rn)}\sim\|S(f)\|_{(E_\Phi^r)_t(\rn)}\sim\|g(f)\|_{(E_\Phi^r)_t(\rn)},
$$
where the positive equivalence constants are independent of $f$.
\end{theorem}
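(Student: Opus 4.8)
The plan is to deduce both characterizations by verifying that $X:=(E_\Phi^r)_t(\rn)$ satisfies the hypotheses of the abstract Theorems \ref{Tharea} and \ref{Thgf} and then quoting those theorems; recall that, by definition, $(HE_\Phi^r)_t(\rn)=H_X(\rn)$.

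First I would fix $s\in(0,\min\{1,r,p_{\Phi}^-\})$ and then choose $\theta\in(0,\frac{s}{2}\min\{1,r,p_{\Phi}^-\})$, so that $0<\theta<s\le1$ and, moreover, $2\theta/s\in(0,\min\{1,r,p_{\Phi}^-\})$. As recorded just above the statement, \cite[Lemma 4.3]{ZYYW} shows that $X$ satisfies Assumption \ref{as1} for these $\theta$ and $s$, and \cite[Lemma 4.4]{ZYYW} that $X$ satisfies Assumption \ref{as2} for this $s$ and any $q\in(\max\{r,p_{\Phi}^+\},\infty)$. Hence all hypotheses of Theorem \ref{Tharea} are met, which immediately yields item (i) together with the equivalence $\|f\|_{(HE_\Phi^r)_t(\rn)}\sim\|S(f)\|_{(E_\Phi^r)_t(\rn)}$.

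For item (ii), Theorem \ref{Thgf} additionally requires the vector-valued inequality \eqref{as1-3} on $X^{s/2}$. I would obtain it by transporting it onto $X$ itself: raising both sides of \eqref{as1-3} to the power $s/2$, recalling that $X^{s/2}=\{g\in\mathscr M(\rn):\ |g|^{s/2}\in X\}$, and using the elementary identities $[\cm^{(\theta)}(f_j)]^s=\cm^{(\theta/s)}(|f_j|^s)$ and $\cm^{(\theta/s)}(g^2)=[\cm^{(2\theta/s)}(g)]^2$, one checks that \eqref{as1-3} is equivalent to the Fefferman--Stein $\ell^2$-inequality
$$\lf\|\lf\{\sum_{j=1}^\infty\lf[\cm^{(2\theta/s)}(g_j)\r]^2\r\}^{1/2}\r\|_{X}\ls\lf\|\lf\{\sum_{j=1}^\infty|g_j|^2\r\}^{1/2}\r\|_{X}$$
for $\cm^{(2\theta/s)}$ on $X=(E_\Phi^r)_t(\rn)$. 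Since $2\theta/s<\min\{1,r,p_{\Phi}^-\}$, this is precisely a Fefferman--Stein inequality of the kind established for Orlicz-slice spaces in \cite{ZYYW} (the argument there yields the $\ell^u$-inequality for $\cm^{(\vartheta)}$ on $(E_\Phi^r)_t(\rn)$ whenever $\vartheta\in(0,\min\{1,r,p_{\Phi}^-\})$ and $u\in(\vartheta,\infty)$, here with $u=2$). Thus \eqref{as1-3} holds, Theorem \ref{Thgf} applies, and item (ii) together with $\|f\|_{(HE_\Phi^r)_t(\rn)}\sim\|g(f)\|_{(E_\Phi^r)_t(\rn)}$ follows.

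The one step that needs care is the last one: one must pin down a single pair $(\theta,s)$ that is simultaneously admissible for Assumptions \ref{as1} and \ref{as2} and that keeps the exponent $2\theta/s$ below the threshold $\min\{1,r,p_{\Phi}^-\}$ fixed by the lower type of $\Phi$ and by $r$, so that the $\ell^2$-version of the Fefferman--Stein inequality remains available (a naive identification of $X^{s/2}$ with the Orlicz-slice space $(E_{\Phi_{s/2}}^{rs/2})_t(\rn)$, $\Phi_{s/2}(u):=\Phi(u^{s/2})$, shifts the relevant thresholds and is therefore less convenient). Once the parameters are chosen as above, everything reduces to direct quotations of Theorems \ref{Tharea} and \ref{Thgf}, and combining the two displayed equivalences completes the proof.
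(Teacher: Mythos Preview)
Your proof is correct and follows exactly the paper's approach: the paper simply records, in the paragraph preceding the theorem, that \cite[Lemma 4.3]{ZYYW} and \cite[Lemma 4.4]{ZYYW} supply Assumptions \ref{as1} and \ref{as2} for $X=(E_\Phi^r)_t(\rn)$, declares that ``all the assumptions of main theorems in Sections \ref{s3} and \ref{s4} are satisfied,'' and then quotes Theorems \ref{Tharea} and \ref{Thgf}. Your only addition is to unpack the verification of the extra hypothesis \eqref{as1-3} of Theorem \ref{Thgf} --- which the paper leaves implicit --- by the change of variables $g_j=|f_j|^{s/2}$ reducing it to an $\ell^2$ Fefferman--Stein inequality for $\cm^{(2\theta/s)}$ on $X$; this is a genuine (and welcome) clarification rather than a different route.
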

\begin{remark}
Let $\varphi$ appearing in the definitions of $S(f)$ and $g(f)$ as in \eqref{deaf} and \eqref{degf} satisfy that
$\mathbf1_{B(\vec 0_n,4)\setminus B(\vec0_n,2)}\le\widehat\varphi\le\mathbf1_{B(\vec 0_n,8)\setminus B(\vec0_n,1)}$.
In this case, Theorem \ref{Thoss} was obtained by Zhang et al. \cite[Theorem 3.17]{ZYYW}.
\end{remark}

\begin{theorem}\label{Thos}
Let $t\in(0,\infty)$, $r,\ p_{\Phi}^-,\ p_{\Phi}^+\in(0,\infty)$ and $\Phi$ be an Orlicz function
with positive lower type $p_{\Phi}^-$ and positive upper type $p_{\Phi}^+$.
Let $\lambda\in(\max\{1,\frac{2}{\min\{p_\Phi^-,r\}}\},\infty)$.
Then $f\in (HE_\Phi^r)_t(\rn)$ if and only if $f\in\cs'(\rn)$, $f$ vanishes weakly at infinity
and $\|g_\lambda^*(f)\|_{(E_\Phi^r)_t(\rn)}<\infty$, where $g_\lambda^*(f)$ is as in \eqref{d22}.
Moreover, for any $f\in (HE_\Phi^q)_t(\rn)$,
$$
\|f\|_{(HE_\Phi^r)_t(\rn)}\sim\|g_\lambda^\ast(f)\|_{(E_\Phi^r)_t(\rn)},
$$
where the positive equivalence constants are independent of $f$.
\end{theorem}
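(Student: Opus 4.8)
The plan is to obtain Theorem \ref{Thos} as a direct consequence of the general Littlewood--Paley $g_\lambda^\ast$-function characterization established in Theorem \ref{Thgx}, specialized to the ball quasi-Banach function space $X:=(E_\Phi^r)_t(\rn)$. Thus the only task is to check that this concrete $X$ fulfils all the hypotheses of Theorem \ref{Thgx} and that the admissible range of $\lambda$ there reduces to the one claimed here.

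First I would recall, from \cite[Lemma 2.28]{ZYYW}, that $(E_\Phi^r)_t(\rn)$ is a ball quasi-Banach function space. Next, as already noted in this subsection (see \cite[Lemma 4.3]{ZYYW}), for any $\theta,\ s\in(0,\min\{1,r,p_{\Phi}^-\})$ with $\theta<s$ the space $(E_\Phi^r)_t(\rn)$ satisfies Assumption \ref{as1}; I would fix one such pair $\theta,\ s$ and observe that automatically $s\in(0,1]$. Then, by \cite[Lemma 4.4]{ZYYW}, Assumption \ref{as2} holds with this same $s$ and any $q\in(\max\{r,p_{\Phi}^+\},\infty)$. Hence $X:=(E_\Phi^r)_t(\rn)$ satisfies all the hypotheses of Theorem \ref{Thgx}.

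It then remains to identify the parameter $r_+$ from \eqref{eqsup}. Since, by the cited results, Assumption \ref{as2} holds for every $s\in(0,\min\{r,p_{\Phi}^-\})$ together with a suitable $q\in(s,\infty)$, we have $r_+\ge\min\{r,p_{\Phi}^-\}$, and therefore
$$
\max\lf\{1,\frac{2}{r_+}\r\}\le\max\lf\{1,\frac{2}{\min\{p_{\Phi}^-,r\}}\r\}.
$$
Consequently, any $\lambda\in(\max\{1,2/\min\{p_{\Phi}^-,r\}\},\infty)$ lies in $(\max\{1,2/r_+\},\infty)$, so Theorem \ref{Thgx} applies with this $X$ and this $\lambda$, giving both the asserted characterization of $(HE_\Phi^r)_t(\rn)$ and the norm equivalence $\|f\|_{(HE_\Phi^r)_t(\rn)}\sim\|g_\lambda^\ast(f)\|_{(E_\Phi^r)_t(\rn)}$. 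I do not anticipate any genuine obstacle: the substantive analytic work has already been absorbed into Theorem \ref{Thgx} (and, through it, into Theorems \ref{Tharea} and \ref{pp}) and into the maximal-operator estimates for Orlicz-slice spaces in \cite{ZYYW}; the only mild point worth stating explicitly is that Theorem \ref{Thgx} requires $s\le1$, which is secured by the choice $s<\min\{1,r,p_{\Phi}^-\}$.
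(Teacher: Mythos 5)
Your proposal is correct and follows essentially the same route as the paper: the paper likewise verifies Assumptions \ref{as1} and \ref{as2} for $X:=(E_\Phi^r)_t(\rn)$ via \cite[Lemmas 4.3 and 4.4]{ZYYW} and then deduces Theorem \ref{Thos} directly from Theorem \ref{Thgx}, with the observation that $r_+\ge\min\{p_\Phi^-,r\}$ making the stated range of $\lambda$ admissible. Your explicit remark that $s\le 1$ is ensured by choosing $s<\min\{1,r,p_\Phi^-\}$ is a correct and welcome detail.
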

\begin{remark}
Let $r,\ p_{\Phi}^-,\ p_{\Phi}^+\in(0,\infty)$ and $\Phi$ be an Orlicz function
with positive lower type $p_{\Phi}^-$ and positive upper type $p_{\Phi}^+$.
Let $\varphi$ appearing in the definition of $g_\lambda^*(f)$ as in \eqref{d22} satisfy that
$$\mathbf1_{B(\vec 0_n,4)\setminus B(\vec0_n,2)}\le\widehat\varphi\le\mathbf1_{B(\vec 0_n,8)\setminus B(\vec0_n,1)}.
$$
In this case, Theorem \ref{Thos} widens the range of
$\lambda\in(\max\{\frac{2}{\min\{1,p_\Phi^-,r\}},1-\frac{2}{\max\{1,p_\Phi^+,r\}}+
\frac{2}{\min\{1,p_\Phi^-,r\}}\},\infty)$ in \cite[Theorem 3.19]{ZYYW}
into $\lambda\in(\max\{1,\frac{2}{\min\{p_\Phi^-,r\}}\},\infty)$.
\end{remark}



\bigskip

\noindent Der-Chen Chang

\medskip

\noindent Department of Mathematics,
Georgetown University, Washington D. C. 20057, USA

\noindent Graduate Institute of Business Administration, College of Management,
Fu Jen Catholic University, New Taipei City 242, Taiwan, ROC

\smallskip

\noindent{\it E-mail:} \texttt{chang@georgetown.edu}

\bigskip

\noindent Songbai Wang

\smallskip

\noindent College of Mathematics and Statistics, Hubei Normal University,
Huangshi 435002, People's Republic of China

\smallskip

\noindent{\it E-mail}: \texttt{haiyansongbai@163.com}

\bigskip

\noindent Dachun Yang (Corresponding author) and Yangyang Zhang (Corresponding author)

\smallskip

\noindent  Laboratory of Mathematics and Complex Systems
(Ministry of Education of China),
School of Mathematical Sciences, Beijing Normal University,
Beijing 100875, People's Republic of China

\smallskip

\noindent {\it E-mails}: \texttt{dcyang@bnu.edu.cn} (D. Yang)

\noindent\phantom{{\it E-mails:}} \texttt{yangyzhang@mail.bnu.edu.cn} (Y. Zhang)

\end{document}